\newcommand{\C}{\ensuremath{\mathbb{C}}}
\newcommand{\R}{\ensuremath{\mathbb{R}}}
\newcommand{\defined}{\ensuremath{\smash[t]{\overset{\scriptscriptstyle{def}}{=}}}}
\newcommand{\bd}{\ensuremath{\partial}}
\newcommand{\disc}{\ensuremath{\mathbb{D}}}
\newcommand{\D}{\ensuremath{\mathbb{D}}}
\newcommand{\eps}{\ensuremath{\varepsilon}}
\renewcommand{\H}{\ensuremath{\mathcal{H}}}
\newcommand{\GN}{\ensuremath{\mathcal{G}_0}}
\newcommand{\G}{\ensuremath{\mathcal{G}}}
\renewcommand{\c}{\ensuremath{\mathcal{C}}}
\newcommand{\F}{\ensuremath{\mathcal{F}}}
\theoremstyle{plain}
\newtheorem{theorem}{Theorem}
\newtheorem{lemma}{Lemma}
\newtheorem{definition}{Definition}
\newtheorem{proposition}{Proposition}
\newtheorem{corollary}{Corollary}
\newtheorem{question}{Question}
\newtheorem*{thmdgo}{Theorem \bf{DGO}}
\title[Envelopes of holomorphy]{Envelopes of holomorphy and holomorphic discs\\}
\author[B. J\"oricke]{Burglind J\"oricke}
\address{Max-Planck-Institut f\"ur Mathematik\\ P.O.Box: 7280\\ 53072 Bonn\\ Germany}
\email{joericke@mpim-bonn.mpg.de} \subjclass[2000]{32A40;
32E35;53D10} \keywords{envelopes of holomorphy, continuity
principle, holomorphic discs, Riemann surfaces, Stein filling,
planar trees}
\begin{document}

\begin{abstract} The envelope of holomorphy
of an arbitrary domain in a two-dimensional Stein manifold is
identified with a connected component of the set of equivalence
classes of analytic discs immersed into the Stein manifold with
boundary in the domain. This implies, in particular, that for each
of its points the envelope of holomorphy contains an embedded
(non-singular) Riemann surface (and also an immersed analytic disc)
passing through this point with boundary contained in the natural
embedding of the original domain into its envelope of holomorphy.
Moreover, it says, that analytic continuation to a neighbourhood of
an arbitrary point of the envelope of holomorphy can be performed by
applying the continuity principle once. Another corollary concerns
representation of certain elements of the fundamental group of the
domain by boundaries of analytic discs. A particular case is the
following. Given a contact three-manifold with Stein filling, any
element of the fundamental group of the contact manifold whose
representatives are contractible in the filling can be represented
by the boundary of an immersed analytic disc.

\end{abstract}

\maketitle

\vspace{1cm}

\centerline  {0. Introduction}

\vspace{1cm}

The notion of the envelope of holomorphy of domains in $\C ^n$ (or,
more generally, in Stein manifolds) is as classical as the notion of
pseudoconvex domains. Nevertheless, basic questions about envelopes
of holomorphy are open. For instance, not much is known in general
about the number of sheets of the envelope of holomorphy. It is not
clear in general when the envelope of holomorphy is single-sheeted
or at least (say smoothly) equivalent to a domain in the same Stein
manifold (see e.g. \cite{Sto}) .

One of the most interesting problems in this respect is to
understand invariants of the envelope of holomorphy in terms of
invariants of the original domain. It is known that the first Betti
number of the envelope of holomorphy does not exceed that of the
original domain \cite{Ke}. Moreover, it is proved in \cite{Ke} that
the natural homomorphism between fundamental groups is surjective.
In the same vein the natural map between first Cech cohomologies is
injective \cite{Roy}, but in the general situation not too much is
known beyond these results. Naive hopes are not justified (see e.g.
the paper \cite{FoZa}.)

The problem of understanding invariants of the envelope of
holomorphy in terms of invariants of the domain is even interesting
in the following particular case. The domain is a suitable one-sided
neighbourhood of the boundary of a strictly pseudoconvex domain a
Stein manifold (for instance, it equals the set $\{-\varepsilon <
\rho < 0 \}$ for a strictly plurisubharmonic defining function and a
small positive constant $\varepsilon$) and the envelope of
holomorphy is the domain itself. This case reduces to understanding
the topology of the Stein fillings of a contact manifold in terms of
the topology of the contact manifold and is well-known to symplectic
geometers. Despite recent progress and breakthroughs many problems
remain open. For instance, there are examples of contact
three-manifolds that have a Stein filling with second Betti number
strictly exceeding that of the three-manifold and an estimate of the
second Betti number of Stein fillings of a given contact
three-manifold is not known in general. For a contemporary account
see \cite{OSt}.

The general problem motivates the search for a geometric description
of the envelope of holomorphy. It is well-known that any domain in a
Stein manifold has an envelope of holomorphy. Several constructions
are known (see e.g. \cite{H}, \cite{Rossi}, \cite{Ma} ). It is not
obvious how to obtain from these constructions geometric information
about the envelope of holomorphy.

We give here a new description of the envelope of holomorphy of a
domain in a Stein manifold in terms of equivalence classes of
analytic discs. This description, in particular, implies that
analytic continuation to a neighbourhood of each point in the
envelope of holomorphy can be performed by applying the continuity
principle once along a family of immersed analytic discs (see below
for details).

The approach has further geometric consequences which were not known
before. To mention only one of them, for each of its points the
envelope of holomorphy contains an embedded (non-singular) Riemann
surface (and also an immersed analytic disc) passing through this
point with boundary contained in the natural embedding of the
original domain into its envelope of holomorphy. This is in contrast
to what is known for polynomial hulls.

In the paper we focus on the case of Stein manifolds of dimension
$2$, which is in several aspects the most interesting case. We
believe that the main results are true in higher dimensions. It
seems they are true even in more general situations and we intend to
work this out later.

\section{Statement of results}

\noindent Denote by $X^2$ a Stein surface, i.e. a two-dimensional
Stein manifold. Let $G\subset X^2$ be a domain. For the description
of the envelope of holomorphy we use analytic discs immersed into
$X^2$ with boundary in $G$. More precisely, we need the following
definition.

\begin{definition}\label{Definition 1} Consider a holomorphic
immersion from a neigbourhood of the closed unit disc $\overline{\D}
\subset \C$ into $X^2$. The restriction $d:\overline{\disc}
\looparrowright X^2$ is an analytic disc.

If the boundary $d(\bd {\disc} )$ of the disc is contained in $G$ we
will call the disc a $G$-disc. The set of $G$-discs is denoted by
$\mathcal{G}$.
\end{definition}

Fix a metric on $X^2$. For this we fix a proper holomorphic
embedding $\mathfrak{F}: X^2: \rightarrow \C^n$ into Euclidean space
$C^n$ of suitable dimension $n$ and pull back the metric induced on
$\mathfrak{F} X^2$ by $\C^n$. (By \cite{EGr} one can always take
$n=4$.) Having in mind this metric on $X^2$ we will usually endow
the set $\G$ of $G$-discs with the topology of $C^1$-convergence on
the closed disc $\overline \D$.

When dealing with an individual $G$-disc we usually consider the
generic case when its boundary is embedded. The following definition
selects those $G$-discs which participate in the continuity
principle.

\begin{definition}\label{Definition 2}
A $G$-disc $d$ is $G$-homotopic to a constant, or for short $d$ is a
$G_0$-disc, if there is a continuous family of $G$-discs joining $d$
to a constant disc. The set of $G_0$-discs is denoted by
$\mathcal{G}_0$.
\end{definition}

More detailed, the existence of the $G$-homotopy  means, that there
is a continuous mapping $F(t,z),\;\; t\in I=[0,1]$, $z$ in a
neighbourhood of $\overline {\disc}$, such that for each $t\in
(0,1]$ the mapping $z \rightarrow F(t,z)$ is a $G$-disc, moreover,
$F(1,z)=d(z)$ and the mapping $z \rightarrow F(0,z)$ maps the disc
to a point which is then automatically contained in $G$.

Notice that the existence of a $G$-homotopy to a constant is
equivalent to the existence of a $G$-homotopy to an analytic disc
which is embedded into $G$ and whose image has small diameter. In
other words, $\GN$ is the connected component of $\G$ that contains
small analytic discs embedded into $G$.

For convenience, in the sequel we will frequently use two ways of
notation for a continuous map $\mathcal{A}$ defined on a subset of
$\R \times \C$, namely $\mathcal{A}(t,z)=\mathcal{A}_t(z) $.

The reason to consider $G_0$-discs is the following lemma which can
be considered as continuity principle applied to $G$.

\begin{lemma}\label{Lemma 1}
Any $G_0$-disc  $d:\overline{\disc} \rightarrow X^2$ can be lifted
to a (uniquely defined) immersion $\tilde{d}$,
$\tilde{d}:\overline{\disc} \looparrowright \tilde{G}$ into the
envelope of holomorphy $\tilde{G}$ of $G$, such that $\mathcal{P}
\circ \tilde{d} =
d$ and $\tilde{d}(\bd{\disc}) \subset i(G)$.\\
\end{lemma}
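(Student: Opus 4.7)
The plan is to lift the $G$-homotopy $F:[0,1]\times\overline{\D}\to X^2$ provided by the definition of $G_0$-disc (with $F_1=d$ and $F_0\equiv x_0\in G$) to a continuous family of lifts $\tilde F_t:\overline{\D}\to\tilde G$ satisfying $\mathcal{P}\circ\tilde F_t=F_t$ and $\tilde F_t(\bd\D)\subset i(G)$, and then set $\tilde d:=\tilde F_1$. This is carried out by a standard monodromy argument in the parameter $t$: let $T\subset[0,1]$ be the set of $\tau$ for which such a continuous family of lifts exists on $[0,\tau]$ and begins at $\tilde F_0=i(x_0)$. Then $0\in T$, and it suffices to show that $T$ is open and closed in $[0,1]$.

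For openness, I would use that $\mathcal{P}:\tilde G\to X^2$ is a local biholomorphism and $\tilde F_\tau(\overline{\D})$ is compact. Covering this image by finitely many open sets $\tilde U_j\subset\tilde G$ on which $\mathcal{P}|_{\tilde U_j}$ maps biholomorphically onto $U_j\subset X^2$, I define $\tilde F_s$ for $s$ near $\tau$ by pasting the local inverses $(\mathcal{P}|_{\tilde U_j})^{-1}\circ F_s$; $C^1$-continuity of $F_s$ in $s$ guarantees these agree on overlaps by the uniqueness of continuous lifts along connected sets. The condition $\tilde F_s(\bd\D)\subset i(G)$ persists because $i(G)$ is open in $\tilde G$, and for $s>0$ the lift is automatically a holomorphic immersion since $F_s$ is one and $\mathcal{P}$ is a local biholomorphism.

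The main obstacle is closedness, and this is precisely where the continuity principle is essential. Suppose $\tau_n\in T$ with $\tau_n\to\tau$. The boundaries $\tilde F_{\tau_n}(\bd\D)=i\circ F_{\tau_n}(\bd\D)$ lie in the compact set $i(F([0,1]\times\bd\D))\subset i(G)\subset\tilde G$. The envelope of holomorphy $\tilde G$ is, by its universal property, a domain of holomorphy over $X^2$, hence pseudoconvex, so the continuity principle forces the full images $\tilde F_{\tau_n}(\overline{\D})$ to remain in a fixed compact subset of $\tilde G$. Montel's theorem then extracts a subsequence converging in $C^1$ to a holomorphic map $\tilde F_\tau:\overline{\D}\to\tilde G$ with $\mathcal{P}\circ\tilde F_\tau=F_\tau$ and boundary in $i(G)$, and necessarily an immersion (as $F_\tau$ is for $\tau>0$). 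Any two subsequential limits must coincide on $\bd\D$ (both equal $i\circ F_\tau$ there) and hence on $\overline{\D}$ by uniqueness of continuous lifts along a connected set, so the whole sequence converges and $\tau\in T$.

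Therefore $T=[0,1]$ and $\tilde d:=\tilde F_1$ is the required lift. For uniqueness, any other immersion $\tilde d'$ satisfying the stated conditions must agree with $\tilde d$ on $\bd\D$ (since $i:G\hookrightarrow\tilde G$ is injective and both lift $d|_{\bd\D}$), and hence on all of $\overline{\D}$ by the same lift-uniqueness principle on the connected disc. The hard part is unquestionably the closedness step: the monodromy argument works in the style of covering space theory only because the pseudoconvexity of $\tilde G$ prevents the lifts $\tilde F_{\tau_n}$ from drifting to infinity in $\tilde G$ or jumping between sheets as $t$ varies; this is exactly what makes the lemma a manifestation of the continuity principle applied to $G$.
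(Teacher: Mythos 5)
Your proof is correct, and it follows a genuinely different route from the paper's. The paper's proof is explicit and constructive: it first replaces the continuous $G$-homotopy by a mapping $\F$ that is \emph{holomorphic} in both variables on a neighbourhood of $[0,1]\times\overline{\D}$ in $\C^2$ (via Lemma~10), agreeing with $d$ on the end disc; it then applies the elementary Cauchy-integral form of the Kontinuit\"atssatz on the Hartogs figure $\mathfrak{c}_0=(\{0\}\times\overline{\D})\cup([0,1]\times\bd\D)$ to show that, for every $g\in\O(G)$, the function $g\circ\F$ extends holomorphically to a neighbourhood of $\{1\}\times\overline{\D}$; finally it glues the resulting Riemann domain to $G$, passes to the Hausdorff quotient where non-separated points are identified, and uses the universal property of $\tilde{G}$ to embed the result. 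The immersion $\tilde{d}$ is read off from this construction. Your proof instead works directly inside $\tilde{G}$ by a monodromy-in-$t$ argument, and the crucial closedness step is supplied not by the Cauchy integral but by the abstract statement that $\tilde{G}$ is a Stein (equivalently, pseudoconvex) Riemann domain --- the Oka / Docquier--Grauert theorem --- together with the maximum principle for a plurisubharmonic exhaustion, which confines the lifts $\tilde{F}_{\tau_n}(\overline{\D})$ to a fixed compact once their boundaries are. The two approaches thus both hinge on the continuity principle, but yours invokes its abstract pseudoconvexity formulation while the paper invokes its elementary Hartogs-figure formulation; yours is shorter and more conceptual but carries the heavier hypothesis that $\tilde{G}$ is Stein (a classical but nontrivial input), while the paper's needs only the universal extension property of $\tilde{G}$, at the price of a concrete approximation step (Lemma~10) and a gluing construction. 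A small point worth making explicit in your write-up: when you say pseudoconvexity ``forces'' the images to stay compact, the mechanism is precisely that a strictly plurisubharmonic exhaustion $\rho$ of $\tilde{G}$ gives $\max_{\overline{\D}}\rho\circ\tilde{F}_{\tau_n}=\max_{\bd\D}\rho\circ\tilde{F}_{\tau_n}$, uniformly bounded since all boundary circles lie in the fixed compact $i\bigl(F([0,1]\times\bd\D)\bigr)$; and for the normal-families step it helps to note that, because $F([0,1]\times\bd\D)$ is a compact subset of the open set $G$, the lifts are in fact defined on a $t$-uniform open neighbourhood of $\overline{\D}$, so one genuinely gets $C^1$ convergence on $\overline{\D}$.
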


Here $\mathcal{P}:\tilde{G}\rightarrow X^2$ is the natural
projection and $i:G \rightarrow \tilde{G}$ is the natural embedding
of $G$ into the envelope of holomorphy $\tilde{G}$ with $\mathcal{P}\circ i = id $ on $G$.

Note that the lifted disc $\tilde{d}$ may have less
self-intersections than the disc $d$. We do not know a description
of those $G_0$-discs which lift to embedded discs in the envelope of
holomorphy.

The proof of the lemma will be given below in section 3.

We are interested in the whole image $d({\D})$, but it will be
convenient to obtain each point in the image as center of another
analytic disc obtained by precomposing with an automorphism of the
unit disc. In detail, let $d$ be a $G$-disc and $p=d(z),\;\; z\in
{\D}$. Denote by $\varphi _z$ an automorphism of the unit disc
${\disc}$ which maps $0$ to $z$ and consider $d\circ \varphi _z:
\overline{\disc} \rightarrow X^2$. The disc $d\circ \varphi _z$ is a
$G$-disc with center $p=d\circ \varphi _z(0) $. Multiple points of
an immersed disc $p=d(z_1)=d(z_2)$ correspond to centers of
different discs $d\circ \varphi _{z_1}$ and $d\circ \varphi _{z_2}$.

Points in the envelope of holomorphy may occur as centers of many
different lifted $G$-discs. Introduce an equivalence relation in the
set $\mathcal{G}_0$ of $G_0$-discs. Notice that equivalent discs
have the same center.

\begin{definition}\label{Definiton E}
The equivalence relation on $\GN$ is the relation generated by the
following two conditions.

\begin{itemize}

\item[(1)] $\GN$-discs contained in $G$ and having common center are
equivalent.
\item[(2)] Equivalence is preserved under homotopies of equally
centered $G$-disc pairs.
\end{itemize}

\end{definition}

Equivalently, in condition (1) we may consider analytic discs with
images of small diameters embedded into $G$ instead of all
$\GN$-discs with image in $G$.

The second condition can be rephrased in more detail as follows. A
homotopy of pairs of equally centered $\GN$-discs is a continuous
family of ordered pairs of $G$-discs, i.e. a continuous family of
pairs of mappings $(F_1(t,z),F_2(t,z)), t\in I, $ $z$ in a
neighbourhood of $\overline{\disc}$, such that for each $t \in
[0,1]$ both mappings $F_j(t,z),\; z \in \overline{\disc}$, $j=1,2,$
define $G_0$-discs and their centers $p(t)=F_1(t,0)=F_2(t,0)$
coincide (but may depend on the parameter $t$).

Condition (2) says the following: Suppose the initial pair of discs
of the homotopy (i.e. the pair corresponding to the parameter $t=0$)
consists of equivalent discs, then so does the terminating pair
(i.e. the pair corresponding to the parameter $t=0$).

In section 2 below we describe a construction which leads to
building all possible pairs of equivalent $G_0$-discs according to
definition 3. The construction will be given in terms of trees. The
motivation for considering the introduced equivalence relation is
the following lemma which will be proved in section 3.

\begin{lemma}\label{Lemma 2}
Centers of equivalent $\GN$-discs lift to the same point in the
envelope of holomorphy: \\
If $d_1$ and $d_2$ are equivalent
$G_0$-discs then $\tilde d_1(0)= \tilde d_2(0)\in \tilde G$.
\end{lemma}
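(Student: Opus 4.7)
The plan is to reduce the lemma to verifying that each of the two generating conditions of Definition 3 preserves the property ``equally centered $G_0$-discs lift to a common point of $\tilde{G}$.'' Define the binary relation
\[
R = \{(d_1, d_2) \in \GN \times \GN : d_1(0) = d_2(0) \text{ and } \tilde{d}_1(0) = \tilde{d}_2(0) \},
\]
which is trivially reflexive, symmetric, and transitive. If every pair produced by condition (1) or (2) of Definition 3 already lies in $R$, then the equivalence relation they generate is contained in $R$ and the lemma follows.

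Condition (1) will be immediate from the uniqueness part of Lemma 1. If $d_1, d_2 \in \GN$ both have image in $G$ and share a center $p \in G$, then $i \circ d_j$ is a holomorphic immersion $\overline{\D} \looparrowright \tilde{G}$ satisfying $\mathcal{P} \circ (i \circ d_j) = d_j$ and $(i \circ d_j)(\bd \D) \subset i(G)$. By uniqueness of the lift, $\tilde{d}_j = i \circ d_j$, hence $\tilde{d}_1(0) = i(p) = \tilde{d}_2(0)$.

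The substantive step is condition (2). Given a homotopy $(F_1(t, \cdot), F_2(t, \cdot))_{t \in I}$ of equally centered $G_0$-disc pairs whose $t = 0$ member lies in $R$, apply Lemma 1 for each $t$ to obtain lifts $\tilde{F}_j(t, \cdot) : \overline{\D} \looparrowright \tilde{G}$. The key observation is that the center trajectories $\gamma_j : t \mapsto \tilde{F}_j(t, 0)$ are continuous paths in $\tilde{G}$. Both project under $\mathcal{P}$ to the common center path $p(t) = F_j(t, 0)$ in $X^2$, and they agree at $t = 0$ by assumption. Since $\tilde{G}$ is a Riemann domain over $X^2$, the projection $\mathcal{P}$ is a local biholomorphism, so path lifts are unique once the starting point is fixed. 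Hence $\gamma_1 \equiv \gamma_2$ on $I$, and in particular $\tilde{F}_1(1, 0) = \tilde{F}_2(1, 0)$, so the terminal pair also lies in $R$.

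The main obstacle is establishing continuity of the lift in $t$: Lemma 1 as stated produces a lift for a single $G_0$-disc, and it is not \emph{a priori} clear that the lifts of a continuous family of $G_0$-discs assemble into a continuous map. This should, however, fall out of the proof of Lemma 1 itself, which is expected to construct $\tilde{d}$ by lifting along a $G$-homotopy contracting $d$ to a point. Given the homotopy $F_j(t, \cdot)$, one can splice a fixed contraction of $F_j(0, \cdot)$ with the partial homotopy $s \mapsto F_j(st, \cdot)$ to obtain a contraction of $F_j(t, \cdot)$ depending continuously on $t$, and the associated lift inherits this continuity by the local-biholomorphism structure of $\mathcal{P}$. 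Making this dependence explicit is the only technical point to verify; once it is granted, the topological uniqueness argument above closes the proof.
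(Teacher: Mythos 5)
Your proposal is correct and follows essentially the same route as the paper: verify the lift-agreement property for pairs produced by condition (1), then propagate it along homotopies of equally centered pairs via uniqueness of path lifts over the local biholomorphism $\mathcal{P}$, invoking continuity of the $t$-dependent lifts (the paper cites the proof of Lemma~9 for this last point, matching the technical gap you correctly flagged). Your framing of the reduction — exhibiting the target property as an equivalence relation $R$ containing the generating conditions — is a slightly more explicit rendering of the same argument.
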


Our main theorem is the following.

\begin{theorem}\label{Theorem} Let $G$ be a domain in a Stein surface $X^2$.
Then the set of equivalence classes of $G_0$-discs can be equipped
with the structure of a connected Riemann domain $\hat G$ over
$X^2$. The natural projection $\hat {\mathcal{P}}: \hat G
\rightarrow X^2$ assigns to each equivalence class of discs their
common center. There is a natural embedding $\hat i:G \rightarrow
\hat G$, $\hat {\mathcal{P}} \circ \hat i = id$, which assigns to a
point in G the equivalence class represented by discs embedded into
$G$ (of small diameter) and centered at this point.

The Riemann domain $\hat G$ coincides with the envelope of
holomorphy $\tilde G$ of $G$.

The number of sheets of $\tilde G$ over a point $p \in X^2$ equals
the number of equivalence classes of $G_0$-discs with center $p$.
\end{theorem}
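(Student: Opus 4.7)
The plan is to construct a natural map $\Phi:\hat G\to\tilde G$, to equip $\hat G$ with a Riemann domain structure over $X^2$ for which $\Phi$ is a local biholomorphism, and then to prove that $\Phi$ is a bijection. By Lemma~\ref{Lemma 1}, every $G_0$-disc $d$ has a canonical lift $\tilde d:\overline{\disc}\looparrowright\tilde G$; by Lemma~\ref{Lemma 2} the lifted center $\tilde d(0)$ depends only on the equivalence class $[d]$. Set $\Phi([d]):=\tilde d(0)$.

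For local charts on $\hat G$ near $[d]$, use the family $w\mapsto d\circ\varphi_w$, where $\varphi_w\in\mathrm{Aut}(\disc)$ sends $0$ to $w$. Each $d\circ\varphi_w$ is a $G_0$-disc (the reparametrization $\varphi_w$ is homotopic to the identity through disc automorphisms), and its center is $d(w)$. The assignment $w\mapsto[d\circ\varphi_w]$, composed with the center projection $\hat{\mathcal{P}}$, yields the local immersion $w\mapsto d(w)$, so these assignments furnish local charts and make $\hat{\mathcal{P}}$ a local homeomorphism. Compatibility of overlapping charts is a direct consequence of condition~(2) of Definition~\ref{Definiton E}. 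Connectedness of $\hat G$ is immediate, since every $G_0$-disc is $G$-homotopic to a constant and such a homotopy produces a path in $\hat G$ from $[d]$ to a point of $\hat i(G)$, which is connected since $G$ is.

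In these charts the map $\Phi$ reads $w\mapsto\tilde d(w)\in\tilde G$, which is an immersion, so $\Phi$ is an open local biholomorphism intertwining the two projections to $X^2$. For surjectivity I would invoke maximality of the envelope of holomorphy: $\hat G$ is a connected Riemann domain over $X^2$ containing $\hat i(G)\cong G$ and every holomorphic function on $G$ extends to $\hat G$, the value of the extension at $[d]$ being obtained from the boundary values of $f\circ d$ via the Cauchy integral formula propagated along the $G$-homotopy defining $d$. By maximality the canonical map $\hat G\to\tilde G$ coincides with $\Phi$; conversely, every point of $\tilde G$ is reached from $i(G)$ by a chain of analytic continuations that can be organized into a $G_0$-disc, which gives surjectivity.

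The main obstacle will be injectivity of $\Phi$: from $\tilde d_1(0)=\tilde d_2(0)$ one must conclude $d_1\sim d_2$. This is precisely the content of the tree construction announced in section~2. The idea is to realize any coincidence of lifted centers by a finite chain of elementary moves, alternating continuous deformations of equally centered pairs (type~(2)) with substitutions by small embedded discs in $G$ at a common center (type~(1)); building such a chain from two $G_0$-discs with identified lifted centers is the technical heart of the proof. Once $\Phi$ is bijective, the sheet-counting statement follows automatically, since sheets of $\tilde G$ over $p\in X^2$ are in bijection with $\hat{\mathcal{P}}^{-1}(p)\subset\hat G$, hence with equivalence classes of $G_0$-discs centered at $p$.
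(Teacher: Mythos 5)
There are two genuine gaps, one local and one structural.

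The local gap is in your chart construction. The one-parameter family $w\mapsto d\circ\varphi_w$ projects under $\hat{\mathcal{P}}$ to $w\mapsto d(w)$, whose image is the one-complex-dimensional curve $d(\disc)$, not an open neighbourhood of $d(0)$ in the two-dimensional Stein surface $X^2$. So this does not furnish a local homeomorphism onto an open set in $X^2$ and cannot serve as a chart. The paper's Proposition~\ref{Proposition 1} instead embeds $d$ in a genuinely two-parameter family: it constructs a Riemann domain $\mathcal{R}_d=(V_d,F_d)$ over $X^2$ by flowing $d$ along a transversal holomorphic vector field, obtaining a foliation of a full two-dimensional neighbourhood by nearby analytic discs; the chart sends $q\in Q_d$ to the class of the unique leaf through $q$ recentred at $q$ (the recentring is where $\varphi_w$ enters). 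The compatibility lemma (Lemma~\ref{Lemma 7}) also relies on condition~(2) of Definition~\ref{Definiton E} along curves in the overlap of two such tubes, which your one-dimensional families cannot reproduce.

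The structural gap is the identification $\hat G=\tilde G$. You attempt it by showing the map $\Phi:[d]\mapsto\tilde d(0)$ is bijective, but your surjectivity argument --- that every point of $\tilde G$ is reached from $i(G)$ by a chain of continuations that can be organized into a single $G_0$-disc --- is precisely the novel assertion of the theorem (that a single application of the continuity principle suffices), so it cannot be invoked as a known fact; and you explicitly leave injectivity unproved. The paper never proves bijectivity of $\Phi$ directly. Instead it proves $\hat G$ is \emph{pseudoconvex} (Proposition~\ref{Proposition 3}), whence Stein by the Docquier--Grauert theorem, hence a domain of holomorphy; combined with Proposition~\ref{Proposition 2} (every holomorphic function on $G$ extends to $\hat G$), the maximality characterization of $\tilde G$ then forces $\hat G\cong\tilde G$, and the sheet count follows. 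Proposition~\ref{Proposition 3} is the technical core of the paper --- all of the neuron/dendrite/peeling machinery of Sections 5--10 exists to prove it --- and it is entirely absent from your proposal. What you call ``the technical heart'' (verifying that coinciding lifted centers force equivalence) is not where the paper invests its effort; the hard work is the pseudoconvexity, not a direct proof of injectivity.
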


It has been a classical fact that the whole envelope of holomorphy $\tilde G$
of a domain $G$ in a Stein manifold $X^2$ can be covered by the following successive
procedure.

Put $\mathcal{D}_0 = i(G) \subset \tilde G$. Consider analytic discs
immersed into $\tilde G$ with boundary in $\mathcal D _0$ and call
them $\tilde{\mathcal{D}}_0$-discs. See definition 1, but now $G$ is
replaced by $\mathcal{D}_0 = i(G)$ and $X^2$ is replaced by $\tilde
G$. A continuous family of  $\tilde{\mathcal{D}}_0$-discs which
joins a given $\tilde{\mathcal{D}}_0$-disc $d$ with a constant disc
is called a continuity-principle-family. The points in the image of
$d$ are said to be reachable by applying the continuity principle
once. See definition 2 with $G$ replaced by $\mathcal{D}_0 =i(G)$
and $X^2$ replaced by $\tilde G$. By the continuity principle (see
e.g. \cite{FrGr}) any analytic function in $i(G)$ has analytic
continuation to a neighbourhood of the image of $d$. This
distinguishes the present situation from that of lemma 1. The discs
of the family in Lemma 1 are immersed into $X^2$ rather than into
$\tilde G$. In the situation of Lemma 1 near self-intersection
points of the disc multi-valued analytic continuation may occur.

Let $D_{j+1}, \; j=0,1,...., $ be the open subset of $\tilde G$
obtained from $D_j$ by adding all points of $ \tilde G$ reached from
$D_j$ by applying the continuity principle once. The classical fact
is that $\tilde G$ is equal to the union of all $D_j$.

The theorem states that, actually, all points of the envelope of
holomorphy $\tilde G$ can be reached from $i(G)$ by applying the
continuity principle only once. Moreover, another observation of
Theorem 1 is the following. Information about the topology of the
envelope of holomorphy is contained in the intersection behaviour of
homotopies of $\GN$-discs (which depends on the Stein manifold in
which the domain is included).
\medskip

Notice that there is no unique definition of Riemann domains in the
literature. Here we use the terminology of Grauert (see
{\cite{Grau}}). In this terminology a Riemann domain over an
$n$-dimensional Stein manifold $X^n$ is a complex manifold of
dimension $n$ with no more than countably many connected components
which admits a locally biholomorphic mapping (called projection) to
$X^n$. Such Riemann domains are separable (\cite{Ju}). We do not
require (as done e.g. in {\cite{H}}) that analytic functions on a
Riemann domain separate points.

Together with the projection $\hat {\mathcal{P}}: \hat G \rightarrow
X^2$ we will use the projection $\mathcal{P}_0: \GN \rightarrow X^2$
which assigns to each individual $G_0$-disc its center, and the
mapping $\hat {\mathcal{P}_0}: \GN \rightarrow \hat G$ which assigns
to each $G_0$-disc the equivalence class it represents. Notice that
$\mathcal{P}_0 = \hat {\mathcal{P}} \circ \hat {\mathcal{P}_0}$.
Later we will use liftings of mappings with respect to different
projections. For instance, let $E$ be a topological space and $\psi
: E \rightarrow X^2$ be a continuous mapping. A continuous mapping
$\overset{\circ}{\psi} :E\rightarrow \GN$ is a lift of $\psi$ to
$\GN$ if $\mathcal{P}_0 \circ \overset{\circ}{\psi} = \psi$.
Respectively, a continuous mapping $\hat \psi : E\rightarrow \hat G$
with $\hat{\mathcal{P}} \circ \hat {\psi} = \psi$ is a lift of
$\psi$ to $\hat G$. To specify which lift is meant we will either
indicate the projection itself or the source and the target space of
the projection.

As a corollary of the theorem we obtain the following result which was surprisingly
not known before.

\begin{corollary}\label{Corollary 1}
Let $G$ be a domain in a Stein manifold $X^2$ and $\tilde G$ its
envelope of holomorphy. Then for each of its point $p$ the envelope
of holomorphy $\tilde G$ contains a (non-singular) embedded Riemann
surface (and also an immersed analytic disc) passing through $p$ and
having its boundary in $i(G)$.
\end{corollary}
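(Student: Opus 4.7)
The immersed-disc conclusion is essentially immediate from the machinery already in place. Given $p\in\tilde G$, Theorem 1 presents $p$ as an equivalence class of $\GN$-discs with common center $\hat{\mathcal{P}}(p)\in X^2$. Pick a representative $d\in\GN$; Lemma 1 produces a lift $\tilde d:\overline{\D}\looparrowright\tilde G$ with $\tilde d(\partial\D)\subset i(G)$, and Lemma 2 gives $\tilde d(0)=p$. This already realises the desired immersed analytic disc through $p$ with boundary in $i(G)$.

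For the embedded Riemann surface I would take this lifted immersed disc and perform holomorphic smoothing at its self-intersections. The image $\tilde d(\overline{\D})$ is an immersed holomorphic curve in the complex surface $\tilde G$ whose interior carries at most finitely many singularities. After an arbitrarily small holomorphic perturbation of $d$ inside $\GN$ (which preserves the lifting property by continuity of the lift in Lemma 1) I may assume that all self-intersections of $\tilde d(\overline{\D})$ are transverse double points and that $0$ is sent to a simple point of the image. Near each such node $q$ choose local coordinates on $\tilde G$ in which the two local branches appear as $\{z=0\}\cup\{w=0\}\subset\C^2$; inside a small bidisc I replace this pair by the smooth holomorphic curve $\{zw=\eps\}$ for a sufficiently small $\eps\neq 0$. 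This is a compactly supported holomorphic modification. Performing it at each of the finitely many nodes produces an embedded Riemann surface in $\tilde G$. Because each modification is confined to a neighbourhood of an interior double point, the boundary remains in $i(G)$ and the simple point $\tilde d(0)=p$ is untouched.

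The essential obstacle is the step of arranging that $p$ be a simple, not a double, point of the immersed disc. To secure this I would vary $d$ within the family of $\GN$-discs centered at $\hat{\mathcal{P}}(p)$ whose lifts all pass through $p$. Such a family has a large deformation space (analytic discs admit many variations that keep the value at $0$ fixed), and should be transverse enough in $\tilde G$ that a generic element has $0$ as an embedded point; any two members of the family are automatically equivalent, so by Lemma 2 their lifts continue to hit $p$. Verifying this genericity rigorously and checking that the local plumbings can be carried out simultaneously at all nodes while staying inside $\tilde G$ are the main technical points that remain to be nailed down, but both are local statements and are independent of the global complexity of the envelope.
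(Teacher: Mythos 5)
Your argument for the immersed-disc part is the same as the paper's: Theorem~1 plus Lemmas~1 and~2 produce a lifted $G_0$-disc $\tilde d$ with $\tilde d(0)=p$ and $\tilde d(\bd\D)\subset i(G)$, and this is correct.

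The embedded-surface part, however, has a genuine gap. Your local plumbing is not a ``compactly supported holomorphic modification'': in a bidisc around a node, the curve $\{zw=\eps\}$ does \emph{not} agree with $\{z=0\}\cup\{w=0\}$ anywhere on the boundary of the bidisc, so ``replacing'' the two branches by $\{zw=\eps\}$ and leaving the curve unchanged outside produces something that is not even a continuous surface, let alone a holomorphic curve. To glue the plumbed piece to the rest of the disc you would have to interpolate, and there is no reason the interpolation can be taken holomorphic. This is exactly the obstruction that forces one to invoke genuine deformation theory rather than a pointwise surgery. The paper does precisely that: after a Morse--Sard perturbation of $\tilde d$ (performed directly in $\tilde G$) to a nodal immersion, it cites the deformation results of Ivashkovich and Shevchishin (Theorem~3.4 and Lemma~3.8 of \cite{IvSh}) to conclude that this nodal holomorphic curve with boundary in the Stein surface $\tilde G$ is uniformly close to a smooth embedded Riemann surface. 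Their result is a global statement about moduli of non-compact curves; it handles simultaneously the compatibility of the smoothings at all nodes and the holomorphicity of the resulting embedded curve, neither of which follows from local considerations. (Your concern that $p$ not be a double point is legitimate but minor: after the generic perturbation the set of double points is finite and one may arrange $0$ to avoid it, and the final smooth curve is only claimed to be close to the nodal one, so a last small adjustment makes it pass exactly through $p$.) In short, the gap is the transition from ``nodal immersed holomorphic disc'' to ``embedded holomorphic curve'': this requires a deformation theorem for holomorphic curves in the ambient Stein surface, not a cut-and-paste construction.
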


The proof of the corollary will be given below in section 11.

Corollary 1 should be contrasted to counterexamples known for
polynomial hulls. Namely, there are compact subsets $K$ of
$\mathbb{C}^n, n\ge 2,$ with the following property. There is a
point in the polynomial hull $\hat K $ such that for any small
enough neighbourhood $U$ of $K$ there is no Riemann surface with
boundary in $U$ passing through this point.

The following question seems natural.

\begin{question}\label{Question 1}
For a point $p\in \tilde G$, what is the minimal genus of a
(non-singular) Riemann surface in $\tilde G$ passing through $p$
with boundary in $i(G)$?
\end{question}

This genus may serve as a measure how "far" the point $p$ is from
$i(G)$.

The second corollary states that for each closed orientable surface
in $\tilde G$ there is a homotopy that moves a big part of it to
$i(G)$; what remains in $\tilde G \setminus i(G)$ is an immersed
analytic disc in $\tilde G$ with boundary in $i(G)$. We may assume
that the disc is either empty or belongs to $\G \setminus \GN$.

\begin{corollary}\label{Corollary 2}
Let $G$ and $\tilde G$ be as in the preceding corollary. Let $f:S
\hookrightarrow \tilde G$ be a connected closed orientable surface
embedded into $\tilde G$. Then there exists a homotopy to a
(singular) surface $F:S \rightarrow \tilde G$ ($F$ a continuous
mapping), such that either $F(S)$ is contained in $i(G)$ or there is
a disc $\Delta \subset S$ such that $F(S \setminus \Delta)$ is
contained in $G$ and (for a suitable complex structure on $\Delta$)
$F:\overline {\Delta} \rightarrow \tilde G$ is an immersed analytic
disc in the envelope of holomorphy $\tilde G$.

\noindent In particular, $F:S \rightarrow \tilde G$ represents the
same homology class in $H_2(\tilde G)$ as the original surface.

\end{corollary}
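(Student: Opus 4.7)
The plan is to split $S$ into a surface-with-boundary piece $Y$ and a single $2$-cell $\Delta$ and to homotope these separately: the piece $Y$ will be pushed into $i(G)$ using $\pi_1$-surjectivity due to Kerner, while the $2$-cell will be replaced by an immersed analytic disc using Theorem~\ref{Theorem}.

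Fix a CW structure on $S$ with a single $2$-cell $\Delta$, so that $Y := S \setminus \Delta$ is a compact genus-$g$ surface with one boundary circle, deformation retracting onto a wedge of $2g$ loops. Since $\pi_1(i(G)) \twoheadrightarrow \pi_1(\tilde G)$ is surjective by Kerner \cite{Ke} (as noted in the introduction), each of the $2g$ generating loops in $\pi_1(Y)$ is homotopic in $\tilde G$ to a loop in $i(G)$. Carrying out these homotopies simultaneously based at a common point, extending via the deformation retraction of $Y$ onto its $1$-skeleton, and then invoking the homotopy extension property for the CW pair $(S, Y)$, I obtain a new representative (still called $f$) with $f(Y) \subset i(G)$, while $\gamma := f|_{\partial \Delta} \subset i(G)$ still bounds the disc $f|_{\overline\Delta}$ in $\tilde G$.

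Next, I would analyze the relative homotopy class $[f|_{\overline\Delta}] \in \pi_2(\tilde G, i(G))$. If it is trivial, a further homotopy rel $\partial \Delta$ pushes $f|_{\overline\Delta}$ into $i(G)$ and produces the first alternative $F(S) \subset i(G)$. Otherwise, choose any point $p \in f(\overline\Delta) \setminus i(G)$; by Theorem~\ref{Theorem}, $p$ is the center of some $G_0$-disc $d$ whose lift $\tilde d: \overline{\D} \looparrowright \tilde G$ is an immersed analytic disc with $\tilde d(\partial \D) \subset i(G)$. Using the flexibility in choosing $d$ within its equivalence class (Definition~\ref{Definiton E}) --- and in particular the $G$-homotopies of equally centered pairs that generate the equivalence --- I would arrange for $\tilde d$ to realize the same class as $f|_{\overline\Delta}$ in $\pi_2(\tilde G, i(G))$, and then set $F|_{\overline\Delta} := \tilde d$ to obtain the second alternative.

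The hard part will be this matching step: realizing an arbitrary non-trivial class in $\pi_2(\tilde G, i(G))$ by a single immersed analytic disc. This is where Theorem~\ref{Theorem} must be used in its full strength, beyond what Corollary~\ref{Corollary 1} alone would provide: the identification of \emph{all} of $\tilde G$ with equivalence classes of $G_0$-discs, together with the description of the equivalence by $G$-homotopies of equally centered pairs, is precisely what furnishes enough flexibility to adjust $\tilde d$ to the required relative class. A secondary worry is that the natural matching might produce several analytic discs rather than a single one; any such extras would have to be absorbed back into the handlebody piece $Y$ via further applications of Kerner's surjection on $\pi_1$, so as to reach the single-disc form of the conclusion.
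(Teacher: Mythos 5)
Your decomposition $S = Y \cup \Delta$, with $Y$ pushed into $i(G)$ via Kerner's $\pi_1$-surjectivity and $f|_{\overline\Delta}$ then replaced by an immersed analytic disc, is not the route the paper takes, and the step you flag as ``the hard part'' is a genuine gap that the tools you cite cannot close. The equivalence relation on $G_0$-discs (Definition 3) and its justification (Lemma 2) control only that equivalent discs lift to the \emph{same point} of $\tilde G$; they say nothing about the relative homotopy class of the lifted disc $\tilde d:(\overline\D,\partial\D)\to(\tilde G,i(G))$ in $\pi_2(\tilde G,i(G))$. Varying the representative $d$ within its equivalence class offers no mechanism for steering $[\tilde d]$ toward a prescribed target class, and nothing in Theorem 1 asserts that every class in $\pi_2(\tilde G,i(G))$ is realized by an analytic disc. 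Moreover, even granting a disc in the right class, agreement in $\pi_2(\tilde G,i(G))$ is not a homotopy rel $\partial\Delta$, so the splice $F|_{\overline\Delta}:=\tilde d$ need not produce a map homotopic to the $f$ you have built; the boundary homotopy would have to be carried out over $Y$ and returned, which your outline does not address. Your secondary suggestion to ``absorb'' extra analytic discs into $Y$ via Kerner's surjection on $\pi_1$ has no content: that surjection controls loops, not relative discs.

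The paper's proof is of a wholly different nature and, crucially, never fixes the disc $\Delta$ or a relative homotopy class in advance. It triangulates $S$ finely enough that $f$ lifts to $\GN$ on each $2$-simplex (using $\tilde G\cong\hat G$ and Lemma 9); the discrepancies between adjacent lifts along an edge are encoded by continuously varying dendrites (Lemma 5). One chooses a spanning tree of the $1$-skeleton, cuts along the complementary (``white'') edges, and glues in the dendrite trees, homotoping $f$ to a map $f^1$ that lifts to $\GN$ off a tree $\mathfrak{T}\subset S$. Fattening $\mathfrak{T}$ to a disc neighbourhood $\Delta$ (Lemma 16) turns $f^1|\overline\Delta$ into a disc-neuron with halo, and the analytic disc $F|\overline\Delta$ is then produced as an (approximate) solution of a Riemann--Hilbert boundary value problem (Lemmas 17, 18), with the homotopy realized by shrinking the fibers. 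Thus $\Delta$, $F$, and the homotopy are \emph{constructed together} from the simplicial data rather than reverse-engineered to match a pre-chosen $\pi_2$ class; you should discard the $\pi_2$-matching strategy and rework the argument along these lines.
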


The condition that $f$ is an embedding
can be skipped. It is sufficient that $f$ is continuous.

The obstruction to move a surface $f:S \hookrightarrow \tilde G$ to the lift $i(G)$
of the original domain can be described in different terms.

Denote by $\mathfrak{L}^a$ the set of loops in $G$ that bound
analytic discs in $X^2$ (equipped with the topology of $C^1$
convergence). Let $\mathfrak{L}^a_0$ be the connected component of
$\mathfrak{L}^a$ which contains constant loops. In the situation of
Corollary 2 a non-trivial analytic disc $F:\overline {\Delta}
\rightarrow \tilde G$ emerges from the existence of a
non-contractible closed curve in the set $\mathfrak{L}^a_0$ (see
below section 11).

There is a variant of Corollary 2 for surfaces with boundary in $i(G)$.
We formulate only the following special case of it.

Denote by $\varphi$ the natural homomorphism from $\pi _1 (G)$ to
$\pi _1 (\tilde G)$ which is induced by inclusion $i: G \rightarrow
\tilde G$. It is known that $\varphi$ is surjective (\cite{Ke}).
(Notice that this result of (\cite{Ke}) can also be obtained as an
immediate consequence of Theorem 1, see below section 11.)

\begin{corollary}\label{Corollary 3}
Any element of the fundamental group of $G$ which is in the kernel
of $\varphi$ can be represented by a loop in $i(G)$ which bounds an
analytic disc that is immersed into $\tilde G$.

\end{corollary}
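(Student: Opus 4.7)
The plan is to deduce Corollary 3 from the variant of Corollary 2 for surfaces with boundary in $i(G)$ (alluded to in the paragraph just above the statement of the corollary), taking as the surface a topological disc in $\tilde G$ that contracts $i\circ\gamma$.

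Fix a smooth representative $\gamma:\bd\disc \to G$ of the class $\alpha \in \ker \varphi$. Since $\varphi(\alpha)=0$, the loop $i\circ\gamma$ is null-homotopic in $\tilde G$ and hence extends to a continuous map $F_0:\overline{\disc}\to \tilde G$ with $F_0|_{\bd\disc}=i\circ\gamma$. Applying the variant of Corollary 2 to $F_0$ (a singular topological disc in $\tilde G$ whose boundary already sits in $i(G)$) produces a continuous homotopy $F_t:\overline{\disc}\to \tilde G$ with $F_t(\bd\disc)\subset i(G)$ for every $t$, whose endpoint $F=F_1$ either lies entirely in $i(G)$, or admits a subdisc $\overline\Delta \subset \overline{\disc}$ such that $F|_{\overline\Delta}$ is an analytic disc immersed in $\tilde G$ and $F(\overline{\disc}\setminus\Delta)\subset i(G)$. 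In the first alternative $F$ contracts $i\circ\gamma$ inside $i(G)$, so $\alpha=0$ in $\pi_1(G)$ and the conclusion is trivial (a constant loop bounds a constant disc).

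In the second alternative, $F|_{\overline\Delta}$ is the sought-after analytic disc. Its boundary $F|_{\bd\Delta}$ is a loop in $i(G)$, and the annular region $F|_{\overline{\disc}\setminus\Delta}$, entirely contained in $i(G)$, supplies a homotopy in $i(G)$ between $F|_{\bd\disc}=i\circ\gamma$ and $F|_{\bd\Delta}$ with matching orientations, since the two boundary circles cobound the annular image. Hence $\beta := i^{-1}\circ F|_{\bd\Delta}$ is a loop in $G$ representing $\alpha$, and $i\circ\beta$ bounds the analytic disc $F|_{\overline\Delta}$ immersed in $\tilde G$, exactly as required.

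The main obstacle is establishing the variant of Corollary 2 invoked in the second step. One has to adapt the proof of Corollary 2, which treats closed orientable surfaces via Theorem 1 and the identification $\tilde G=\hat G$, to surfaces with boundary: the boundary condition $F_t(\bd\disc)\subset i(G)$ must be preserved throughout the deformation, and one must ensure that the residual non-$i(G)$ portion concentrates on a single subdisc supporting an analytic disc rather than breaking into several pieces. Following the hint the author gives for Corollary 2, the obstruction to absorbing $F_0$ entirely into $i(G)$ should correspond to a non-contractible closed curve in the loop space $\mathfrak{L}^a_0$ obtained by tracking the boundaries of the $G_0$-disc representatives of $F_0$ along the homotopy, and the analytic disc $F|_{\overline\Delta}$ is its geometric realisation inside $\tilde G$.
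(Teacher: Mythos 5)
Your reduction mirrors the paper's own architecture: the paper does not derive Corollary~\ref{Corollary 3} from Corollary~\ref{Corollary 2} as a black box, but proves in Section~11 a single unified statement for a continuous map $f:S\rightarrow\tilde G$ from a compact orientable connected surface with or without boundary (with $f(\partial S)\subset i(G)$ when $\partial S\neq\emptyset$), from which both corollaries are read off; the case $S$ a disc gives Corollary~\ref{Corollary 3}. Your annulus argument --- that $F$ restricted to $\overline{\D}\setminus\Delta$ lies in $i(G)$ and therefore supplies the $i(G)$-homotopy between the boundary loop and $F|\partial\Delta$, so the latter represents the same class --- is exactly how the disc case is extracted, and it is correct (with the small caveat that the homotopy $F_t$ need not fix $\partial\D$ pointwise; one only needs that $F_t(\partial\D)\subset i(G)$ throughout, which keeps the boundary loop in the same class).

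The gap is that the proposal stops there. You defer the boundary variant of Corollary~\ref{Corollary 2} as ``the main obstacle'' and offer only a restatement of the paper's heuristic remark about $\mathfrak{L}^a_0$. But for $S$ a disc that variant is essentially Corollary~\ref{Corollary 3} itself, so the reduction has not reduced anything --- the entire content of the proof is still owed. That content is the bulk of Section~11: choose a simplicial decomposition of $S$ fine enough that over each $2$-simplex the map $f$ lifts to $\GN$; two-color the $1$-simplices so the black edges form a spanning tree $\mathfrak{T}'$ containing every vertex (boundary edges colored white); use Lemma~\ref{Lemma 6} to attach continuously varying dendrites reconciling the lifts across each white edge, enlarging $\mathfrak{T}'$ to a tree $\mathfrak{T}\subset S$ whose complement carries a lift to $\GN$ extending continuously to its pellicle; fatten $\mathfrak{T}$ to a disc $\Delta$ by Mergelyan-type approximation; and solve a Riemann--Hilbert boundary value problem along $\partial\Delta$ (Lemmas~\ref{Lemma 16} and~\ref{Lemma 16a}) to produce the immersed analytic disc $F|\overline{\Delta}$ together with a homotopy pushing $F(S\setminus\Delta)$ into $i(G)$. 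None of this is supplied, and without it the proposal contains no proof.
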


A reformulation of the corollary is the following. Any loop in
$i(G)$ which is contractible in $\tilde G$ is homotopic in $i(G)$ to
a loop that bounds an immersed analytic disc in $\tilde G$.

The corollary can be slightly strengthened. Namely, given any point
$p \in \tilde G$, the analytic disc of Corollary 3 may be taken to
pass through $p$. An analogous remark holds for Corollary 2.

\medskip

We do not know which elements of the kernel $\varphi$ can be
represented by boundaries of {\it embedded} holomorphic discs.

\medskip

We state separately the versions of Corollary 2 and 3 for Stein
fillings. A relatively compact strictly pseudoconvex domain $\Omega$
in a Stein surface is a Stein filling of the contact three-manifold
$M^3$ if $M^3$ is contactomorphic to $\partial \Omega$ with the
contact structure induced by the complex tangencies.

\begin{corollary}\label{Corollary 4a}
Let $\Omega$ be a relatively compact strictly pseudoconvex domain in
a Stein surface $X^2$ with boundary $\partial \Omega = M^3$. Let
$f:S \hookrightarrow \overline {\Omega}$ be a connected closed
orientable surface embedded into $\overline {\Omega}$. Then there
exists a homotopy to a (singular) surface $F:S \rightarrow \overline
{\Omega}$ ($F$ a continuous mapping), such that either $F(S)$ is
contained in $\partial {\Omega} = M^3$ or there is a disc $\Delta
\subset S$ such that $F(S \setminus \Delta)$ is contained in $M^3$
and (with a suitable complex structure on $\Delta) $
$F:\overline{\Delta} \rightarrow \overline{\Omega}$ is an immersed
analytic disc in $\overline {\Omega}$ with boundary in $M^3$.

In particular, $F:S \rightarrow \overline {\Omega}$ represents the
same homology class in $H_2(\overline {\Omega})$ as the original surface.
\end{corollary}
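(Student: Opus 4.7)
The plan is to deduce Corollary 4a from Corollary 2 applied to a one-sided collar of $\partial\Omega=M^3$. Fix a smooth strictly plurisubharmonic defining function $\rho$ for $\Omega$ so that $\Omega=\{\rho<0\}$ and $M^3=\{\rho=0\}$, and for small $\varepsilon>0$ set $G_\varepsilon=\{-\varepsilon<\rho<0\}\subset\Omega$. The classical Hartogs-type extension across a strictly pseudoconvex boundary shows that the envelope of holomorphy of $G_\varepsilon$ is $\Omega$ itself, with natural embedding $i:G_\varepsilon\hookrightarrow\Omega$. A small inward push of $f$ along a collar of $M^3$ produces an embedding $f_0:S\hookrightarrow\Omega$ homotopic to $f$ in $\overline\Omega$, and applying Corollary 2 to $f_0$ with $G=G_\varepsilon$ and $\tilde G=\Omega$ yields a homotopy in $\Omega$ ending at $F_0:S\to\Omega$ such that either $F_0(S)\subset G_\varepsilon$, or there is a disc $\Delta\subset S$ with $F_0(S\setminus\Delta)\subset G_\varepsilon$ and $F_0|_{\overline\Delta}$ an immersed analytic disc in $\Omega$ whose boundary lies in $G_\varepsilon$.

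Next I would push everything in $G_\varepsilon$ onto $M^3$. The closed collar $\overline{G_\varepsilon}$ deformation retracts onto $M^3$ along the normalised gradient flow of $\rho$ via a retraction $r_t$ with $r_0=\mathrm{id}$ and $r_1(\overline{G_\varepsilon})\subset M^3$. Applied to $F_0|_{S\setminus\Delta}$ this already completes the first case of Corollary 2 (take $\Delta=\emptyset$). In the second case, $r_t$ also sends the boundary curve $F_0(\partial\Delta)$ into $M^3$, and the remaining task is to accompany this boundary homotopy by a homotopy of the disc $F_0|_{\overline\Delta}$ itself, through immersions into $\overline\Omega$, ending at an analytic disc $d':\overline{\D}\to\overline\Omega$ with $d'(\partial\D)\subset M^3$. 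To produce $d'$, write $d=F_0|_{\overline\Delta}$; by Definition 1, $d$ extends to a holomorphic immersion $\tilde d:U\to X^2$ on a neighbourhood $U\supset\overline{\D}$, and, $\tilde d$ being an immersion and $\rho$ strictly plurisubharmonic, the function $u=\rho\circ\tilde d$ is strictly subharmonic on $U$, negative on $\overline{\D}$ with values in $(-\varepsilon,0)$ on $\partial\D$. Let $V\subset U$ be the connected component of $\{u<0\}$ containing $\overline{\D}$, and let $V'$ be the simply connected hull obtained by filling in the bounded components of $U\setminus V$; since $\partial V\cap U\subset\{u=0\}$ by continuity of $u$ and disjointness of distinct components of $\{u<0\}$, the outer boundary $\partial V'$ lies in $\{u=0\}$ as well, so $\tilde d(\partial V')\subset M^3$. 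A Riemann uniformisation $\psi:\overline{\D}\to\overline{V'}$ (continuous to the boundary, say by Carathéodory after a small regularisation of $d$) yields the desired $d'=\tilde d\circ\psi$, and a monotone isotopy of topological discs from $\overline{\D}$ to $\overline{V'}$ inside $\overline{V'}$, composed with $\tilde d$, produces a continuous family of immersions $\overline{\D}\to\overline\Omega$ linking $d$ to $d'$ whose boundaries sweep a homotopy in $\overline{G_\varepsilon}$ that one reparametrises to match $r_t\circ F_0|_{\partial\Delta}$.

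The main obstacle is guaranteeing that the neighbourhood $U$ of $\overline{\D}$ on which $\tilde d$ is defined is large enough for $u$ to take the value $0$, equivalently that $\tilde d$ extends past $M^3$; Definition 1 only supplies some unspecified $U$ which could be too small. I would address this by replacing $d$, within its equivalence class of $G_\varepsilon$-discs from Theorem 1 and the tree construction of Section 2, by a representative whose holomorphic extension crosses $M^3$ transversally — strict plurisubharmonicity of $\rho$ together with the flexibility in perturbing $G_\varepsilon$-discs near their boundary should make this possible without leaving the equivalence class, and such transversal extensions are a standard tool near strictly pseudoconvex boundaries. Once a suitable $d'$ and connecting family of immersions are in hand, one glues the homotopies on $S\setminus\Delta$ and on $\overline\Delta$ across a collar of $\partial\Delta$ in $S$ to produce the required surface $F:S\to\overline\Omega$; preservation of the homology class in $H_2(\overline\Omega)$ is then automatic since $F$ is homotopic to $f$.
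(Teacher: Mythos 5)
The central idea of deducing the corollary from Corollary~2 applied to a collar of $M^3$ is indeed the paper's approach, but you have picked the \emph{inner} collar $G_\varepsilon=\{-\varepsilon<\rho<0\}\subset\Omega$, whereas the paper uses the \emph{outer} collar $G=\{0<\rho<\varepsilon\}$ with $\tilde G=\Omega_\varepsilon=\{\rho<\varepsilon\}$, and this difference is decisive. With the outer collar, the analytic disc $F_1|_{\overline\Delta}$ produced by Corollary~2 has its boundary in $\{\rho>0\}$, outside $\overline\Omega$; so if it meets $\Omega$ at all, the subharmonic function $\rho\circ F_1$ is positive on $\partial\Delta$ and negative somewhere inside, and $\Delta_1=\{\rho\circ F_1<0\}$ is automatically a relatively compact union of simply connected domains in $\Delta$ by the maximum principle. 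The retraction $\mathfrak{I}:\Omega_\varepsilon\to\overline\Omega$ is the identity on $\overline\Omega$, so $\mathfrak{I}\circ F_1$ is already the final surface when $\Delta_1$ is connected, with no matching problem along $\partial\Delta_1$; if $\Delta_1$ is disconnected, the paper joins its components via Legendrian arcs on $M^3$ and merges them into one analytic disc by Lemma~23.

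With your inner collar, Corollary~2 hands you a disc lying \emph{entirely inside the open domain} $\Omega$, with boundary in $G_\varepsilon$, and there is no reason whatsoever for its immersive extension to reach, let alone cross, $M^3$. You identify this as the ``main obstacle'' yourself, but the proposed repair --- choosing another representative in the equivalence class whose extension crosses $M^3$ transversally --- is not supported by anything in the paper: the equivalence relation of Definition~3 controls which discs represent the same point of $\hat G$, not how far their immersive extensions reach in $X^2$. Without that step the construction of $V'$ never gets off the ground: if the extension neighbourhood $U$ is small, $\{u<0\}$ may simply be all of $U$, and there is no relatively compact component $V$ whose boundary lands on $\{u=0\}$. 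A secondary gap is the claim that the disc-growing homotopy $\tilde d|_{\partial(\text{growing disc})}$ can be ``reparametrised to match'' the retraction homotopy $r_t\circ F_0|_{\partial\Delta}$: these two homotopies of the same initial loop end at \emph{different} curves on $M^3$ ($\tilde d(\partial V')$ versus $r_1\circ F_0(\partial\Delta)$) and in general cannot be made to agree by reparametrisation. Both difficulties evaporate once you switch to the outer collar, which is exactly what the paper does.
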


\begin{corollary}\label{Corollary 4}
Let as before $\Omega$ be a relatively compact strictly pseudoconvex
domain in a Stein surface $X^2$ with boundary $\partial \Omega =
M^3$. Denote by $\varphi$ the homomorphism from $\pi _1 (M^3)$ to
$\pi _1 ( \overline \Omega )$ induced by inclusion $M^3
\hookrightarrow \overline \Omega$.

Then any element in the kernel $ker \varphi$ can be represented by
the boundary of an analytic disc immersed into $\overline \Omega$.
\end{corollary}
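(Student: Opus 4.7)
The plan is to deduce Corollary 4 from Corollary 3 by taking $G$ to be a thin one-sided collar of $M^3$, and then to replace the analytic disc provided by Corollary 3 with one whose boundary lies on $M^3$ by exploiting the strict pseudoconvexity of $\partial\Omega$. Fix a smooth strictly plurisubharmonic defining function $\rho$ for $\Omega$, normalized so that $\rho<0$ on $\Omega$ and $d\rho\neq 0$ on $M^3=\{\rho=0\}$. For $\varepsilon>0$ sufficiently small set $G:=\{-\varepsilon<\rho<0\}\subset\Omega\subset X^2$. It is classical (and recalled in the introduction) that the envelope of holomorphy $\tilde G$ coincides with $\Omega$. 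Moreover the gradient flow of $\rho$ produces a diffeomorphism $\overline G\cong M^3\times[-\varepsilon,0]$, inducing an isomorphism $\pi_1(G)\cong\pi_1(M^3)$ under which the natural map $\pi_1(G)\to\pi_1(\tilde G)=\pi_1(\overline\Omega)$ is identified with $\varphi$. Given $\alpha\in\ker\varphi$, Corollary 3 applied to $G\subset X^2$ yields a loop $\ell\subset G$ representing $\alpha$ and an immersed analytic disc $d\colon\overline\D\looparrowright\Omega$ with $d(\partial\D)=\ell$; by Definition 1, $d$ extends holomorphically to some open neighborhood $U$ of $\overline\D$ in $\C$.

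The main work is to replace $d$ by an analytic disc with boundary on $M^3$. Consider $u:=\rho\circ d$ on $U$; since $d$ is a holomorphic immersion and $\rho$ is strictly plurisubharmonic, $u$ is strictly subharmonic, and $u<0$ on $\overline\D$ while $-\varepsilon<u<0$ on $\partial\D$. Let $\Delta^{*}$ denote the connected component of $\{u<0\}\cap U$ containing $\overline\D$. Provided $\Delta^{*}$ is a smoothly bounded, relatively compact, simply connected domain in $U$, its boundary satisfies $\partial\Delta^{*}\subset\{u=0\}=d^{-1}(M^3)$ while $d(\Delta^{*})\subset\Omega$; then the Riemann mapping theorem together with Carath\'eodory's boundary extension yields a biholomorphism $\psi\colon\overline\D\to\overline{\Delta^{*}}$, and $d':=d\circ\psi\colon\overline\D\looparrowright\overline\Omega$ is an immersed analytic disc with $d'(\partial\D)\subset M^3$. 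To check that $d'(\partial\D)$ represents $\alpha$, observe that the curve $\partial\Delta^{*}$ lies close to $\partial\D$ in $U$, so $d(\partial\Delta^{*})\subset M^3$ lies close to $\ell\subset G$ in the collar $\overline G$; the gradient flow of $\rho$ then provides a homotopy in $\overline G$ between the two loops, which, under the product structure $\overline G\cong M^3\times[-\varepsilon,0]$, projects to a homotopy in $M^3$ between $d'(\partial\D)$ and the projection of $\ell$ onto $M^3$. The latter represents $\alpha$ by construction.

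The main obstacle is the topological and analytic control asserted for $\Delta^{*}$ in the second paragraph: namely, that $\Delta^{*}$ is simply connected, relatively compact in $U$, and smoothly bounded. The simplest pathology is excluded by strict subharmonicity: if $K$ is a bounded component of $U\setminus\Delta^{*}$ on which $u\ge 0$ everywhere, then $u=0$ on $\partial K\subset\partial\Delta^{*}$ forces $u\le 0$ on $K$ by the maximum principle, whence $u\equiv 0$ on $K$, contradicting strict subharmonicity. In general, however, the zero set of $u$ outside $\overline\D$ can be intricate---additional components of $\{u<0\}$ may sit inside the topological closure of $\Delta^{*}$, and $\partial\Delta^{*}$ may fail to be smooth at critical points of $u$. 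These issues should be handled either by a generic perturbation of $d$ within its equivalence class (using the flexibility of equivalent $G_0$-discs supplied by the Theorem) or by a limiting argument as $\varepsilon\to 0$ together with compactness for a sequence of analytic discs of controlled diameter. Supplying this technical control is where the genuine work of the proof lies; granted it, the reduction to Corollary 3 outlined above yields Corollary 4.
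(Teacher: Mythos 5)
The proof breaks at the first step, namely the choice of collar. You take the \emph{inner} collar $G = \{-\varepsilon<\rho<0\}\subset\Omega$, so $\tilde G=\Omega$ and the disc $d$ produced by Corollary 3 is immersed into the \emph{open} domain $\Omega$. Since $d(\overline\D)$ is a compact subset of the open set $\Omega$, we have $\sup_{\overline\D}\rho\circ d=-c<0$; shrinking $U$ if necessary gives $\rho\circ d<0$ on all of $U$, so $\Delta^{*}=U$ is not relatively compact in $U$ and $\partial\Delta^{*}\cap U=\emptyset$. More to the point, nothing forces the image of $d$ (or of any holomorphic continuation of it) to reach $M^{3}=\{\rho=0\}$ at all. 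This is not a technicality that a generic perturbation or a limit $\varepsilon\to0$ will cure: producing from a holomorphic disc inside a strictly pseudoconvex domain a disc whose boundary lies on the boundary of the domain is itself a nontrivial theorem (of Forstneric--Globevnik type), and your argument supplies no mechanism for it.

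The paper takes the \emph{outer} collar $G=\{0<\rho<\varepsilon\}$, whose envelope is $\Omega_{\varepsilon}=\{\rho<\varepsilon\}\supsetneq\overline\Omega$. Corollary 3 then yields an analytic disc $F_{1}|\overline\Delta$ immersed into $\Omega_{\varepsilon}$ with boundary strictly \emph{outside} $\overline\Omega$, so $\Delta_{1}=F_{1}^{-1}(\Omega)$ is automatically a relatively compact proper subset of $\Delta$, and the maximum principle for $\rho\circ F_{1}$ shows that $\Delta_{1}$ is a union of simply connected planar domains. The retraction of $\Omega_{\varepsilon}$ onto $\overline\Omega$, being the identity on $\overline\Omega$, preserves the analyticity of $F_{1}$ on $\Delta_{1}$ while pushing the rest onto $M^{3}$. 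The price of the outer collar is that $\Delta_{1}$ may be disconnected; the paper handles this by joining the components with Legendrian arcs on $M^{3}$ and approximating the union of analytic discs and arcs by a single analytic disc (Lemma 23). Your restriction argument would have bypassed that work, but it rests on a compactness hypothesis for $\Delta^{*}$ that fails in general.
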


Again, for any point $p \in \Omega$ the disc can be chosen passing
through $p$.

We do not know whether in the situation of Corollary 5 one can
always find an {\it embedded} analytic disc (in other words whether
a "holomorphic version" of the loop theorem holds) or whether the
minimal number of self-intersections of analytic discs whose
boundaries represent a given element of the fundamental group of
$M^3$ determines a non-trivial invariant depending on the contact
manifold $M^3$, the filling $\Omega$ and the element of the
fundamental group.

Stepan Orevkov proposed to consider the following example where
$\Omega$ is a tubular neighbourhood of a Lagrangian torus in
$\mathbb{C}^2$. In this case all elements in the kernel of the
homomorphism $\varphi$ can be represented by boundaries of embedded
analytic discs.

\medskip

\noindent {\bf Example}. Let $T$ be the tube domain $\Delta  \oplus
i{\mathbb{R}^2 } $ where $\Delta$ is the unit disc $\Delta  \defined
\{   x_1^2 + x_2^2 < 1 \} $ in $\mathbb{R}^2$. The map $\exp:
(z_1,z_2) \rightarrow (\exp (z_1), \exp (z_2))$ is a covering from
$T$ onto a neighbourhood $\Omega$ of the standard torus $\bd \D
\times \bd \D$. The image of $\bd {\Delta} \times \{0\}$ (with
counterclockwise orientation) under the aforementioned mapping
represents a generator of the kernel of the homomorphism $\varphi :
\pi _1 (\partial \Omega) \rightarrow \pi _1(\overline \Omega)$. The
analytic discs $f_{\pm}(z)=(z,\mp iz), z \in \overline \D,$ are embedded
into the closure of $T$. Their boundaries are homotopic to $\bd
{\Delta} \times \{0\}$ with counterclockwise, respectively,
clockwise orientation. The images of the discs under the map $\exp$
are embedded analytic discs in $\overline \Omega$ whose boundaries
represent a generator of the kernel $\varphi$, respectively, its
inverse. Multiples of the generator can be represented by the
boundary of the following embedded discs. Consider $N$ discs
$f_{+,j}(z)=(z,- iz + i \phi _j)$ in $\overline T , z \in \overline
\D,$ for $N$ different points $\phi_j \in [0,\pi)$. The analytic
discs $\exp \circ f_{+,j}$ are embedded and pairwise disjoint. Join
the boundaries of two consecutive discs by a Legendrian arc. Suppose
all Legendrian arcs are pairwise disjoint, without
self-intersections and meet the union of the boundaries of the discs
exactly at the endpoints . Approximate the union of all the analytic
discs and all the arcs by a single analytic disc. (See below section
11 for details.) In the same way we proceed with multiples of the
inverse of the generator. The boundaries of such discs represent all
elements of the kernel.

\medskip

\begin{question}\label{Question 2}
Let $p,q$ and $r$ be pairwise relatively prime integers and
$\varepsilon \ne 0$ a small complex number. Consider the
Milnor-Brieskorn spheres $M(p,q,r)
\defined \{z_1^p + z_2^q + z_3^r = \varepsilon\} \bigcap S^5 \subset
\C ^3$ and their natural filling. What is the minimal numbers of
self-intersections of an analytic disc whose boundary represents a
given element of the fundamental group of $M(p,q,r)$? What are these
numbers for a collection of elements that generate the fundamental
group in the sense of semigroups?
\end{question}

We conclude with the following observation for the case
$M^3=\partial \Omega$ is a homology sphere. Consider any embedded
loop $f:\bd \D \rightarrow M^3$ which bounds an analytic disc in the
filling $\overline \Omega$. We may always assume that the loop
passes through a given base point in $M^3$ (see below the sketch of
Lemma 23). The loop determines a unique element $s_f$ of the second
homology $H_2(\overline \Omega)$. Indeed, consider the analytic disc
$f:\overline \D \rightarrow \Omega$ bounded by this loop and attach
to it along the loop a compact surface with boundary, the surface
contained in $M^3$. We obtain a closed surface in $\overline
\Omega$. Since $H_2(M^3) = 0$ the homology class represented by the
closed surface in $H_2( \overline \Omega)$ does not depend on the
choice of the surface contained in $M^3$ that was attached to the
loop. Further, two loops $f_1$ and $f_2$, $f_j: \bd \D \rightarrow
M^3$ for $j=1,2$, both bounding analytic discs in $\overline \Omega$
determine the same element in $H_2( \overline \Omega)$ if they are
homotopic in $\bd \Omega$ through loops bounding analytic discs. We
do not have a satisfactory description of such homotopies. Notice
that the set of homotopy classes of boundaries of analytic discs
(passing through a given base point) has the structure of a
semigroup.

The present work was done at the Max-Planck-Institut f\"ur
Mathematik and at Toulouse University with a CNRS grant. The author
gratefully acknowledges the unbureaucratic support and hospitality
of these institutions. The author would like to thank N.Kruzhilin,
S.Nemirovski and S.Orevkov for enlightening discussions and a group
of visitors of a Mittag-Leffler semester, including N.Kruzhilin,
L.Lempert, S.Nemirovski, S.Orevkov and A.Tumanov for their interest.
The author is also grateful to F.Forstneric and L.Stout for useful
information concerning references.

\section{A constructive description of the equivalence condition}

Call a pair of equally centered $G_0$-discs an $ec$-pair for short.

\begin{lemma}\label{Lemma 4}
The set of all pairs of equivalent $G_0$-discs can be constructed by
successively choosing and applying a finite number of times one of
the following procedures.

\begin{itemize}
\item[(i)]  Take a pair of small equally centered embedded analytic discs
contained in $G$.
\item[(ii)] Take a pair of $G_0$-discs that is homotopic through $ec$-pairs
to a pair of equivalent discs.
\item[(iii)] Let $d_1,\; d_2,\;...,\; d_N$ be $G_0$-discs such that
consecutive discs $d_k,\; d_{k+1},\;k=1,2,...,N-1,\,$ are
equivalent. Take the pair $(d_1,d_N)$.
\end{itemize}
\end{lemma}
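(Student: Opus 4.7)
The plan is to prove both inclusions between the set $S$ of pairs produced by successively applying procedures (i)--(iii) and the equivalence relation $\sim$ on $\GN$ generated by Definition 3.

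Soundness ($S\subseteq\sim$) is essentially immediate. Each procedure outputs pairs already known to be equivalent: procedure (i) is condition (1) of Definition 3 (sharpened to small embedded discs as the author's remark following the definition already justifies); procedure (ii) is condition (2) read verbatim; procedure (iii) is transitivity, which comes for free once $\sim$ is an equivalence relation.

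For completeness ($\sim\subseteq S$), I would use that $\sim$ is the \emph{smallest} equivalence relation satisfying (1) and (2). Thus it is enough to verify that the set $S$ of constructible pairs is itself an equivalence relation on $\GN$ satisfying (1) and (2). Condition (1) holds by procedure (i), condition (2) by procedure (ii), and transitivity by procedure (iii). The only nontrivial points are reflexivity and symmetry. For reflexivity, given $d\in\GN$, pick by Definition 2 a $G$-homotopy $F(t,z)$ with $F(1,\cdot)=d$ and $F(0,\cdot)$ a constant disc in $G$; the diagonal family $(F(t,\cdot),F(t,\cdot))$ is an $ec$-homotopy whose initial pair consists of two equal constant discs embedded into $G$ and hence lies in $S$ by procedure (i), so procedure (ii) places $(d,d)$ in $S$. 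For symmetry, I would induct on the construction history: pairs from (i) are visibly symmetric; a pair obtained from (ii) via an $ec$-homotopy $(F_1(t,\cdot),F_2(t,\cdot))$ is handled by running the reversed and swapped homotopy $(F_2(1-t,\cdot),F_1(1-t,\cdot))$ and invoking the inductive hypothesis on the initial pair; a pair obtained from (iii) by a chain $d_1=e_1,\dots,e_N=d_2$ is handled by reversing the chain.

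The only (mild) obstacle is the bookkeeping involved in arguing symmetry along the inductive construction. A tidier alternative I might adopt is to work from the outset with the symmetrization $S' := S\cup\{(b,a):(a,b)\in S\}$: one checks directly that $S'$ is still closed under the three procedures, is manifestly symmetric, and contains the diagonal by the reflexivity argument above. The minimality of $\sim$ among equivalence relations satisfying (1) and (2) then forces $\sim\subseteq S'=S$, which combined with the soundness inclusion yields $S=\sim$ and completes the lemma.
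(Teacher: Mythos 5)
Your proposal is correct and follows essentially the same strategy as the paper: show that the set $S$ of constructible pairs is an equivalence relation satisfying conditions (1) and (2) of Definition \ref{Definiton E}, then invoke minimality of the generated relation. You actually supply the reflexivity and symmetry arguments that the paper merely asserts without detail, so your write-up is a slightly more careful version of the same proof.
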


\begin{proof}
Procedures (i) and (ii) give pairs of equivalent discs by conditions
(1) and (2) of definition 3, respectively. Since an equivalence
relation is transitive (iii) gives pairs of equivalent discs.

It remains to see that all pairs of equivalent discs can be obtained
in this way. Consider the property of a pair of discs to belong to
the set constructed by the procedure described in Lemma 3. This is an equivalence relation
since it is symmetric and transitive. Moreover, it satisfies
conditions (1) and (2), and it is minimal with the latter property.
Therefore it coincides with the previous equivalence relation.

\end{proof}

Lemma \ref{Lemma 4} allows to characterize pairs of equivalent discs
as those for which there exists an associated planar rooted tree. (Such a
tree is not uniquely determined for a given pair of discs.) This goes as follows.

Recall that a rooted tree is a connected graph without simple closed
paths with a vertex chosen as root. If the root of the tree is not a
multiple vertex we call the rooted tree simple. Vertices that are
different from the root and have only one adjacent edge are called
leaves. For each pair of vertices there is a unique path joining
them. This allows to orient the edges of the graph "towards the
root". We call the two endpoints of an oriented edge its $minus$-end
and its $plus$-end respectively. (Orientation is towards the
$plus$-end.)

We will consider trees that are (embedded) subsets of the plane with
edges being straight line segments. The following additional
structure is given. Edges whose $plus$-end is a common vertex of the
graph (incoming edges for this vertex) will be given a label and
placed in the following way. When surrounding the common vertex
counterclockwise starting from a point on the first labeled edge, we
meet the edges in the order prescribed by labeling. There is at most
one edge whose minus end is a given vertex (outgoing edge for this
vertex). The outgoing edge is always placed between the last and the
first labeled incoming edge (with respect to counterclockwise
orientation).

Pairs of discs constructed by lemma \ref{Lemma 4} produce planar
rooted trees in the following way.

Pairs of small equally centered embedded analytic discs contained in
$G$ correspond to leaves. A single leaf (see procedure (i)) can be
considered as a tree without edges with its root coinciding with its
leaf.

Providing procedure (ii) with a pair of discs corresponds to
attaching an edge to the root of its tree. The attached edge
corresponds to the homotopy of $ec$-pairs, in particular, each point
on the edge corresponds to a single $ec$-pair. The $minus$-end of
the attached edge is the root of the previous tree, it corresponds
to the original pair of equivalent discs, the $plus$-end is the root
of the new tree, it corresponds to the pair of discs obtained from
the original one by applying procedure (ii).

Procedure (iii) obtains a pair of discs $d_1,d_N$ from the pairs
$(d_1,d_2)$,...,$(d_{N-1},d_N)$ of equivalent discs. This procedure
corresponds to gluing trees together along their common root. More,
detailed, consider the rooted trees $T_1,\;T_2, \;,...,\;T_{N-1}$
corresponding to the aforementioned pairs together with their label.
Identify their roots. The obtained tree may be represented as subset
of the plane, so that the previous trees are ordered
counterclockwise around the common root. We obtain a new rooted
tree, its root corresponds to the pair $(d_1,d_N)$.

We proved the following lemma.

\begin{lemma}\label{Lemma 5}
To each pair of equivalent $G_0$-discs corresponds a planar rooted
tree such that the root of the tree corresponds to this pair. Leaves
correspond to pairs of small equally centered analytic discs
embedded into $G$. Edges correspond to $ec$-homotopies. For each
multiple vertex those edges that have the vertex as $plus$-end are
ordered. In this order their ends correspond to pairs
$(d_1,d_2)$,$(d_2,d_3)$,..., $(d_{N-1},d_N)$. The respective
multiple vertex of the tree corresponds to the pair $(d_1,d_N)$.

There is a continuous mapping $\hat \Phi _T :T \rightarrow X^2$. It
assigns to each point of $T$ the class represented by the equivalent
discs corresponding to this point. The mapping $ \Phi _T = \hat
{\mathcal{P}} \circ \hat \Phi _T$ assigns to each point of the tree
the center of the equivalent discs corresponding to this point.
\end{lemma}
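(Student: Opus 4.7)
The plan is to mirror, step by step, the inductive construction of pairs of equivalent $G_0$-discs furnished by Lemma \ref{Lemma 4}, building the tree $T$ together with the maps $\Phi_T$ and $\hat\Phi_T$ at each stage. Each of the three procedures in Lemma \ref{Lemma 4} corresponds to a combinatorial modification of the tree---introduce a leaf, attach an edge to the current root, or identify the roots of several subtrees---and the maps are extended on the newly added piece in the only reasonable way; the content of the proof is then just to check that these local extensions agree at the vertices where the pieces meet.

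For procedure (i), I would take the tree to be a single vertex, simultaneously root and leaf, with $\Phi_T$ assigning to it the common center of the two small embedded discs in $G$ and $\hat\Phi_T$ assigning to it the equivalence class these discs represent (well defined by part (1) of Definition \ref{Definiton E}). For procedure (ii), starting from a tree $T'$ already built for a pair equivalent to the ec-homotopy's initial pair, I would attach to its root a new oriented edge, parameterized by $t\in[0,1]$, and define on it $\Phi_T(t)=F_1(t,0)=F_2(t,0)=p(t)$ and $\hat\Phi_T(t)$ to be the equivalence class of $F_1(t,\cdot)$. The key coherence point is that $(F_1(t,\cdot),F_2(t,\cdot))$ is a pair of equivalent discs for \emph{every} $t\in[0,1]$, not only at the endpoints: applying condition (2) of Definition \ref{Definiton E} to the restricted sub-homotopy on $[0,t]$ propagates equivalence from the initial pair to the pair at time $t$, which also shows that $\hat\Phi_T(t)$ does not depend on whether one uses $F_1$ or $F_2$. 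For procedure (iii), given trees $T_1,\dots,T_{N-1}$ previously built for the consecutive pairs $(d_k,d_{k+1})$, I would identify their roots into a single multiple vertex $v$, arranging the incoming edges counterclockwise in the prescribed label order and placing a future outgoing edge (if any) between the last and first incoming ones, thereby respecting the planar convention stated just before the lemma. All pairs $(d_k,d_{k+1})$ share one common center, so $\Phi_T(v)$ is unambiguous; transitivity of the equivalence relation places every $d_k$ in one class, so $\hat\Phi_T(v)$ is unambiguous as well, and the values agree with those carried up by each $\hat\Phi_{T_k}$ at its root.

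The main obstacle is not analytic but organizational: one has to check that every pair of equivalent $G_0$-discs actually arises through a \emph{finite} composition of these three moves---this is exactly Lemma \ref{Lemma 4}---and that the piecewise-defined maps fit together at multiple vertices in a way consistent both with the continuity of the individual ec-homotopies and with the planar ordering of edges. Continuity of $\Phi_T:T\to X^2$ is then immediate from continuity of the homotopies on edges and from the single well-defined value at each multiple vertex; the statement for $\hat\Phi_T$ is to be read in the sense that $\Phi_T=\hat{\mathcal{P}}\circ\hat\Phi_T$ and the assignment of equivalence classes is coherent along $T$---a statement that promotes to genuine continuity once $\hat G$ carries the Riemann domain topology provided by Theorem \ref{Theorem}.
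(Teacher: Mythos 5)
Your proposal is correct and follows essentially the same route as the paper: the paper's own proof consists precisely of the paragraphs preceding the lemma, which describe, move for move, how each of the three procedures of Lemma~\ref{Lemma 4} translates into a combinatorial modification of a planar rooted tree (a single vertex for (i), an attached edge for (ii), and root identification with counterclockwise ordering for (iii)), with the maps $\Phi_T$ and $\hat\Phi_T$ defined on each new piece just as you describe. You supply slightly more explicit detail than the paper on the well-definedness of $\hat\Phi_T$ (via the restricted sub-homotopy argument for (ii) and transitivity for (iii)), and your closing remark about the topology on $\hat G$ is a fair caveat, but the structure of the argument is the same.
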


\begin{figure}[h]
\centering
\includegraphics[scale=1.0]{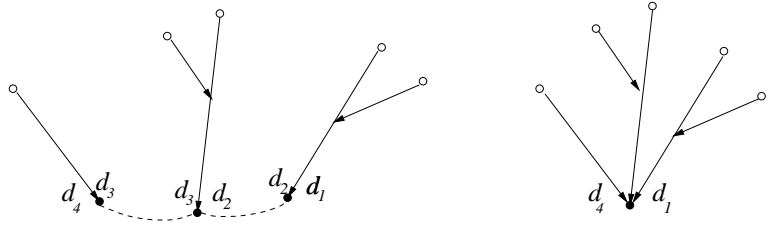}
\caption {Planar rooted trees associated to pairs of equivalent
discs (leaves indicated by white dots, roots by black dots)}
\end{figure}

Consider a planar tree $T$ that has a non-trivial edge. Its
complement $\hat {\C} \setminus T$ in the Riemann sphere is a simply
connected domain. Consider a conformal mapping $\phi : \D
\rightarrow \hat {\C} \setminus T$. The mapping $\phi$ extends
continuously to the closed disc $\overline \D$. Consider the
boundary curve $\phi :
\partial \D \rightarrow \C$ of the conformal mapping and reverse its
orientation. Note that this curve is the limit of the simple closed
curves $\phi (|z| = r), \; r<1,\; r \rightarrow 1$, oriented
suitably. The image of the limit curve is contained in the tree $T$.
We may think about the curve "surrounding the tree counterclockwise
along its sides." We have in mind that we associate to each edge of
the tree its left side and its right side (copies of the edge which
are the limit of its shifts to the left, respectively to the right,
when moving along the edge according to orientation; recall that
trees are oriented "towards the root").

\begin{definition}\label{Definition 3a}
For a planar tree $T$ the non-parametrized curve represented by the
curve $\phi (\partial \D)$ with reversed orientation is called the
pellicle of the tree $T$.

The punctured pellicle of the tree is obtained by removing from the
pellicle the point over the root and adding instead two endpoints
over the root.
\end{definition}

This means that the initial point
of the punctured pellicle is related to the tree in the following way. Consider
all edges of the tree adjacent to the root and have them labeled as
above, i.e. counterclockwise when traveling around the root. Take
the point over the root on the left side of the first labeled edge.
This is the initial point of the punctured pellicle of the tree.

Respectively, the terminating endpoint of the punctured pellicle is
the point over the root on the right side of the last labeled edge.

\begin{figure}[h]
\centering
\includegraphics[scale=0.5]{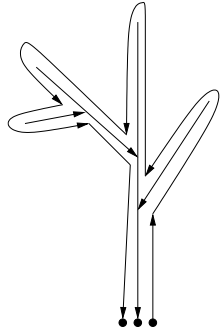}
\caption{A planar rooted tree $T$  and a curve approximating its
punctured pellicle}

\end{figure}

We will parametrize the punctured pellicle by an interval
(standardly it will be the unit interval $[0,1]$) with affine
parametrization on the sides of the edges. We denote the punctured
pellicle by $m_T :[0,1] \rightarrow \C $. The image of $m_T$ covers
the open edges of the tree $T$ twice and covers the vertices with,
maybe, higher multiplicity.

We need the following definitions.

\begin{definition}\label{Definition 3b1} Let $\alpha $ be a curve
in the plane and let $\Phi \circ \alpha$ be a curve in $X^2$. A
curve $\overset{\circ}{\alpha}$ in $\GN$ for which $\mathcal{P}_0
\circ \overset{\circ}{\alpha} = \Phi \circ \alpha$ is called a halo
assigned to $\alpha$ and $\Phi$.
\end{definition}

Notice that the halo is a continuously varying family of analytic
discs around points in the image of the curve $\mathcal{P}_0 \circ
\overset{\circ}{ \alpha}$ in $X^2$. The latter curve is the curve of
centers of the discs constituting the halo. The curve
$\overset{\circ}{\alpha}$ can be considered as a mapping with values
in $X^2$ of the trivial disc fibration over the curve $\alpha$. The
restriction of the mapping to the respective circle fibration has
values in $G$.

\begin{definition}\label{Definition 3b}
A planar rooted tree $T$ with punctured pellicle $m_T$ together with
a continuous mapping $\Phi _T : T \rightarrow X^2$ is called a
dendrite. The mapping $\Phi _T \circ m_T$ is called the punctured
pellicle of the dendrite (opposed to the punctured pellicle $m_T$ of
the underlying tree). If the mapping $\Phi _T \circ m_T$ lifts to a
mapping $\overset{\circ}{m}_T$ to $\GN$ (i.e. $\mathcal{P}_0 \circ
\overset{\circ}{m}_T  = \Phi _T \circ m_T$) we call
$\overset{\circ}{m}_T$  the punctured halo of the dendrite. The set
$(T,m_T,\Phi_T, \overset{\circ}{m}_T)$ is called a dendrite with
punctured halo and denoted by $\mathbf{T}$.
\end{definition}

Recall that for each point $\Phi _T \circ m_T(t)$ in the punctured pellicle of the
dendrite the value of the halo at this point is an analytic disc centered at this point.

Note that we do not require here that the tree is associated to a
pair of equivalent discs. In particular, we do not require that the values of
$\Phi$ at the leaves are contained in $G$ and the values of $\overset{\circ}{m}_T$
at the leaves are discs embedded into $G$.

The following lemma holds.

\begin{lemma}\label{Lemma 6} Let $(d_1,d_2)$ be a pair of equivalent $G_0$-discs.
Then there exists a dendrite $(T, m_T, \Phi _T, \overset
{\circ}{m}_T)$ with punctured halo $\overset {\circ}{m}_T$ such that
(for standard parametrization) $\overset {\circ}{m}_T (0)=d_1$ and
$\overset {\circ}{m}_T (1)=d_2$.

Moreover, at each of the leaves of the tree the value of
$\overset{\circ}{m}_T$ is an analytic disc of small diameter embedded into $G$
and its center, the value of $\Phi _T \circ m_T$, is a point in $G$.

Further, there is a lift $\hat \Phi_T : T \rightarrow \hat G$ of
$\Phi _T$, $\hat{\mathcal{P}} \circ \hat \Phi_T = \Phi_T$, such that
$\hat{\mathcal{P}}_0 \circ \overset {\circ}{m}_T  =  \hat \Phi_T
\circ m_T$.

\end{lemma}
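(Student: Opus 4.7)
The plan is to induct on the total number of applications of procedures (i)--(iii) from Lemma~\ref{Lemma 4} used to produce the pair $(d_1,d_2)$. Lemma~\ref{Lemma 5} already supplies the underlying planar rooted tree $T$ together with the continuous assignment $\hat{\Phi}_T:T\rightarrow\hat G$ of equivalence classes (and hence $\Phi_T=\hat{\mathcal{P}}\circ\hat{\Phi}_T$). What remains at each step of the induction is to build the punctured halo $\overset{\circ}{m}_T$ along the punctured pellicle with the prescribed boundary values at $0$ and $1$, to check that it is continuous, and to verify the compatibility $\hat{\mathcal{P}}_0\circ\overset{\circ}{m}_T=\hat{\Phi}_T\circ m_T$.

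In the base case (procedure (i)) the tree is a single vertex, both $d_1$ and $d_2$ are small equally centered embedded discs in $G$, and I take $\overset{\circ}{m}_T$ to be any continuous path in $\GN$ from $d_1$ to $d_2$ through small discs embedded in $G$ with the common center; all such discs are mutually equivalent by condition~(1) of Definition~\ref{Definiton E}. For procedure (ii), given by induction the dendrite-with-halo of the old pair $(F_1(0,\cdot),F_2(0,\cdot))$ and the $ec$-homotopy $t\mapsto(F_1(t,\cdot),F_2(t,\cdot))$ ending in $(d_1,d_2)$, I attach an edge to the old root; the new punctured pellicle first descends the left side of this edge, then traverses the old pellicle, then ascends the right side. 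I define the halo as $F_1(1-t,\cdot)$ on the left side, the old halo in the middle, and $F_2(t,\cdot)$ on the right side. The two matching conditions at the old root read $F_1(0,\cdot)$ and $F_2(0,\cdot)$, precisely the endpoints of the old halo, so continuity is preserved and the boundary conditions at the leaves are inherited from the inductive hypothesis.

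For procedure (iii) I glue the dendrites for the consecutive pairs $(d_k,d_{k+1})$ at a common root placed counterclockwise, and I concatenate the punctured pellicles and halos. Continuity at each interior junction holds because the terminal halo value of $(d_k,d_{k+1})$ is $d_{k+1}$, which is also the initial halo value of $(d_{k+1},d_{k+2})$. To establish the compatibility observe that the discs assigned by the halo to opposite sides of the same edge at the same height are the two members $F_1(s,\cdot)$ and $F_2(s,\cdot)$ of a single $ec$-pair, hence equivalent, while the discs appearing at a multiple vertex are equivalent by the transitivity of the equivalence relation. Consequently $\hat{\mathcal{P}}_0\circ\overset{\circ}{m}_T:[0,1]\rightarrow\hat G$ factors continuously through $T$ via $m_T$, and this factorization serves as the $\hat{\Phi}_T$ asserted in Lemma~\ref{Lemma 5}.

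The step I expect to require the most care is the consistency check at the multiple vertices of $T$: one must verify that the halo value obtained by approaching the vertex along the right side of the $k$-th incoming edge coincides, as a point of $\hat G$, with the one obtained along the left side of the $(k{+}1)$-st incoming edge, and that the disc-valued map itself is continuous across all such traversals. This is precisely what the chaining in procedure (iii), $(d_1,d_2),(d_2,d_3),\ldots,(d_{N-1},d_N)$, is designed to enforce: the shared disc $d_{k+1}$ appears on both sides of the transition, so the halo stays well defined and continuous in $\GN$, and its image in $\hat G$ agrees by the transitivity of equivalence. Once this bookkeeping is in place, everything else reduces to continuity of the halo along each individual $ec$-homotopy.
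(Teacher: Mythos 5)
Your proposal is correct and follows essentially the same approach as the paper: induct over the procedures of Lemma~\ref{Lemma 4}, assign the first member's homotopy to the left side and the second member's to the right side of each edge, and match the halos at the shared roots under procedure~(iii). The only differences are in presentation — the paper's base case normalizes the homotopy near the leaf so the pair at the leaf consists of two equal discs and leaves procedure~(ii) and the compatibility $\hat{\mathcal{P}}_0\circ\overset{\circ}{m}_T=\hat\Phi_T\circ m_T$ "to the reader", whereas you handle procedure~(i) as a degenerate tree and spell out (ii) and the compatibility explicitly — but these are not genuinely different routes.
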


A dendrite with the properties described in Lemma 5 is said to be
associated to the pair $(d_1,d_2)$ of equivalent discs.

\medskip

\noindent {\it Proof of Lemma 5}. Let $T$ be the planar rooted tree
associated to the pair $(d_1,d_2)$ by Lemma 4. Let $\Phi _T $ be the
mapping from the tree into $X^2$ defined in that Lemma. We want to
show that for the punctured pellicle $m_T$ of the tree $T$ the
mapping $\Phi _T \circ m_T$ lifts to a continuous mapping $\overset
{\circ}{m}_T$ with $\overset {\circ}{m}_T (0)=d_1$ and $\overset
{\circ}{m}_T (1)=d_2$.

Recall that edges of the tree $T$ correspond to homotopies of
(ordered) $ec$-pairs. A homotopy of pairs of $G_0$-discs consists of
two homotopies of $G_0$-discs, namely the homotopies defined by the
first labeled, respectively second labeled, discs. Assign the first
homotopy of $G_0$-discs to the left side (i.e. to the first side
when surrounding the edge counterclockwise starting from the root),
and the second homotopy to the right side of the edge.

The statement of the lemma can be proved by induction using the
successive procedure of construction described in lemma 3.

First we consider trees consisting of an edge adjacent to a leaf.
Change slightly those pairs of discs which correspond to points
close to the leaf so that the pair associated to the leaf itself
consists of two equal discs. Then the above described procedure
gives a continuous mapping from the punctured pellicle of the edge
into the set of $G_0$-discs with the desired values at the sides
over the root. The value of the punctured halo at the leaf is a
small disc embedded into $G$.

In the case corresponding to procedure (iii) there are several
rooted trees $T_j,\;J=1,...,N-1$, and we assume that for each tree
$T_j$ there is a continuous lift $\overset {\circ}{m}_{T_j}$ of
$\Phi_{T_j} \circ m_{T_j}$ to $\mathcal G_0$ which coincides at the
left, respectively right sides over the roots with $d_j$,
respectively $d_{j+1}$. The trees are glued together at their root
and placed in the plane counterclockwise around the common root. The
punctured pellicle of the new tree is obtained by gluing the right
side over the root of $T_j$ to the left side over the root of
$T_{j+1}$. It is clear now that the values of the
punctured halo of the trees $T_j$ match so that for the new tree $T$
we obtain a continuous lift of $\Phi_{T} \circ m_{T}$ into $\GN$. At
the leaves the halo takes values in the set of small analytic discs
embedded into $G$.

The general case corresponding to (ii) is easier and left to the
reader. \hfill  $\square$

\medskip

\begin{figure}[h]
\centering
\includegraphics[scale=1.0]{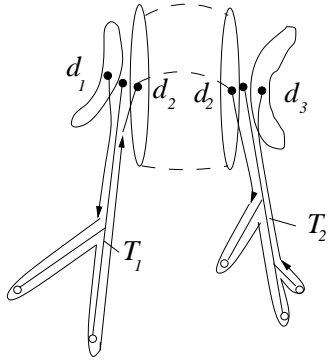}
\caption{Matching the halo at common endpoints of punctured
pellicles of two trees}
\end{figure}

\medskip

We will identify rooted trees realized as subsets of $\C$ if there
is a piecewise affine homeomorphism of the plane mapping one tree to
the other fixing the root and mappings edges (i.e. straight line
segments joining vertices) to edges. We will identify the
parametrized punctured pellicle and halo of such trees if they are
obtained by precomposing with the mentioned homeomorphism.

We will not distinguish between different parametrizations of the
pellicles and of the halo for a given embedding of a tree into $\C$
if the parametrization does not play a role.

\medskip
\section{Plan of Proof of the theorem}

The proof of the theorem is divided into three steps according to the
following propositions.

\begin{proposition}\label{Proposition 1}
The set of equivalence classes of $G_0$-discs can be equipped with
the structure of a connected Riemann domain $(\hat G, \hat
{\mathcal{P}})$ over $X^2$. The projection  $\hat
{\mathcal{P}}$ associates to each equivalence class its center.
There is a natural embedding $\hat i: G \rightarrow \hat G$ of
$G$ into $\hat G$, such that $\hat
{\mathcal{P}} \circ \hat i = id$ on $G$.
\end{proposition}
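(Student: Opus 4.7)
The plan is to realize $\hat G$ as the quotient set $\GN/\!\sim$ and to equip it with a complex structure pulled back from $X^2$ via explicit local sections of the projection $[d] \mapsto d(0)$. I will carry this out in three substantive steps: building the charts, proving that they are compatible (which is the main point), and then reading off the rest of the statement.

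At the set-theoretic level I let $\hat G = \GN/\!\sim$, define $\hat{\mathcal{P}} : \hat G \to X^2$ by $[d] \mapsto d(0)$ (well defined because Definition 3 only relates equally centered discs), and define $\hat i: G \to \hat G$ by sending $p$ to the class of small analytic discs embedded into $G$ and centered at $p$ (well defined by condition (1) of Definition 3). To build charts, fix $d \in \GN$ with center $p$. Since $d$ is a holomorphic immersion, on a small enough neighborhood $U_d$ of $p$ in $X^2$ there is a local inverse $q \mapsto z_q$ of $d$ near $0 \in \disc$, and I set
\[
\sigma_d : U_d \to \hat G, \qquad \sigma_d(q) = [d \circ \varphi_{z_q}],
\]
where $\varphi_{z_q}$ is an automorphism of $\disc$ sending $0$ to $z_q$. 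Each $d \circ \varphi_{z_q}$ lies in $\GN$ because it is joined to $d$ through the continuous path $d \circ \varphi_{tz_q}$ in $\G$ and $\GN$ is a connected component of $\G$, and by construction $\hat{\mathcal{P}} \circ \sigma_d = \mathrm{id}_{U_d}$. I declare the topology on $\hat G$ to be the one in which every $\sigma_d$ is a homeomorphism onto an open set.

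The heart of the argument is the compatibility of these charts, which is a transport-of-equivalence statement: if $d_1 \sim d_2$ share the center $p$, then $\sigma_{d_1} = \sigma_{d_2}$ on a neighborhood of $p$. Given $q$ close to $p$, I would choose a path $\gamma$ from $p$ to $q$ in $U_{d_1} \cap U_{d_2}$, lift it via $d_1$ and $d_2$ to paths $z_1(t), z_2(t)$ in $\disc$ starting at $0$, and form the homotopy
\[
t \mapsto \bigl( d_1 \circ \varphi_{z_1(t)},\; d_2 \circ \varphi_{z_2(t)} \bigr), \qquad t \in [0,1].
\]
Both discs in this pair have common center $\gamma(t)$, so this is a homotopy of $ec$-pairs; since the initial pair $(d_1,d_2)$ is equivalent, condition (2) of Definition 3 forces the terminating pair to be equivalent as well, which is exactly $\sigma_{d_1}(q) = \sigma_{d_2}(q)$. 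Running the same construction with time reversed delivers the Hausdorff property: if $\sigma_{d_1}$ and $\sigma_{d_2}$ agree at some $q$ near $p$, transport back to $p$ through $ec$-pairs shows $(d_1,d_2)$ is equivalent, contradicting $[d_1] \ne [d_2]$. This transport step is where I expect the main difficulty to lie, and it is precisely where the specific structure of Definition 3 (closure under $ec$-homotopies, not merely under some naive relation) is essential.

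Once compatibility is established the remainder is routine. Pulling back the complex structure of $X^2$ through the local inverses $\hat{\mathcal{P}}|_{\sigma_d(U_d)}$ makes $\hat G$ into a complex manifold on which $\hat{\mathcal{P}}$ is locally biholomorphic. The map $\hat i$ is continuous and injective, with open image that near $p$ coincides with $\sigma_{d_p}(V)$ for $d_p$ a small embedded disc in $G$ centered at $p$ and $V$ a small neighborhood of $p$ in $G$; together with $\hat{\mathcal{P}} \circ \hat i = \mathrm{id}$ this makes $\hat i$ an embedding. Connectedness of $\hat G$ follows from Definition 2: any $d \in \GN$ is joined by a continuous family in $\GN$ to an embedded disc of arbitrarily small diameter in $G$, and applying the quotient map produces a path in $\hat G$ from $[d]$ to $\hat i(G)$, which is itself connected because $G$ is.
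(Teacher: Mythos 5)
There is a genuine gap in the construction of the charts, and it is a dimension count that the proposal misses. An analytic disc $d:\overline\D\looparrowright X^2$ maps the one-complex-dimensional disc into the two-complex-dimensional manifold $X^2$, so its image near $p=d(0)$ is a smooth complex \emph{curve}, not an open set. There is no ``local inverse $q\mapsto z_q$ of $d$ near $0\in\D$'' defined on a neighborhood $U_d$ of $p$ in $X^2$: such an inverse exists only on the image $d(\D)$ near $p$. Consequently, the reparametrized discs $d\circ\varphi_{z_q}$ have centers ranging only over that curve, and the proposed section $\sigma_d$ can be defined at most on a one-dimensional subset of $X^2$. It therefore cannot furnish a chart that makes $\hat G$ a two-dimensional complex manifold with $\hat{\mathcal{P}}$ locally biholomorphic onto open subsets of $X^2$, which is what ``Riemann domain over $X^2$'' requires. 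The same problem recurs in the remark that $\hat i(G)$ near $p$ ``coincides with $\sigma_{d_p}(V)$'' for $V$ an open neighborhood of $p$ in $G$.

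What is missing is a second, transversal parameter. The paper fixes this by associating to each representative $d$ a two-parameter Riemann domain $\mathcal R_d=(V_d,F_d)$ with $V_d\cong(1+\eps)\D\times\delta\D$, $F_d(z_1,0)=d(z_1)$, and the second coordinate moving along the flow of a holomorphic vector field transversal to $d$. The discs $F_d|\overline\D\times\{z_2\}$ are all $G_0$-discs, and their centers sweep out an open set $Q_d\ni p$ in $X^2$, which is what yields genuine two-dimensional charts. With that correction, your transport-of-equivalence argument (homotoping equally centered pairs along a path $\gamma$ from $p$ to $q$ and invoking condition (2) of Definition~3) is essentially the paper's Lemma~7 and handles both compatibility and Hausdorffness, and your connectedness argument agrees in spirit with the paper's use of Lemmas~8 and~9. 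But as written, the charts you define do not have open image, so the complex structure is not actually produced.
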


\begin{proposition}\label{Proposition 2}
For each analytic function on $G$ its push-forward to $\hat i (G)$
extends to an analytic function on $\hat G$.
\end{proposition}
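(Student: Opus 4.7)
My proposal is to construct $\hat f$ on $\hat G$ by factoring through the classical envelope of holomorphy $\tilde G$, exploiting Lemmas 1 and 2 already proved above.

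First, define a map $\pi:\hat G\to\tilde G$ by $\pi([d])\defined\tilde d(0)$, where $d$ is any $G_0$-disc representing $[d]$ and $\tilde d:\overline\D\looparrowright\tilde G$ is its canonical lift produced by Lemma 1. Lemma 2 guarantees that $\tilde d(0)$ depends only on the equivalence class $[d]$, so $\pi$ is well defined. Since $\mathcal P\circ\tilde d=d$, one has $\mathcal P(\pi([d]))=d(0)=\hat{\mathcal P}([d])$, i.e.\ $\mathcal P\circ\pi=\hat{\mathcal P}$.

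Next I would verify that $\pi$ is holomorphic. Continuity of $\pi$ follows from continuous dependence of the lift $\tilde d$ on $d$ in the $C^1$ topology, a property built into the construction in Lemma 1 (which produces $\tilde d$ via analytic continuation along the $G$-homotopy realising $d$ as a $G_0$-disc; a small perturbation of $d$ yields a small perturbation of the accompanying homotopy and hence of the lift). Once continuity is granted, the relation $\mathcal P\circ\pi=\hat{\mathcal P}$ combined with the fact that $\hat{\mathcal P}$ and the classical projection $\mathcal P:\tilde G\to X^2$ are local biholomorphisms exhibits $\pi$ locally as $\mathcal P^{-1}\circ\hat{\mathcal P}$, so $\pi$ is a local biholomorphism, in particular holomorphic.

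The classical envelope of holomorphy $\tilde G$ carries a unique analytic extension $\tilde f\in\O(\tilde G)$ of the push-forward of $f$ through $i$. Set $\hat f\defined\tilde f\circ\pi\in\O(\hat G)$. For $p\in G$, the point $\hat i(p)$ is represented by a small analytic disc $d$ embedded in $G$ and centred at $p$; the corresponding lift is simply $\tilde d=i\circ d$, so $\pi(\hat i(p))=\tilde d(0)=i(p)$ and $\hat f(\hat i(p))=\tilde f(i(p))=f(p)$. Thus $\hat f$ extends the push-forward of $f$, and the construction is complete.

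The main obstacle is the verification of continuity of $\pi$. This rests on a careful inspection of the construction in Lemma 1 to ensure that the lift $\tilde d$ depends $C^1$-continuously on $d\in\GN$; once this dependence is in hand, the remaining steps (holomorphy of $\pi$, holomorphy and compatibility of $\hat f$) are formal consequences of the local biholomorphism structure.
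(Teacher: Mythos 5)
Your proposal is correct and takes essentially the same approach as the paper: the paper defines the identical map (called $\rho$ there) from $\hat G$ to $\tilde G$ via Lemmas 1 and 2, observes that it intertwines the projections and is hence locally biholomorphic, and pulls back the extension $\tilde f$ to obtain $\hat f$. You are somewhat more explicit than the paper about why $\pi\circ\hat i=\tilde i\circ\mathrm{id}_G$ and about the continuity of the lift, but the argument is the same.
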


The most subtle part of the proof of the theorem is the following
proposition.

\begin{proposition}\label{Proposition 3}
The Riemann domain $\hat G$ is pseudoconvex.
\end{proposition}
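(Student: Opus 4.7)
The strategy is to verify pseudoconvexity of $\hat G$ through the continuity principle for Riemann domains over a Stein manifold. Since $\hat G$ is already a Riemann domain over the Stein surface $X^2$ by Proposition~1, the theorem of Docquier--Grauert reduces pseudoconvexity to the following lifting property: whenever $\{f_t\}_{t\in[0,1]}$ is a continuous family of analytic discs $f_t:\overline{\D}\to X^2$ such that $f_0$ lifts to an analytic disc $\hat f_0:\overline{\D}\to\hat G$ and the boundary family $f_t|_{\partial\D}$ admits a continuous lift $\hat f_\partial:[0,1]\times\partial\D\to\hat G$ extending $\hat f_0|_{\partial\D}$, the entire family lifts continuously to maps $\hat f_t:\overline{\D}\to\hat G$ with $\hat{\mathcal{P}}\circ\hat f_t=f_t$. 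I will verify this by an openness-closedness argument on the set $T\subset[0,1]$ of admissible parameters.

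Openness of $T$ is immediate from the local biholomorphicity of $\hat{\mathcal{P}}$ given by Proposition~1: near a parameter at which a lift exists, nearby analytic discs that already agree with the prescribed boundary lift can be lifted coherently by inverting $\hat{\mathcal{P}}$ in local charts. The essential content lies in closedness. Given $t_n\in T$ with $t_n\to t_*$, I must construct $\hat f_{t_*}$. For each $z\in\overline{\D}$ the intended value $\hat f_{t_*}(z)$ is the equivalence class of a $G_0$-disc $d_{t_*,z}$ centred at $f_{t_*}(z)$, and this disc must be assembled from data in $X^2$, without prior access to $\hat G$ over the relevant region.

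The construction of $d_{t_*,z}$ uses the dendrite apparatus of Section~2. Choose, continuously in $(t,\xi)$, a halo $\{e_{t,\xi}\}_{\xi\in\partial\D}$ of $G_0$-discs representing the prescribed boundary lift $\hat f_\partial$. The reparametrized disc $f_{t_*}\circ\varphi_z$, where $\varphi_z\in\mathrm{Aut}(\D)$ sends $0$ to $z$, is an analytic disc in $X^2$ with centre $f_{t_*}(z)$ and boundary traced in $\hat{\mathcal{P}}(\hat G)$. Combined with the halo $\{e_{t_*,\xi}\}$ attached along the boundary circle, it realizes the punctured pellicle and punctured halo of a dendrite whose leaves correspond to the small embedded $G$-discs witnessing that each $e_{t_*,\xi}$ is a $G_0$-disc. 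The constructions of Section~2, together with Lemma~\ref{Lemma 4} and Lemma~\ref{Lemma 6}, then produce $d_{t_*,z}$ and its equivalence class; continuity in $z$ is inherited from the continuous dependence of the dendrite data, while compatibility with the previously constructed lifts $\hat f_{t_n}(z)$ for large $n$ follows from the invariance of equivalence classes under the dendrite homotopies afforded by $t_n\to t_*$.

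The main obstacle will be this last construction, because $f_{t_*}\circ\varphi_z$ is itself \emph{not} a $G$-disc---its boundary lies in $\hat{\mathcal{P}}(\hat G)\subset X^2$ rather than in $G$. The halo is precisely the device for converting it, via the dendrite formalism, into a legitimate equivalence class of $G_0$-discs, and the delicate point is verifying that the class so produced is independent both of the choice of halo representative and of the particular dendrite realisation, so that $z\mapsto[d_{t_*,z}]$ is a well-defined continuous lift. Once this invariance is secured, closedness of $T$ follows, hence $T=[0,1]$ and the continuity principle for $\hat G$ holds, establishing Proposition~3.
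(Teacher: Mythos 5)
Your reduction to a Hartogs-type lifting statement for one-parameter families of analytic discs is consistent with what the paper does: Proposition~3 is reduced (via Docquier--Grauert $p^*_7$-convexity and a rotation argument) to Proposition~3$''$, which is precisely a statement about lifting a family $\Psi:[0,1]\times\overline\D\to X^2$ given a lift over $\mathfrak{c}=([0,1)\times\overline\D)\cup([0,1]\times\partial\D)$. Your openness claim is also sound: local biholomorphicity of $\hat{\mathcal{P}}$ together with uniqueness of lifts gives openness in the usual covering-space manner. The gap is in the closedness step, which you correctly identify as the ``main obstacle'' but then do not actually overcome.

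Concretely, Lemma~\ref{Lemma 4} and Lemma~\ref{Lemma 6} are about the internal structure of the equivalence relation on $\GN$: Lemma~\ref{Lemma 4} characterizes when two $G_0$-discs are equivalent, and Lemma~\ref{Lemma 6} assigns a dendrite with punctured halo to a pair of \emph{already equivalent} $G_0$-discs. Neither lemma produces a $G_0$-disc centred at a prescribed interior point $f_{t_*}(z)$ out of the data ``$f_{t_*}\circ\varphi_z$ plus a halo along $\partial\D$'': that data is not a tree, and the disc $f_{t_*}\circ\varphi_z$ is not a $G$-disc, so the dendrite/pellicle formalism does not directly apply to it. What the paper does here is a genuinely different construction. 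It first reduces to the case where the initial disc $\Phi_0$ is small and embedded in $G$ (Lemma~\ref{Lemma 11}); it then views the disc-plus-halo as a \emph{neuron}, fattens its dendrites via Mergelyan approximation (Lemma~\ref{Lemma 15}), and solves an approximate Riemann--Hilbert boundary value problem along the resulting torus of boundary circles (Lemmas~\ref{Lemma 16}, \ref{Lemma 16a}) to manufacture true $G$-discs through interior points. Finally, to see that these $G$-discs are in fact $G_0$-discs representing the correct equivalence class one runs the construction through a \emph{continuous family} of neurons, which requires a further homotopy device (the ``peeling'' Lemmas~\ref{Lemma 18} and~\ref{Lemma 19}) to make a piecewise-continuous family of neurons into a genuinely continuous one. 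Your proposal replaces all of this with the sentence ``the constructions of Section~2 then produce $d_{t_*,z}$,'' which they do not: without the normalisation of Lemma~\ref{Lemma 11} there is no anchoring of the constructed discs in the constant component $\GN$, and without the Mergelyan/Riemann--Hilbert machinery there is no way to pass from a halo over $\partial\D$ (a union of boundary circles, a codimension-one object) to an actual $G_0$-disc through an interior point. The subsequent claim that continuity and well-definedness of $z\mapsto[d_{t_*,z}]$ ``follow from the invariance of equivalence classes under the dendrite homotopies'' is precisely the content of Lemma~\ref{Lemma 13} and cannot be asserted without the peeling construction. So the framework is right, but the step carrying the entire weight of the proposition is left unproved.
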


The concept of pseudoconvexity of Riemann domains over $\mathbb C
^n$ goes back to Oka (\cite{Oka}). Oka showed that pseudoconvex
Riemann domains over $\mathbb C^n$ are holomorphically convex (i.e.
hulls of compacts with respect to analytic functions on the Riemann
domain are compact.) In the paper \cite{DoGr} the notion of
pseudoconvexity of an arbitrary complex manifold is introduced.
Moreover, the authors present several equivalent characterizations
of pseudoconvexity and extend Oka's result to Riemann domains over
arbitrary Stein manifolds. Together with results of Grauert
(\cite{Grau}) this implies the following theorem.

\begin{thmdgo} \label{theoremDGO}
A pseudoconvex Riemann domain over a Stein manifold is a Stein manifold.
\end{thmdgo}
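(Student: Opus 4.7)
The plan is to reduce the statement to Grauert's characterization: a complex manifold is Stein if and only if it admits a $C^\infty$ strictly plurisubharmonic exhaustion function. Thus, writing $\mathcal{P}:\hat G \to X^n$ for the projection of the pseudoconvex Riemann domain onto the Stein base, I would produce such an exhaustion on $\hat G$ by combining two ingredients: a function controlling the behaviour ``at infinity of the base'' and a function controlling the behaviour ``at the boundary over the base'' (where $\hat G$ could fail to be closed as a Riemann domain).

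First, since $X^n$ is Stein, Grauert's theorem supplies a $C^\infty$ strictly plurisubharmonic exhaustion $\psi:X^n\to\mathbb R$. Because $\mathcal{P}$ is locally biholomorphic, the pull-back $\psi\circ \mathcal{P}$ is a $C^\infty$ strictly plurisubharmonic function on $\hat G$. It controls points of $\hat G$ whose projection leaves every compact subset of $X^n$, but it is \emph{not} an exhaustion of $\hat G$: a sequence $\{p_k\}\subset \hat G$ with $\mathcal{P}(p_k)$ bounded in $X^n$ and yet no limit in $\hat G$ is possible, and $\psi\circ\mathcal{P}$ will not see this.

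Next, I would use the pseudoconvexity hypothesis, in the Docquier--Grauert sense, which precisely provides the missing ingredient. Embed $X^n$ properly holomorphically into some $\mathbb{C}^N$ (using that $X^n$ is Stein), so that the Euclidean distance to the ``boundary'' of $\hat G$ relative to $X^n$ becomes a well-defined lower semicontinuous function $d_{\hat G}$ on $\hat G$. Pseudoconvexity of $\hat G$ is equivalent to $-\log d_{\hat G}$ being plurisubharmonic on $\hat G$; in particular, there is a continuous plurisubharmonic function $\varphi$ on $\hat G$ with $\varphi(p_k)\to+\infty$ along any sequence that fails to converge in $\hat G$ while $\mathcal{P}(p_k)$ remains in a compact subset of $X^n$. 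The function $\rho=\psi\circ \mathcal{P}+\varphi$ is then plurisubharmonic and is an exhaustion of $\hat G$: for the two possible ways a sequence can ``escape'' $\hat G$, one of the two summands blows up. Smoothing $\varphi$ by the standard Richberg-type approximation (locally, using partitions of unity and convolutions transported by the local biholomorphism $\mathcal{P}$) yields a $C^\infty$ plurisubharmonic exhaustion, and after adding a small multiple of $\psi\circ \mathcal{P}$ (which is strictly plurisubharmonic) the sum becomes $C^\infty$ strictly plurisubharmonic while remaining an exhaustion. Grauert's solution of the Levi problem then concludes that $\hat G$ is Stein.

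The main obstacle is the construction and smooth approximation of the ``boundary'' plurisubharmonic exhaustion $\varphi$: the distance $d_{\hat G}$ is only lower semicontinuous, and showing plurisubharmonicity of $-\log d_{\hat G}$ for a Riemann domain over a general Stein manifold (rather than over $\mathbb{C}^n$, where it goes back to Oka) is exactly the technical content of Docquier--Grauert. All remaining steps—pulling back $\psi$, summing the two plurisubharmonic contributions, Richberg-smoothing, and invoking Grauert's characterization of Stein manifolds by strictly plurisubharmonic exhaustions—are then standard.
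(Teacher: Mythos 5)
The paper does not prove Theorem DGO itself; it cites it as the combination of Oka's result (pseudoconvex Riemann domains over $\mathbb{C}^n$ are holomorphically convex), Docquier--Grauert's extension of this to Riemann domains over Stein manifolds, and Grauert's characterization of Stein spaces. Your sketch takes a genuinely different, more modern route: you build a $C^\infty$ strictly plurisubharmonic exhaustion on $\hat G$ directly (pulling $\psi$ back from the base to control escape to infinity, using $-\log d_{\hat G}$ to control escape to the boundary over the base), and then invoke Grauert's solution of the Levi problem. Both routes are standard and both rest on Docquier--Grauert for the hard step of showing that the Docquier--Grauert pseudoconvexity conditions yield a plurisubharmonic boundary-distance function over a Stein base. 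The route the paper cites pivots on holomorphic convexity, and one must add the observation that a Riemann domain carries no compact positive-dimensional analytic sets, so that holomorphic convexity alone already gives Stein; your route pivots on exhaustion functions and bypasses that observation, at the cost of a regularity discussion.

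One technical point is out of order in your sketch. A merely plurisubharmonic function cannot in general be smoothed on a complex manifold by partitions of unity and transported convolutions — the cutoff functions destroy plurisubharmonicity. The standard remedy is the regularized-maximum trick, which is exactly Richberg's theorem, and Richberg applies to continuous \emph{strictly} plurisubharmonic functions, because the strictness is what absorbs the errors introduced by gluing. You should therefore first add the small multiple of $\psi\circ\mathcal{P}$, obtaining a continuous strictly plurisubharmonic exhaustion, and only then smooth; as written you smooth $\varphi$ first and add the strict term afterwards. The fix is cosmetic, and with that reordering the strategy is sound.
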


This theorem shows, in particular, that holomorphic functions on
pseudoconvex Riemann domains separate points (see \cite{Grau} and
\cite{DoGr}).

\medskip

\noindent {\it The three propositions imply the Theorem
\ref{Theorem}.} Indeed, propositions \ref{Proposition 1} and
\ref{Proposition 3} show that the set of equivalence classes of
$G_0$-discs can be equipped with the structure of a Riemann domain
$(\hat G,\; \hat {\mathcal{P}})$ over $X^2$, and moreover $\hat G$
is a  Stein manifold. Proposition 2 shows that $\hat G$ is a
holomorphic extension of $G$ (see \cite{H} , chapter 5..4).
Therefore $\hat G$ coincides (up to a holomorphic isomorphism) with
the envelope of holomorphy $\tilde G$ (see \cite{H} , theorem
5.4.3). \hfill $\square$

\medskip

We will provide now proofs of the propositions.

\medskip

\noindent {\it Proof of Proposition 1.} We start with the
construction of a complex atlas on the set of equivalence classes of
$G_0$-discs. Take an equivalence class $\hat d$ and choose a
representative $d \in \hat d$. Denote the point $d(0) \in X^2$ by
$p$. Associate to $d$ a Riemann domain $\mathcal R _d = (V_d, F_d)$
over $X^2$ such that $d$ lifts to it as an embedded disc and,
moreover, $\mathcal R _d$ is foliated by analytic discs close to the
lifted one. Such a Riemann domain may be constructed in a standard
way. Take a small tubular neighbourhood $V_d = (1+\eps)\disc \times
\delta \D$ of $\overline {\disc} \times \{ 0\}$ in $\mathbb C^2$.
Here $\eps > 0$, $\delta > 0$ are small numbers. Put
$F_d(z_1,0)=d(z_1)$, $|z_1|< 1 + \eps$, and choose a holomorphic
vector field $\mathcal V: V_d \rightarrow TX^2 $ such that $\mathcal
V |(1+\eps)\D \times \{0\}$ is transversal to $F_d(z,0),\, z\in
(1+\eps)\disc$. Denote by $\Phi$ its flow. Then, taking
$F_d(z_1,z_2)= \Phi_{z_2}(F_d(z_1,0))$ and shrinking the Riemann
domain $(V_d, F_d)$ if necessary, we arrive at a Riemann domain that
has the required properties. For each $z_2$, $|z_2|<\delta$, the
analytic disc $F_d \mid \overline \disc \times \{z_2\}$ is a
$G_0$-disc since the central disc $d$ is a $G_0$-disc.

Consider now the set of equivalence classes of $G_0$-discs. Take an
arbitrary element $\hat d$ of this set, choose a representative $d$
and associate to it a Riemann domain $\mathcal R _d $. We want to
define a Euclidean set in the set of equivalence classes that
contains $\hat d$. For this purpose we use the discs of the
foliation of $\mathcal R _d $ in the following way. Choose a
neighbourhood $N_d$ of zero in $V_d$ so that $F_d$ is biholomorphic
from $N_d$ onto a neighbourhood $Q_d$ of $p$ in $X^2$. Associate to
each point $q\in Q_d$ the unique disc $d^q$ of the foliation of
$\mathcal R _d$ which passes through $q$, normalized so that $q$
becomes its center. Take the equivalence class $ {\hat d} ^q$ which
is represented by $d^q$. Define the set ${\hat N}^d = \{ {\hat d}^q:
q \in Q_d \}$ and the mapping $ \hat {\mathcal {P}}_d: {\hat N}^d
\rightarrow Q_d,\; \hat {\mathcal{P}}_d ({\hat d} ^q)=q$. Call this
set a standard neighbourhood of $\hat d$ associated to the
representative $d \in \hat d$, the Riemann domain $\mathcal{R}_d$
and the set $Q_d$. Call $\hat {\mathcal{P}}_d$ the related standard
projection.

The following lemma implies that standard neighbourhoods form a basis of a Hausdorff
topology in the set of equivalence classes of $G_0$-discs.

\begin{lemma}\label{Lemma 7}
Let $\hat d_1$, and $\hat d_2$ respectively, be equivalence classes
of $G_0$-discs. Suppose $\hat N_1$ and $\hat N_2$ are standard
neighbourhoods of $\hat d_1$ and $\hat d_2$, respectively, and $\hat
{\mathcal{P}}_1: \hat N_1 \rightarrow Q_1$ and $\hat
{\mathcal{P}}_2: \hat N_2 \rightarrow Q_2$ are the related standard
projections onto the open subsets $Q_1$ and $Q_2$ of $X^2$. Suppose
$\hat N_1$ and $\hat N_2$ intersect. Let $\hat d$ be a point in
their intersection, hence $\hat {\mathcal{P}}_1 (\hat d)=\hat
{\mathcal{P}}_2 (\hat d)$. Denote the latter point by $p$. It is
contained in $Q_1 \cap Q_2$.

Then $\hat N_1$ and $\hat N_2$ intersect over the whole connected
component $Q^p$ of the intersection $Q_1 \cap Q_2$ which contains
$p$. In other words, for $q \in Q^p$ the inclusion $\hat
{\mathcal{P}}_1 ^{-1}(q)=\hat {\mathcal{P}}_2 ^{-1}(q) \subset \hat
N_1 \cap \hat N_2 $ holds.
\end{lemma}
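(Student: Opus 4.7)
The plan is to apply condition~(2) of Definition~\ref{Definiton E} to a homotopy of $ec$-pairs parametrized by a path in $Q^p$ from $p$ to an arbitrary target point $q$. Since $Q^p$ is a connected component of the open subset $Q_1\cap Q_2$ of the manifold $X^2$, it is open and therefore path-connected; pick any continuous path $\gamma:[0,1]\to Q^p$ with $\gamma(0)=p$ and $\gamma(1)=q$.

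For $i=1,2$ the Riemann domain $\mathcal R_{d_i}=(V_{d_i},F_{d_i})$ associated to the chosen representative $d_i\in\hat d_i$ is, by construction, foliated by analytic discs, and the standard neighbourhood $\hat N_i$ is obtained by assigning to each $r\in Q_i$ the (normalized) foliation disc $d_i^r$ with centre $r$. Set
\[
F_i(t,z)\;=\;d_i^{\gamma(t)}(z),\qquad t\in[0,1],\ z\in\overline{\D}.
\]
Holomorphy of the transverse flow $\Phi$ used to build $F_{d_i}$ makes $F_i$ jointly continuous in $(t,z)$; since the central disc $d_i$ is a $G_0$-disc and each $d_i^{\gamma(t)}$ is $G$-homotopic to $d_i$ through the one-parameter sub-family supplied by the foliation, every $F_i(t,\cdot)$ is itself a $G_0$-disc. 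By construction the two families share a common centre $F_1(t,0)=F_2(t,0)=\gamma(t)$, so $(F_1,F_2)$ is a genuine homotopy of $ec$-pairs in the sense of Definition~\ref{Definiton E}.

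At $t=0$ the pair $(d_1^p,d_2^p)$ represents the single equivalence class $\hat d\in\hat N_1\cap\hat N_2$, so its two members are equivalent. Condition~(2) of Definition~\ref{Definiton E} therefore forces the terminating pair $(d_1^q,d_2^q)$ to be equivalent as well, i.e.\ to determine a single equivalence class. Hence the unique point of $\hat N_1$ lying over $q$ coincides with the unique point of $\hat N_2$ lying over $q$, which is exactly the desired identity $\hat{\mathcal P}_1^{-1}(q)=\hat{\mathcal P}_2^{-1}(q)\subset\hat N_1\cap\hat N_2$. The main point that needs care is the verification that each foliation disc $d_i^{\gamma(t)}$ really is a $G_0$-disc rather than a mere $G$-disc; this is ensured by taking the transverse parameter $\delta$ in the construction of $\mathcal R_{d_i}$ small enough that the boundaries of all nearby foliation discs stay in $G$ and the foliation itself provides a $G$-homotopy back to $d_i$. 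Working with the connected component $Q^p$ rather than all of $Q_1\cap Q_2$ is precisely what guarantees that the path $\gamma$ can be drawn inside the intersection.
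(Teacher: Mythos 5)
Your proof is correct and follows essentially the same route as the paper: connect $p$ to $q$ by a path $\gamma$ inside $Q^p$, observe that the two families of normalized foliation discs $d_i^{\gamma(t)}$ form a homotopy of $ec$-pairs of $G_0$-discs, start from the equivalence of $d_1^p$ and $d_2^p$ (both represent $\hat d$), and invoke condition~(2) of Definition~\ref{Definiton E} to propagate equivalence to $t=1$. Your phrasing of the initial step is actually slightly more careful than the paper's, which speaks as if the central discs $d_1,d_2$ themselves are centred at $p$ and equivalent, whereas the precise statement (as you give it) is that the foliation discs $d_1^p$ and $d_2^p$ over $p$ are equivalent because both represent the class $\hat d$.
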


\medskip

It is clear from the lemma, that standard neighbourhoods form the
basis of a topology. The lemma also implies that this topology is
Hausdorff. Indeed, equivalence classes of $G_0$-discs with different
center have obviously non-intersecting standard neighbourhoods. Let
now $\hat d_1$, and $\hat d_2$ be distinct equivalence classes with
equal center. Take standard neighbourhoods $\hat {\mathcal{P}}_j:
\hat N_j \rightarrow Q_j$ of $\hat d _j$, $j = 1,2$. Let $Q^{p_0}$
be the connected component of $Q_1 \bigcap Q_2$ that contains the
common center $p_0$ of $\hat d_1$ and $\hat d_2$ . Then by the lemma
${\hat {\mathcal{P}}_j}^{-1} (Q^{p_0})$ are disjoint standard
neighbourhoods of the $\hat d_j$.

\medskip

\noindent {\it Proof of lemma \ref{Lemma 7}.} Let $q$ be any point
in $Q^p$. Join $p$ with $q$ by a curve $\gamma $ in $Q^p$, $\gamma :
[0,1] \rightarrow Q^p, \; \gamma (0)=p,\; \gamma (1)=q$. Let $\hat
\gamma_j \defined \hat {\mathcal{P}}_j ^{-1} \circ \gamma,\; j=1,2$.
By construction the equivalence class $\hat \gamma_j (t),\; j=1,2,
\; t \in [0,1],$ is represented by the unique disc of the foliation
of $\mathcal R_{d_j}$ which passes through $\gamma (t)$ normalized
so that its center becomes $\gamma (t)$. Denote the respective
normalized disc by ${d_j}^{\gamma (t)}$. For $t=0$ the discs
${d_j}^{\gamma (t)},\; j=1,2,$ coincide with the central discs $d_j$
of the foliation.

By the conditions of the lemma the discs $d_1$ and $d_2$ are
equivalent, hence for $t=0$ the pair $({d_1}^{\gamma
(t)},{d_2}^{\gamma (t)})$ consists of equivalent discs. Therefore,
by Definition 3 (see (ii)) for each $t\in [0,1]$ the pair consists
of equivalent discs. For $t=1$ the pair coincides with
$({d_1}^q,{d_2}^q)$. By construction the respective equivalence
classes $\hat {d_1}^q = \hat {d_2}^q$ coincide with the respective
points of ${\hat N} _j$ over $q$. The lemma is proved. \hfill
$\square$

\medskip

The standard neighbourhoods equip the set of equivalence classes of $G_0$-discs with the
structure of a Riemann domain over $X^2$ which we denote by $(\hat G,
\hat {\mathcal{P})}$. The projection $\hat {\mathcal{P}}$ assigns to each equivalence
class of $G_0$-discs its center.

Prove that there is a natural holomorphic embedding of $G$ into
$\hat G$. Indeed, take any point $p \in G$. All analytic discs with
center $p$ and sufficiently small diameter are entirely contained in
$G$ and equivalent to each other (see Definition 3, (i)). Associate
to $p \in G$ this equivalence class of discs which is a point $\hat
p \in \hat G$. The mapping $\hat i$, which maps $p$ to $\hat p$ is
locally biholomorphic according to the way an atlas is introduced on
$\hat G$. The mapping is globally injective and $\hat {\mathcal{P}}
\circ \hat i$ is the identity mapping on $G$. Hence $\hat i$ is
biholomorphic onto its image.

It remains to show that $ \hat G$ is connected. This is an easy
consequence of the following two lemmas which will also be needed
further.

\begin{lemma}\label{Lemma 8}
Let $d: \overline \disc \rightarrow X^2$ be a $G_0$-disc. Let $U$ be
the connected component of $\{\zeta  \in \overline \disc : d(\zeta)
\in G\}$ which contains $\bd \disc$. Then for any $z \in U \cap \D$ the disc
$d \circ \varphi _z$ is equivalent to (small) discs centered at
$d(z) = d \circ \varphi _z(0)$ and contained entirely in $G$.
\end{lemma}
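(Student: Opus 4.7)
The plan is to construct an explicit $ec$-homotopy of $G_0$-disc pairs whose initial pair (at parameter $s=1$) consists of two small embedded discs in $G$ with a common centre, and whose terminal pair (at $s=0$) is $(d\circ\varphi_z, e_0)$ for some small embedded disc $e_0\subset G$ centred at $d(z)$. Reversing the parameter and invoking condition (2) of Definition 3 will then force $d\circ\varphi_z$ to be equivalent to $e_0$.

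To set things up, I would use the $G_0$-property of $d$ together with the remark following Definition 2 to fix a continuous $G$-homotopy $\{d_t\}_{t\in[0,1]}$ with $d_1=d$ and $d_0$ an analytic disc of small diameter embedded in $G$, and then form the open set
\[
\hat U \defined \{(t,\zeta)\in[0,1]\times\disc : d_t(\zeta)\in G\}.
\]
Continuity of the homotopy together with compactness of $[0,1]\times\bd\disc$ (which maps into the open set $G$) yields a uniform $\eps>0$ such that $d_t(\zeta)\in G$ whenever $1-\eps<|\zeta|\le 1$. The connected annular slab $K\defined[0,1]\times\{1-\eps<|\zeta|<1\}$ therefore lies in $\hat U$.

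The key topological step is to observe that the subset
\[
\Sigma \defined \bigl(\{0\}\times\disc\bigr)\cup K\cup\bigl(\{1\}\times(U\cap\disc)\bigr)
\]
of $\hat U$ is path-connected and contains both $(1,z)$ and $(0,0)$. The inclusions in $\hat U$ are clear: $d_0(\overline\disc)\subset G$ gives the slice at $t=0$; the definition of $U$ gives the slice at $t=1$; and the open annulus $\{1-\eps<|\zeta|<1\}$ is connected, contained in $d^{-1}(G)$, and has $\bd\disc\subset U$ in its closure, so it lies in $U$. Here one must also check that $U\cap\disc$ is itself connected, which follows by noting that any path in the connected open set $U\subset\overline\disc$ between two points of $\disc$ can be perturbed off $\bd\disc$ using that $U$ is open at each of its boundary points. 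Fix a continuous path $s\mapsto(\tau(s),\zeta(s))$ in $\Sigma$ from $(1,z)$ to $(0,0)$.

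Now set $D'_s\defined d_{\tau(s)}\circ\varphi_{\zeta(s)}$ and choose a continuously varying family $\{e_s\}$ of small embedded discs in $G$ with common centre $p(s)\defined d_{\tau(s)}(\zeta(s))\in G$. Since $\varphi_{\zeta(s)}(\bd\disc)=\bd\disc$, each $D'_s$ is a $G$-disc; it is even a $G_0$-disc because the same recipe applied on $[s,1]$ yields a $G$-homotopy of $D'_s$ to $D'_1=d_0$, which is itself $G_0$. Therefore $(D'_s,e_s)$ is an $ec$-homotopy of $G_0$-disc pairs. At $s=1$, the pair $(d_0,e_1)$ consists of two small embedded discs in $G$ at a common centre and is equivalent by Definition 3(1); running the homotopy backwards and invoking Definition 3(2) gives that $(D'_0,e_0)=(d\circ\varphi_z,e_0)$ is also equivalent, which is the claim. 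The main obstacle is the connectedness of $\Sigma\subset\hat U$: it is precisely the hypothesis that $z$ lies in the component of $d^{-1}(G)$ containing $\bd\disc$ which lets one reach $(1,z)$ through $\{1\}\times(U\cap\disc)$, and the uniform collar $K$ is the bridge linking the fully-in-$G$ slice at $t=0$ to the restricted slice at $t=1$.
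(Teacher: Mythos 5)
Your proof is correct and takes essentially the same approach as the paper's: pick a $G_0$-homotopy $d_s$ from $d$ to a small embedded disc, choose a compatible path of centres $z_s$ staying in the component of $d_s^{-1}(G)$ containing $\bd\disc$, form the $ec$-pairs $(d_s\circ\varphi_{z_s},D_s)$ with $D_s$ small discs in $G$, and invoke conditions (1) and (2) of Definition~\ref{Definiton E}. The only difference is that you carefully justify the existence of the path of centres via the uniform collar and the set $\Sigma$, a point the paper states without elaboration.
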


\begin{lemma}\label{Lemma 9}
Consider the set of analytic discs $d: \overline \disc \rightarrow X^2$ such
that $d$ extends to an analytic mapping in a neighbourhood of
$\overline \disc$. Endow the set with the topology of
$C^1$-convergence on the closed disc $\overline \D$. Then the set of
$G_0$-discs is open in this space and the mapping which assigns to
each $G_0$-disc its equivalence class in $\hat G$ is continuous.
\end{lemma}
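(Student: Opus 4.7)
The plan is to establish openness of $\GN$ first and then use this, together with a homotopy-of-pairs argument, to verify the continuity of the equivalence-class map.

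For openness, I first observe that $\G$ itself is open. If $d$ is a $G$-disc, then $d(\bd\D)$ is a compact subset of the open set $G$, so there is a neighborhood $W$ of $d(\bd\D)$ with $\overline W\subset G$; any disc $d'$ that is $C^0$-close (hence $C^1$-close) to $d$ then also satisfies $d'(\bd\D)\subset W\subset G$. To pass from openness of $\G$ to openness of $\GN$, I argue that $\G$ is locally path-connected in the $C^1$ topology. Using the proper holomorphic embedding $\mathfrak F:X^2\to\C^n$ fixed at the beginning, together with a holomorphic retraction $\pi$ from a tubular neighborhood of $\mathfrak F(X^2)$ onto $\mathfrak F(X^2)$ (such retractions exist because $\mathfrak F(X^2)$ is a closed Stein submanifold of $\C^n$), for any $d'$ sufficiently $C^1$-close to $d$ the linear interpolation followed by retraction,
$$d_s(z)\;=\;\mathfrak F^{-1}\bigl(\pi\bigl((1-s)\,\mathfrak F(d(z))+s\,\mathfrak F(d'(z))\bigr)\bigr),\qquad s\in[0,1],$$
is well-defined, depends continuously on $s$ in the $C^1$-topology, and consists of $G$-discs. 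Thus $\G$ is locally path-connected, so its connected components, in particular $\GN$, are open in the $C^1$ topology.

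For continuity of the class map $\Psi:\GN\to\hat G$, $\Psi(d)=\hat d$, I use the standard neighborhoods $\hat N^{d_0}\to Q_{d_0}$ introduced in the proof of Proposition~\ref{Proposition 1} as a basis of neighborhoods of $\hat d_0$. Fix $d_0\in\GN$ with center $p=d_0(0)$. For $d'$ sufficiently $C^1$-close to $d_0$, the center $d'(0)$ lies in $Q_{d_0}$, and the foliation of the Riemann domain $\mathcal R_{d_0}$ produces a canonical $G_0$-disc $d_0^{\,d'(0)}$ centered at $d'(0)$ whose equivalence class is the point of $\hat N^{d_0}$ over $d'(0)$. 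It suffices to show $d'\sim d_0^{\,d'(0)}$. I apply the retraction construction of the previous paragraph to obtain a continuous path $s\mapsto d_s$ from $d_0$ to $d'$, which stays in $\GN$ by the openness just proved, and consider the continuous family of pairs $(d_s,\,d_0^{\,d_s(0)})$. Each pair is equally centered at $d_s(0)$; at $s=0$ it is $(d_0,d_0)$, which is equivalent by reflexivity of the equivalence relation; at $s=1$ it is $(d',\,d_0^{\,d'(0)})$. Condition (2) of Definition~\ref{Definiton E} therefore gives $d'\sim d_0^{\,d'(0)}$, so $\Psi(d')\in\hat N^{d_0}$, and continuity at $d_0$ follows by shrinking $Q_{d_0}$.

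The main obstacle I anticipate is verifying that the family $(d_s,d_0^{\,d_s(0)})$ really qualifies as an $ec$-pair homotopy in the sense of Definition~\ref{Definiton E}: each of the two entries must be a $G_0$-disc for every $s$, both must share a center, and the whole family must be $C^1$-continuous in $s$. The shared center is automatic from the construction of the foliation; the $C^1$-continuity of the first entry comes from the retraction and that of the second from the holomorphic dependence of the foliation of $\mathcal R_{d_0}$ on the base point $q\in Q_{d_0}$; and the fact that $d_s$ remains in $\GN$ is precisely where the openness of $\GN$, established in the first step, is used. The foliation discs $d_0^{\,q}$ are $G_0$-discs by construction of $\mathcal R_{d_0}$, so no additional argument is needed for the second entry.
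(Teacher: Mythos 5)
Your argument is correct. Both steps hold up: $\G$ is open (a small $C^1$-perturbation still has boundary in a compact-containing subset of $G$), $\G$ is locally $C^1$-path-connected via the $\mathfrak F$-interpolation-and-retraction construction, so its connected component $\GN$ is open; and the family $(d_s,\,d_0^{d_s(0)})$ is indeed a continuous homotopy of equally centered pairs of $G_0$-discs (joint continuity, common centers, and $G_0$-membership all check out), so condition~(2) of Definition~\ref{Definiton E} gives $d'\sim d_0^{d'(0)}$ and hence $\hat d'\in\hat N^{d_0}$.

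The route differs from the paper's in the mechanism of the homotopy. The paper instead lifts the nearby disc $D$ into the local Riemann domain $\mathcal R_{d}=(V_d,F_d)$ — the lift exists and is unique once $D$ is $C^1$-close — and observes that the lifted disc and the (re-centered) foliation leaf through $F_d^{-1}(D(0))$ share the same center $F_d^{-1}(D(0))$ in the convex chart $V_d\subset\C^2$; straight-line interpolation there preserves the center, so condition~(2) applies to the pair $\bigl(\text{interpolated disc},\,d_0^{D(0)}\bigr)$. That keeps the whole argument inside the already-constructed chart $V_d$ and dispenses with the ambient embedding and retraction. Your version moves the base point along the curve $s\mapsto d_s(0)$ while pairing with the correspondingly varying foliation leaf; it needs the extra machinery of $\mathfrak F$ and the holomorphic retraction, and it needs you to establish openness of $\GN$ separately and use it to keep $d_s$ in $\GN$. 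The paper gets openness essentially for free from the same lift: once $D$ is seen to be $G$-homotopic (via the chart interpolation) to a foliation leaf, which is $G_0$ by construction, $D$ is itself $G_0$. Both proofs are valid; yours is a little more elaborate in machinery but cleanly separates the openness and continuity claims, while the paper's is more economical by working entirely inside $\mathcal R_d$.
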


Postpone the proof of the lemmas for a moment and finish the proof
of proposition \ref{Proposition 1}.

\medskip

{\it End of proof of proposition \ref{Proposition 1}.}

We show that any point in $\hat G$ can be connected with a point in
$\hat i(G)$ by a path. Let $\hat d \in \hat G$ and let $d$ be a
representative of $\hat d$. Take a segment $[0,r] \subset \disc$ in
the unit disc with $d(r) \in G$. Then $d\circ \varphi _t, \; t\in
[0,r]$, is a (continuous) curve of $G_0$-discs. By lemma \ref{Lemma
8} the disc $d\circ \varphi _r$ is equivalent to small discs through
$d(r) \in G$ that are entirely contained in $G$. Taking equivalence
classes ${\hat d}_t = \widehat {d\circ \varphi _t},\; t\in [0,r]$,
and applying lemma \ref{Lemma 9} we obtain a curve in $\hat G$ with
${\hat d}_0 = \hat d $ and ${\hat d}_r \in \hat i(G)$. The
proposition 1 is proved. \hfill  $\square$

\medskip

\noindent {\it Proof of lemma \ref{Lemma 8}.} Since $d$ is a
$G_0$-disc there is a homotopy of $G_0$-discs $d_s$, $s\in [0,1]$,
which joins $d_1 = d$ with a small disc $d_0$ embedded into $G$.
Consider a continuous path $z_s$ in $\D$, $s \in [0,1]$, such that
for each $s$ the point $z_s$ is in the connected component $U_s$ of
$\{\zeta \in \overline \D :d_s (\zeta) \in G\}$ which contains
$\partial \D$. The normalized discs $d_s \circ \varphi _{z_s}$ are
centered at $d_s(z_s) \in G$.

Consider a second continuous family of $G_0$-discs $ D_s$,
$s\in [0,1]$, consisting of small analytic discs embedded into $G$
and centered at $d_s(z_s)$. Then the two discs $d_0 \circ \varphi
_{z_0}$ and $D_0$ are equivalent, hence so are the discs $d_1
\circ \varphi _{z_1}$ and $D_1$ (see conditions (1) and (2)
defining the equivalence relation). \hfill $\square$

\medskip

\noindent {\it Proof of lemma \ref{Lemma 9}.} Let $d$ be a
$G_0$-disc and $\hat d \in \hat G$ its equivalence class. Choose a
Riemann domain $\mathcal R_d =(V_d,F_d)$ foliated by $G_0$-discs
with $d$ being the central leaf. Let $N_d \subset V_d$ be a
neighbourhood of zero and let $Q_d \subset X^2$ be a neighbourhood
of $d(0)$ in $X^2$ such that $F_d: N_d \rightarrow Q_d$ is
biholomorphic. Let $D: \D \rightarrow X^2$ be an analytic disc that
is close to $d$ in the topology of $C^1$-convergence on $\overline
\D$ such that $D$ extends analytically to a neighbourhood of $\overline \D$.
Then $D$ is an immersion of a neighbourhood of $\overline \D$ with
$D(\bd \disc) \subset G$ and
$D(0)$ is close to $d(0)$. After possibly decreasing the neighbourhood of
$\overline \D$ on which $D$ is given
there is a unique lift of $D$ to the
Riemann domain $\mathcal R _d$ that passes through the point
$F_d^{-1}(D(0))$. The lifted disc is equivalent to the disc of the
foliation of $\mathbb R _d$ that passes through this point.
Continuity of the mapping and openess of the set of $G_0$-discs are
now clear. \hfill $\square$

\medskip

The following two lemmas concern genericity of one-parameter
families of analytic discs and will be used in the sequel. Denote
the unit interval by $I=[0,1]$.

\begin{lemma}\label{Lemma 9a}
Let $\eps >0$ be a small number. Any continuous mapping $F: I \times
(1+\eps) \D \rightarrow X^2$ that is fiberwise holomorphic can be
approximated uniformly on $I\times (1+ \frac{\eps}{2}) \D$ by a
continuous mapping that is fiberwise a holomorphic immersion.

The approximation may be done keeping the centers of the discs fixed.

\end{lemma}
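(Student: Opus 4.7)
My strategy is to use the Stein embedding $\mathfrak{F}: X^2 \hookrightarrow \C^n$ together with a holomorphic tubular retraction to generate a large family of fiberwise-holomorphic perturbations, and then select one by a standard transversality argument. By Docquier--Grauert applied to the closed Stein submanifold $\mathfrak{F}(X^2) \subset \C^n$, there is an open Stein neighborhood $U$ of $\mathfrak{F}(X^2)$ together with a holomorphic retraction $\pi: U \to \mathfrak{F}(X^2)$. For every continuous $h: I \times (1+\eps)\D \to \C^n$ that is holomorphic in $z$, satisfies $h(t,0) \equiv 0$, and has sufficiently small sup-norm on $I \times (1+\eps/2)\D$, the expression
$$\tilde F^{h}(t, z) := \mathfrak{F}^{-1}\bigl(\pi(\mathfrak{F}(F(t, z)) + h(t, z))\bigr)$$
defines a continuous, fiberwise-holomorphic approximation of $F$ with $\tilde F^{h}(t, 0) = F(t, 0)$, so centers are automatically preserved.

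First I would restrict attention to polynomial perturbations $h(t, z) = \sum_{k=1}^{K} c_k(t)\, z^k$ with $c_k \in C(I, \C^n)$ small, for some fixed $K \geq 2$. These are parametrized by a Banach space $\mathcal{B}$. Consider the jet-evaluation map
$$\Psi: \mathcal{B} \times I \times (1+\eps/2)\D \longrightarrow TX^2, \qquad \Psi(\vec c, t, z) = \partial_z \tilde F^{h_{\vec c}}(t, z).$$
For each $(t_0, z_0)$, the linearization of $\Psi$ with respect to $\vec c$ at $\vec c = 0$ surjects onto $T_{F(t_0, z_0)} X^2$: one can produce any prescribed value of $\partial_z h(t_0, z_0) = \sum_k k\, c_k(t_0)\, z_0^{k-1}$ in $\C^n$ by choosing the $c_k(t_0)$ appropriately (a Vandermonde system for $z_0 \neq 0$; at $z_0 = 0$ the coefficient $c_1(t_0)$ alone suffices), and $d\pi$ at a point of $\mathfrak{F}(X^2)$ is the orthogonal projection onto the tangent plane. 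Hence $\Psi$ is transverse to the zero section of $TX^2$.

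The zero section of $TX^2$ has real codimension $4$ while the slice $\{\vec c\} \times I \times (1+\eps/2)\D$ has real dimension $3$. A standard Sard--Smale argument (or finite-dimensional Sard after truncating $\mathcal B$) therefore shows that for a residual set of arbitrarily small $\vec c \in \mathcal B$, the map $\tilde F^{h_{\vec c}}_t$ has no critical points on $(1+\eps/2)\D$ for every $t \in I$ simultaneously. Choosing such a $\vec c$ yields the desired approximation.

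The main obstacle, as I see it, is making the transversality estimate work \emph{uniformly in $t$}: one needs the union $\bigcup_{t \in I} \{\vec c : \tilde F^{h_{\vec c}}_t \text{ has a critical point in } (1+\eps/2)\D\}$ to have empty interior in every neighborhood of $0$ in $\mathcal B$. This is precisely where the dimension inequality $\dim_{\R} TX^2 = 4 > 3 = \dim_{\R}(I \times \D)$ saves us. The fact that the immersion condition is of complex codimension equal to the complex dimension of the target, together with the parameter space being only one-real-dimensional, is exactly what one needs; no further input from the Stein structure is required beyond the existence of the retraction $\pi$.
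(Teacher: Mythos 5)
Your argument is correct and ultimately rests on the same real-codimension-$4$ versus real-dimension-$3$ count as the paper's, but the route is genuinely different. The paper first approximates $F$ uniformly by a mapping that is holomorphic in \emph{both} $t$ and $z$ on a complex thickening $U\times(1+\frac{3}{4}\eps)\D$ (with $U$ a planar neighbourhood of $I$), turning the problem into a holomorphic one on a Stein surface $Y^2$, and then invokes the Kaliman--Zaidenberg holomorphic transversality theorem to make the $1$-jet extension $j^1\mathcal F$ transverse to the analytic subset $A\subset J_{hol}^1(Y^2,X^2)$ of jets with vanishing $z$-derivative; since $A$ has real codimension $4$, the preimage $(j^1\mathcal F)^{-1}(A)$ is discrete in the real-$4$-dimensional $Y^2$, and a generic real curve $J$ close to $I$ slices around it. You instead remain in the continuous-in-$t$ category, manufacture an explicit Banach family of fiberwise-holomorphic perturbations via the Stein tubular retraction $\pi$, and close with Sard--Smale, or equivalently ordinary Sard after truncating $\mathcal B$ to a finite-dimensional slice (constant $c_k$ already suffice). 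Your version is more self-contained -- no holomorphic jet transversality theorem, no intermediate holomorphic-in-$t$ approximation -- and it preserves centers automatically, since $h(t,0)\equiv 0$ by construction; in the paper the assertion that one ``may assume'' $F|I\times\{0\}$ is unchanged after the holomorphic-in-$t$ approximation is left to the reader and would require a further renormalization. One point you should make explicit when writing this up: surjectivity of $D_{\vec c}\Psi$ must be verified at every zero of $\Psi$ in a small ball around $\vec c=0$, not only at $\vec c=0$ itself; away from the critical set of $F$ a second-order term involving $D^2\pi$ enters the linearization, and this is precisely where $K\ge 2$ earns its keep, allowing one to prescribe $\partial_z\delta h(t_0,z_0)$ while forcing $\delta h(t_0,z_0)=0$ (alternatively, note that surjectivity is an open condition and restrict $\vec c$ to a small ball). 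Finally, $d\pi$ along $\mathfrak F(X^2)$ is in general not the orthogonal projection, but surjectivity onto the tangent plane -- which is all you use -- holds for any holomorphic retraction.
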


\begin{lemma}\label{Lemma 10} Let $\eps$ be a small positive
number. A continuous mapping $F: I\times (1+\eps) \D \rightarrow
X^2$ that is fiberwise a holomorphic immersion can be approximated
uniformly on $I \times (1+\frac{\eps}{2}) \D$ by a holomorphic
mapping $\F$ in a neighbourhood of $I\times (1+\frac{\eps}{2}) \D$
that is fiberwise a holomorphic immersion. Moreover, the
approximation can be made in such a way that $\F$ coincides with $F$
on $\{1\} \times (1+\frac{\eps}{2}) \D $ and is locally
biholomorphic in a neighbourhood of $\{1\} \times (1+\frac {1}{2} \eps)\D$.
\end{lemma}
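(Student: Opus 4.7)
\medskip

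\noindent \emph{Proof plan.} The plan is to reduce to an approximation problem in $\C^n$ via the fixed proper holomorphic embedding $\mathfrak{F}: X^2 \hookrightarrow \C^n$. Since $\mathfrak{F}(X^2)$ is a closed Stein submanifold of $\C^n$, by a Docquier--Grauert type result there is an open neighbourhood $W \subset \C^n$ of $\mathfrak{F}(X^2)$ together with a holomorphic retraction $\rho : W \to \mathfrak{F}(X^2)$. I identify $X^2$ with $\mathfrak{F}(X^2)$ throughout and treat $F$ as a continuous, fiberwise holomorphic map with values in $\C^n$; the final $\F$ will have the form $\rho \circ P$ for a suitably chosen polynomial $P(t,z)$.

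First I would write $F(t,z) = F(1,z) + G(t,z)$, where the correction $G$ vanishes identically at $t=1$, and expand $G$ in its Taylor series $G(t,z) = \sum_{k\ge 0} c_k(t)\,z^k$. The continuous coefficients satisfy $c_k(1)=0$, and by Cauchy estimates a truncation at a sufficiently high order $N$ approximates $G$ uniformly on $I\times (1+\tfrac{\eps}{2})\D$ to any prescribed precision $\delta > 0$. Approximating each $c_k$ ($k \le N$) by a polynomial (Weierstrass) and subtracting its value at $1$ yields polynomials $q_k(t)$ with $q_k(1)=0$ that are uniformly close to $c_k$ on $I$. Setting
\[
P(t,z) = F(1,z) + \sum_{k=0}^{N} q_k(t)\,z^k,
\]
we obtain a map that is entire in $t$, holomorphic in $z$ on $(1+\eps)\D$, agrees with $F$ at $t=1$, and approximates $F$ uniformly on $I\times(1+\tfrac{\eps}{2})\D$. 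For $\delta$ small, $P$ takes values in $W$, so $\F_0 \defined \rho \circ P$ is a well-defined holomorphic mapping into $X^2$ that coincides with $F$ on $\{1\}\times(1+\tfrac{\eps}{2})\D$. Since $d\rho$ restricted to $TX^2$ is the identity, and Cauchy estimates promote uniform closeness on $(1+\eps)\D$ to $C^1$-closeness in $z$ on $(1+\tfrac{\eps}{2})\D$, the map $\F_0$ is fiberwise a holomorphic immersion once $\delta$ is small enough.

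The main obstacle is the final clause: arranging that $\F_0$ is locally biholomorphic in a neighbourhood of $\{1\}\times(1+\tfrac{\eps}{2})\D$. Since $\F_0(1,\cdot)=F(1,\cdot)$ is already an immersion, local biholomorphism at $(1,z)$ is equivalent to the $t$-derivative $\partial_t \F_0(1,z)$ being transversal in $T_{F(1,z)}X^2$ to $\partial_z F(1,z)$ at every $z\in(1+\tfrac{\eps}{2})\D$. To secure this I would choose a holomorphic section $V$ of the pullback bundle $F(1,\cdot)^* TX^2$ over a Stein neighbourhood of $\overline{\D}$ that is pointwise transversal to $\partial_z F(1,\cdot)$; such a $V$ exists because the pullback is a rank-two holomorphic vector bundle over the disc, hence trivial, and nowhere-vanishing transversality is an open condition. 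Lifting $V$ to a holomorphic $\C^n$-valued function $\tilde V$ via any holomorphic frame for $TX^2$ along $F(1,\cdot)$, I would replace $P$ by $P(t,z)+\eta(t-1)\tilde V(z)$ for a small $\eta>0$. This modification preserves the exact match at $t=1$, perturbs uniform closeness only by $O(\eta)$, and ensures $\partial_t(\rho\circ P)(1,z)=\eta\,V(z)$ up to a controllable error. Choosing $\eta$ small enough to keep $\F_0$ close to $F$, and $\delta\ll \eta$ so that the transversality of $\eta V(z)$ to $\partial_z F(1,z)$ is not spoiled, yields the required $\F$. The delicate step is precisely this simultaneous balancing of the four constraints (uniform closeness, fiberwise immersion, equality at $t=1$, and transversal $t$-derivative there).
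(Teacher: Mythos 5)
Your overall strategy --- writing $F(t,z) = F(1,z) + \sum_k c_k(t)z^k$ with $c_k(1)=0$, truncating, approximating the finitely many coefficients by polynomials that vanish at $t=1$, and pushing forward via a holomorphic retraction onto the embedded Stein surface --- coincides with the route taken in the paper. The gap is exactly the step you yourself flag as delicate: the local biholomorphism near $\{1\}\times(1+\frac{\eps}{2})\D$. Uniform (Weierstrass) approximation of the merely continuous coefficients $c_k$ by polynomials $q_k$ with $q_k(1)=0$ gives no control over the derivatives $q_k'(1)$, so the $t$-derivative $\partial_t P(1,z)=\sum_{k\le N}q_k'(1)z^k+\eta\tilde V(z)$ contains an uncontrolled term $\sum q_k'(1)z^k$ that may be arbitrarily large and point in arbitrary directions. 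The assertion that $\partial_t(\rho\circ P)(1,z)=\eta V(z)$ up to ``controllable error'' therefore has no basis, and choosing $\delta\ll\eta$ cannot repair it: $\delta$ bounds $\|q_k-c_k\|_\infty$, not $|q_k'(1)|$.

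The paper removes the obstruction by a \emph{pre}-conditioning step rather than a post-conditioning one: one first replaces $F$ by a mapping that is $C^1$ in $t$, agrees with the original on $\{1\}\times(1+\eps)\D$, is still fiberwise an immersion, and has injective full differential on a strip $[1-\delta,1]\times(1+\eps)\D$. Then each $a_k(t)$ is $C^1$ in $t$, and one approximates the finitely many relevant $a_k$ in $C^1([0,1])$ by functions analytic near $[0,1]$, fixing the value at $1$ and forcing the derivative at $1$ to converge to $a_k'(1)$. Since the truncation $F_N$ already converges together with its $t$-derivative, the $C^1$-closeness transfers the injectivity of $DF$ near $t=1$ to $D\F$, which is exactly the wanted local biholomorphism. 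Your transversality modification $\eta(t-1)\tilde V$ could be made to work inside your own framework if you additionally arranged $q_k'(1)=0$ (for instance, if $p_k$ is a uniform polynomial approximation of $c_k$ normalized to $p_k(1)=0$, replace it by $p_k(t)-p_k'(1)(t-1)t^m$ for large $m$, which kills the derivative at $1$ at the price of a uniformly small error); but as written the argument does not close.
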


\noindent {\it Proof of Lemma 10}. Assume first that $X^2$ equals
$\C ^2$. Decreasing $\epsilon >0$ we may replace $F$ by a
$C^1$-mapping which coincides with the previous one on $\{1\} \times
(1+\eps) \D$ and has injective differential on $[1-\delta,1]\times
(1+\eps) \D$ for some small positive number $\delta$. This can be
done so that the new mapping is uniformly close to the old one and
is fiberwise a holomorphic immersion. Denote the new mapping as
before by $F$.

The mapping $F$ can be expressed by Taylor series in the
$z$-variable that converge uniformly for $t \in I$ and $z \in
(1+\frac {3}{4} \eps)\D$:

\begin{equation*}
F(t,z)= \sum_{k=0}^{\infty} a_k(t) z^k.
\end{equation*}

We obtain a uniform estimate for the coefficients

\begin{equation*}
 |a_k(t)| \leq M (1+\frac {3}{4}\eps)^{-k},\;\;\; k=1,2,...,\;\; t\in I,
\end{equation*}

\noindent for a constant M not depending on $k$ and $t$. A similar
estimate holds for the $t$-derivatives $a_k'(t)$ of the
coefficients. The functions

\begin{equation*}
F_N(t,z)= \sum_{k=0}^{\infty} a_k(1) z^k + \sum_{k=0}^N (a_k(t) -
a_k(1)) z^k
\end{equation*}

\noindent converge to $F$ uniformly on $I\times (1+\frac{\eps}{2})
\D$ and $\frac{\partial}{\partial t} F_N(t,z)$ converge uniformly to
$\frac{\partial}{\partial t} F(t,z)$ on this set. It remains to approximate
finitely many of the $a_k$ in $C^1([0,1])$ by analytic
functions in a neighbourhood of $[0,1]$ so that their value at $1$
is fixed and the derivative at $1$ converges to $a_k'(1)$.

For general Stein surfaces $X^2$ we consider a holomorphic embedding
$\mathfrak{F} : X^2 \rightarrow \C ^4$ and proceed as above with the
coordinate functions of the mapping $\mathfrak{F} \circ F$. The
image of the approximating mappings is contained in a small tubular
neighbourhood of $\mathfrak{F} X^2$. It remains to compose with a
holomorphic projection of the tubular neighbourhood onto
$\mathfrak{F} X^2$. \hfill $\square$

\medskip

\noindent {\it Proof of Lemma 9}. The lemma follows from the
holomorphic transversality theorem (\cite{KaZa}, see also \cite{Fo})
by standard dimension counting. For convenience of the reader we
give the short argument.

After uniform approximation on $I\times (1+\eps)\D$ we may assume
that the mapping $F$ is holomorphic on $Y^2 \defined U\times
(1+\frac {3}{4} \eps)\D$  for a neighbourhood $U$ of $I$ in $\C$, in
other words $F$ is a holomorphic mapping from the Stein surface
$Y^2$ into the complex manifold $X^2$. We may assume that the
restriction $F|[0,1] \times \{0\}$ is the same as before and the
mapping is  a fiberwise immersion near the set $U \times \{0\}$.

Denote by $A$ the set of all
elements in the space of $1$-jets $J_{hol}^1(Y^2,X^2)$  of
holomorphic mappings from $Y^2$ to $X^2$ which have vanishing
derivatives in the $z$-direction.
$A$ is an analytic submanifold of $J_{hol}^1(Y^2,X^2)$. A mapping
$\F$ from a subset of $Y^2$ to $X^2$ is fiberwise (for fixed
$t$-variable) an immersion if its $1$-jet extension $j^1\F$ avoids
$A$.

Since the $1$-jet extension of $F$ restricted to $|U \times \{0\}$
avoids $A$, by the holomorphic transversality theorem (\cite{KaZa},
see also \cite{Fo}) the mapping $F$ can be uniformly approximated on
relatively compact open subsets $\overset{\circ}{Y}$ of $Y^2$ by
holomorphic mappings $\F$ with $1$-jet extension transversal to $A$,
fixing its $1$-jet on $U \times \{0\}$. Take for
$\overset{\circ}{Y}$ a set of the form $\overset{\circ}{U}\times
(1+\frac {\eps}{2})\D$ for a relatively compact open subset
$\overset{\circ}{U}$ of $U$ containing $I$.

Note that $A$ has real codimension $4$ in $J_{hol}^1(Y^2,X^2)$ and
$j^1\F$ maps the real $4$-dimensional manifold $\overset{\circ}{Y}$
into $J_{hol}^1(Y^2,X^2)$. Hence for a curve $J\subset
\overset{\circ}{U}$ which is a small perturbation of $I$ the
restriction of $\F$ to $J\times (1+\frac {\eps}{2})\D$ has the
desired property: its $1$-jet extension avoids $A$. \hfill $\square$

\bigskip

\noindent {\it Proof of lemma \ref{Lemma 1}.} Consider the subsets
$\mathfrak{c} \defined ( [0,1)\times \overline \D )\;\bigcup \; (
[0,1] \times \partial \D )$ and $\mathfrak{c}_0\;= \;( \{0\} \times
\overline{\D}) \; \cup \; ([0,1]  \times \bd{\D})\;$ of
$\mathbb{R}\times \C$ and their convex hull $\mathfrak{C} \defined
[0,1] \times \overline {\D}$.

Recall that the most elementary version of the continuity principle
states that any holomorphic function in a neighbourhood of the set
$\mathfrak{c}$ (more generally in a neighbourhood of
$\mathfrak{c}_0$) in $\C ^2$ extends to a holomorphic function in a
neighbourhood of $\mathfrak{C}$ in $\C ^2$.

The proof is completely elementary: The Cauchy type integral over
the circles $\{t\} \times (1+\varepsilon)\partial \D$ ($\varepsilon
>0$ small and $t \in [0,1]$) defines an analytic function in a neighbourhood of
$\mathfrak{C}$ which coincides with the original function in a
neighbourhood of the bottom disc $\{0\} \times \overline \D$.

Let $d$ be a
$G_0$-disc. Let $\F$ be the mapping of lemma 10.
For any analytic function $g$ in $G$ the function $g\circ \F$ is
analytic in a neighbourhood $U$ of $\mathfrak{c}_0\;= \; (\{0\}
\times \overline{\D})\; \cup \; ([0,1] \times \bd{\D})\;$. By the
continuity principle $g\circ \F$ extends analytically to a
neighbourhood of $\mathfrak{C}\; = \;[0,1] \times \overline{\D}$, in
particular it extends analytically to a neighbourhood $V$ of  $\{1\}
\times \overline{\D}$.

The neighbourhood $V$ together with the mapping $\F$ define a
Riemann domain over $X^2$. Use the mapping $\F$ to glue the Riemann
domain to the domain $G$ along a suitable connected neighbourhood of
$\{1\} \times \partial \D$. Any analytic function $g$ on $G$ extends
analytically to the  union of $G$ with the Riemann domain.

Identify points in the union which are not separated by extensions
of holomorphic functions on $G$. This factorization gives a
Hausdorff space (see \cite{H} for the case of $\C^2$ and
\cite{Rossi} for the general case), and hence a Riemann domain which
is an extension domain of $G$ the points of which are separated by
analytic functions. It is biholomorphically equivalent to a subset
of the envelope of holomorphy (the biholomorphic mapping being
compatible with projection), see e.g. \cite{H}. The described
procedure gives an immersion $\tilde d$ of the $G_0$-disc into
$\tilde G$ such that $d=\tilde {\mathcal{P}} \circ \tilde d$ and
$\tilde d (\bd {\D})$ is contained in $\tilde i(G)$. The lemma is
proved. \hfill $\square$

\medskip

\noindent {\it Proof of lemma \ref{Lemma 2}}. The lemma is true for
two discs of small diameter embedded into $G$. Indeed, the mapping
$\hat i$ maps the center of both of them to the same point in $\hat
i(G)$. The statement of the lemma is preserved under homotopies of
pairs of equally centered $G$-discs. Indeed, let $(F_1(t,\cdot),
F_2(t,\cdot)), t \in I,$ be such a homotopy. Suppose for
$d_j=F_j(0,\cdot)$ the desired equality $\tilde d_1(0)=\tilde
d_2(0)$ holds.

Apply lemma \ref{Lemma 1} to each disc $F_j(t,\cdot)$ with $ t \in
I, \, j=1,2$. We obtain a unique lift $\tilde F_j(t,\cdot)$  of each of
the discs to $\tilde G$. As in the proof of lemma \ref{Lemma 9} for
fixed $j$ the lifts of the discs depend continuously on the
parameter $t$. For $j=1,2$ the curve  $\tilde F_j(t,0)$ is a lift to
$\tilde G$ of the same curve in $X^2$, namely, of the curve of the common centers
$F_1(t,0)=F_2(t,0)$ of the pairs. Since by assumption the lifts of
the centers coincide for $t=0$, by uniqueness the lifts of the whole
curve coincide. The lemma is proved. \hfill $\square$

\medskip

\noindent {\it Proof of proposition \ref{Proposition 2}.} Take for
each equivalence class of $G$-discs a representative and consider
the lift of its center to the envelope of holomorphy $\tilde G,$ (see
Lemma \ref{Lemma 1}). By Lemma \ref{Lemma 2} this point does not depend on
the choice of the representative but only on the equivalence class.
This defines a continuous mapping $\rho : \hat G \rightarrow \tilde G$ which respects projections:
$\tilde {\mathcal{P}} \circ \rho = \hat {\mathcal{P}}$. Hence $\rho$ is locally biholomorphic.

This map
maps the set $\hat i(G) $ to $\tilde i(G)$ so that $ \tilde {\mathcal{P}}
\circ \rho = \hat {\mathcal{P}}$ on $\hat i (G)$.
The analytic continuation of functions from $\tilde i (G)$ to the envelope of holomorphy
$\tilde G$ determines analytic continuation of functions from $\hat i (G)$ to $\hat G$.
The statement of the proposition follows.
\hfill $\square$

\medskip

\section{Pseudoconvexity of the Riemann domain $\hat G$}

We come to the most subtle part of the proof of the theorem, namely the proof of Proposition 3.
In this section we reduce Proposition 3 to a lemma with which it is more convenient to work.

Our goal is to
prove that the Riemann domain $\hat G$ is $p^*_7$-convex in the
sense of Docquier and Grauert (see \cite{DoGr}, p. 105/ 106).
Docquier and Grauert proved that this convexity notion is the
weakest of the equivalent conditions for pseudoconvexity of a
Riemann domain over a Stein manifold.

Recall the notion of $p^*_7$-convexity for convenience of the
reader. Denote by $\c {\D}^2$ the set ${\disc}^2 \cup (\overline{\D}
\times \bd{\disc})$. This subset of the closed bidisc is obtained by
removing from $\overline {\D}^2$ its "open face" $\bd{\D} \times
\D$. Following Grauert we denote by $\tilde{\bd} \hat G$ the
"boundary of $\hat G$ in the sense of ends" defined by filters
(\cite{DoGr}, p. 104, \cite{FrGr}, p.100). The notion of
$p^*_7$-convexity uses the definition of an $R$-mapping. An
$R$-mapping into the Riemann domain $\hat G$ is a continuous mapping
$\phi$ from the closed unit bidisc $\overline {\disc}^2$ into the
closure $\hat G \cup \tilde{\bd} \hat G$ of the Riemann domain $\hat
G$ that has the following properties.

\begin{itemize}
\item[(I)] $\phi (\overline{\D}^2) \not \subset \hat G$,
\item[(II)]  $\phi (\c {\D}^2) \subset \hat G$
\item[(III)] The mapping $\hat {\mathcal{P}} \circ \phi$ extends to a
biholomorphic mapping of a neighbourhood of the closed bidisc
$\overline{\disc}^2$ into $X^2$.
\end{itemize}

According to the definition of Docquier and Grauert $\hat G$ is
$p^*_7$-convex, equivalently pseudoconvex, if each end $p \in
\tilde{\bd} \hat G$ of $\hat G$ has a neighbourhood $U(p)$ in $\hat
G  \, \cup \, \tilde{\bd} \hat G$ such that no $R$-mapping with
image in $U(p)$ exists. We will prove that any mapping satisfying
(II) and (III) will violate (I). More precisely, denoting the
extension of the mapping $\hat {\mathcal{P}} \circ \phi$ to a
neighbourhood of the closed bidisc (see (III)) by $\Psi$ and the
mapping $\phi$ extended to a neighbourhood of ${\c {\D}^2}$ in $\C^2
$ by $\hat \Psi$, proposition 3 reduces to the following statement.

\medskip
\noindent{\bf Proposition 3'}. {\it Let $\Psi$ be a biholomorphic
mapping from a neighbourhood $\mathcal N (\overline{\disc}^2)
\subset \C^2$ of the closed bidisc onto a subset of $X^2$. Suppose
the restriction of $\Psi$ to a neighbourhood $\mathcal N (\c{\D}^2)$
of $\c{\D}^2$ lifts to a biholomorphic mapping $\hat \Psi$ onto a
subset of $\hat G$ such that $\hat {\mathcal{P}} \circ \hat \Psi = \Psi$ on
$\mathcal N (\c {\D}^2)$. Then the mapping $\Psi$ lifts to a
biholomorphic mapping, again denoted by $\hat \Psi$, from a
neighbourhood of the closed bidisc onto a subset of $\hat G$, such
that $\hat {\mathcal{P}} \circ \hat \Psi = \Psi$ on
this neighbourhood.}

\medskip

To prove proposition 3' we have to show that for any point $p$ in
the face $\bd{\D}\times \D \; (= \overline \D^2 \setminus \c\D^2)$ of
the bidisc there is a neighbourhood $U$ of $p$ and a lift of the
mapping $\Psi \mid U$ to $\hat G$ which coincides with $\hat \Psi$
on $U \cap {\D}^2$. After rotation in the first variable we may
assume that $p \in \{1\} \times \D$.

Consider the intersections of the closed bidisc, respectively of the set
$\c{\D}^2$, with the set $[0,1]\times \overline \D$. The first intersection is equal to
$\mathfrak{C} \,= \,
[0,1] \times \overline {\D}  $, the second equals $\mathfrak{c}\,=\,
( [0,1)\times \overline \D )\;\bigcup \; ( [0,1] \times \partial \D
)$.

It will be enough to prove proposition 3'
for $\mathcal N (\overline{\disc}^2)$ replaced by a neighbourhood
of $\mathfrak{C}$ and $\mathcal N
(\c{\D}^2)$ replaced by a neighbourhood
of $\mathfrak{c}$ . Moreover, since lifting is an open property
it is enough to prove the following proposition.

\medskip
\noindent{\bf Proposition 3''}. {\it Suppose $\Psi : \mathfrak{C} \rightarrow X^2$
is a continuous mapping which is fiberwise a holomorphic immersion (of a neighbourhood of the closed
disc $\overline \D$ in $\C$ into $X^2$). Suppose $ \Psi |\mathfrak{c}$ lifts to a continuous mapping
$\hat {\Psi} : \mathfrak{c} \rightarrow \hat G$ with $\hat P\circ \hat {\Psi} = \Psi$. Then the mapping
$\Psi$ on the whole set $\mathfrak{C}$ admits a lift to $\hat G$.}

\medskip

Recall the following reformulation of the property to admit a lift to $\hat G$.

\medskip

A mapping $\Psi$ from a set $E \subset \mathfrak{C}$ into $X^2$
lifts to a mapping $\hat \Psi : E \rightarrow \hat G $ iff for each
point $(t,z) \in E$ there exists a  $G_0$-disc $d_{(t,z)}$ with
center at $\Psi (t,z)$ which represents the equivalence class $\hat
\Psi (t,z)=\hat d _{(t,z)}$ and, moreover, the equivalence classes
$\hat d _{(t,z)}$ depend continuously on $(t,z)$.

Let $\Psi : \mathfrak{C} \rightarrow X^2$ be a mapping for which the restriction
to $\mathfrak{c}$ lifts to a continuous mapping into $\hat G$. Write
$\Psi_t(\cdot) \defined \Psi(t,\cdot)$ and let $\hat {\Psi}_t(\cdot)$ be the lifted
mapping where it is defined.

The following simple lemma allows to modify the family
$\Psi _t$ to obtain a family with a stronger property of the initial
disc: Namely, one can
assume that the initial disc has small diameter and is embedded into
$G$ instead of assuming that through each of its points there is a
$G_0$-disc.

\begin{lemma}\label{Lemma 11} Under the conditions of Proposition 3'' there is a continuous family of
analytic discs $\Phi _t=\Phi (t,\cdot)$, $\Phi :\mathfrak{C}=[0,1]
\times \overline \D\ \rightarrow X^2$, which coincides for $t$ close
to $1$ with the family of the previous discs, i.e. $\Phi
(1,z)=\Psi(1,z)$ for $z \in \overline \D$ and $t$ close to $1$, and
has the following properties:
\begin{itemize}
\item [(1)] $\Phi \mid \mathfrak{c}$ lifts to a mapping $\hat \Phi :\mathfrak{c}\rightarrow
\hat G $.

\item [(2)] The lift $\hat {\Phi}_0 : \overline \D \rightarrow \hat G$ of the disc
$\Phi _0$ is embedded into $\hat i (G)$. Its projection $\Phi _0
(\overline \D)\,\, = \hat P  \circ \hat {\Phi}_0(\overline \D)$ is
an analytic disc of small diameter embedded into $G$.

\end{itemize}
\end{lemma}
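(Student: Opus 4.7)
The strategy is to prepend to a reparametrization of $\Psi$ a homotopy that shrinks $\Psi_0$ down to a small embedded disc in $G$, built inside $\hat G$ and then projected. Fix a continuous nondecreasing $\tau:[1/2,1]\to[0,1]$ with $\tau(1/2)=0$ and $\tau(t)=t$ on a one-sided neighborhood of $1$, and set $\Phi_t=\Psi_{\tau(t)}$ for $t\in[1/2,1]$. Then $\Phi$ coincides with $\Psi$ for $t$ near $1$, and the lift on $\mathfrak{c}\cap([1/2,1]\times\overline\D)$ is furnished by the reparametrization of $\hat\Psi$ because $\tau([1/2,1))\subset[0,1)$. Everything nontrivial reduces to constructing $\Phi|_{[0,1/2]}$ with $\Phi_{1/2}=\Psi_0$, lifting continuously to $\hat G$, and ending at $\Phi_0$ a small embedded disc in $G$.

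Because $\{0\}\times\overline\D\subset\mathfrak{c}$, the restriction $\hat\Psi_0:=\hat\Psi(0,\cdot):\overline\D\to\hat G$ is a holomorphic immersion: $\hat{\mathcal P}$ is locally biholomorphic and $\Psi_0$ is fiberwise a holomorphic immersion with $\hat{\mathcal P}\circ\hat\Psi_0=\Psi_0$. Using connectedness of $\hat G$ from Proposition~\ref{Proposition 1}, I would fix a continuous path $\gamma:[0,1]\to\hat G$ from $\hat\Psi_0(0)$ to some $\hat q\in\hat i(G)$; since $\hat i$ is an open embedding (Proposition~\ref{Proposition 1}), I may arrange that a whole neighborhood of $\hat q$ sits inside $\hat i(G)$.

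I would then produce a continuous family $\hat\Phi_s:\overline\D\to\hat G$, $s\in[0,1]$, of holomorphic immersions with $\hat\Phi_0=\hat\Psi_0$ and $\hat\Phi_1$ an embedded small disc in $\hat i(G)$, in two substeps. \emph{Shrink}: for $s\in[0,1/2]$ put $\hat\Phi_s(z)=\hat\Psi_0(r(s)z)$, with continuous $r$, $r(0)=1$, $r(1/2)=\eta$, the constant $\eta>0$ chosen so small that $\hat\Psi_0|_{\eta\overline\D}$ is embedded inside a standard neighborhood $\hat N_1$ of $\hat\Psi_0(0)$ constructed as in the proof of Proposition~\ref{Proposition 1}; the center stays at $\hat\Psi_0(0)$ and each $\hat\Phi_s$ is a holomorphic immersion. \emph{Transport}: cover $\gamma$ by a finite ordered chain of standard neighborhoods $\hat N_1,\dots,\hat N_K$, each biholomorphic under $\hat{\mathcal P}$ to an open $Q_k\subset X^2$, with $\gamma$ passing successively through them. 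On $s\in[1/2,1]$, subdivide according to the passage of $\gamma$ through successive $\hat N_k$ and, in the local holomorphic coordinates on $Q_k$, translate the small disc so that its center tracks $\hat{\mathcal P}\circ\gamma$; shrinking $\eta$ further if necessary keeps the translated disc inside the current chart throughout each sub-motion, and the terminal $\hat\Phi_1$ is an embedded small disc in $\hat i(G)$ centered at $\hat q$. Setting $\Phi_t=\hat{\mathcal P}\circ\hat\Phi_{1-2t}$ on $[0,1/2]$ then delivers $\Phi_0$ a small disc embedded in $G$, $\Phi_{1/2}=\Psi_0$, and a continuous lift $\hat\Phi_{1-2t}$ matching the reparametrized $\hat\Psi$ at $t=1/2$ by construction.

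The only real obstacle is the transport substep: moving an analytic disc inside the Riemann domain $\hat G$ so that its center follows a prescribed path, through holomorphic immersions confined to a single chart at each stage. Shrinking first makes the disc much smaller than each chart, reducing transport to translation in local holomorphic coordinates on $Q_k$ (equivalently, flow along a holomorphic vector field tangent to the projected arc of $\gamma$); the patching across $\hat N_k\cap\hat N_{k+1}$ is consistent because the chart identifications via $\hat{\mathcal P}$ agree on overlaps.
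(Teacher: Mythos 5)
Your proof follows essentially the same strategy as the paper: radially shrink the lifted disc $\hat\Psi_0$ inside $\hat G$ to a small embedded disc, transport this small disc along a path in $\hat G$ to a point of $\hat i(G)$, project back to $X^2$, and reparametrize the $t$-interval. Your chart-by-chart transport substep merely fleshes out a detail the paper leaves implicit (``if $\sigma>0$ is small enough such a family can be found'').
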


\begin{proof}

We will extend the family $\Psi(t,z)$ for negative values of $t$ and
reparametrize in the parameter $t$ to obtain property (2).

The extension is constructed as follows. According to the conditions
the disc $\Psi _0 = \Psi(0,\cdot)$ lifts to a mapping  $\hat {\Psi}
_0 : \overline \D \rightarrow \hat G$.

For $t\in [-1,0]$ we define a mapping $ \hat {\Psi}_t $ as a
contraction of $ \hat {\Psi} _0$ along the radius. More precisely,
choose a small enough positive number $\sigma$ and define  $\hat
{\Psi}_t (z) \defined \hat{\Psi}_0 (\rho (t)z), \;z\in \overline
\D$, for an orientation preserving diffeomorphism $\rho : [-1,0]
\rightarrow [\sigma ,1]$.

Connect the center $\hat\Psi _0 (0)$ of the lifted disc $\hat
\Psi_0$ with a point on $\hat i(G)$ by a curve $\hat h
:[-2,-1]\rightarrow \hat G$. Associate to the curve a continuous
family of analytic discs $\hat {\Psi} _t : \overline \D \rightarrow
\hat G$, $t \in [-2,-1]$, such that the curve of centers $\hat
{\Psi}_t(0)$ coincides with $\hat h (t),\; t \in [-2,-1]$ and the
analytic disc $\hat{\Psi}_{-1}$ coincides with the previous analytic
disc $z \rightarrow \hat {\Psi}_0(\sigma z)$. If $\sigma >0$ is
small enough such a family can be found. Indeed, one can take small
analytic discs embedded into $\hat G$ with center $\hat h$.
Moreover, this family can be chosen so that $\hat {\Psi}_{-2}$ is an
embedding into $\hat i (G)$. Projecting to $X^2$ gives a family
$\Psi _t = \hat{\mathcal{P}} \circ \hat{\Psi }_t$, $t \in [-2,-1]$,
which is a continuous extension of the family ${\Psi}_t, \; t \in
[0,1]$.

The mapping $\Phi$ is obtained by changing the parameter
$t$ by an orientation preserving diffeomorphism of the interval
$[-2,1,]$ onto $[0,1]$ which is the identity near $1$.
\end{proof}

Lemma 13 below will be the key for proving proposition 3''. We will
state the Lemma after formulating the weaker lemma 12 which
considers a single analytic disc instead of a family of discs. Lemma
12 is easier to state than Lemma 13. Later we will formulate a more
elaborate version of lemma 12 which will be used in the proof of the
corollaries (see Lemmas 17 and 18 below).

\begin{lemma}\label{Lemma 12} Let $\Phi : \overline \D \rightarrow X^2$ be an
analytic disc such that its boundary lifts to $\hat G$.
Then through each point $\Phi (z), z \in \D,$ passes a $G$-disc (but maybe, not a
$G_0$-disc).

\end{lemma}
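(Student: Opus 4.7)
\textbf{Plan for Lemma \ref{Lemma 12}.} The strategy is to use the given boundary lift to obtain a continuously varying family of small $G_0$-disc representatives along $\partial\D$, to promote it to a fiberwise holomorphic family over a collar of $\partial\D$, and then to construct a single analytic $G$-disc through $\Phi(z_0)$ by an attach-and-approximate argument. First I would extend the given lift $\hat \Phi|_{\partial\D}:\partial\D\to\hat G$ to a continuous lift $\hat\Phi:U\to\hat G$ of $\Phi|_U$ over an open annular neighborhood $U$ of $\partial\D$ in $\overline\D$; this is a monodromy-free lifting, using that $\hat{\mathcal{P}}:\hat G\to X^2$ is a local biholomorphism and that the lift is prescribed on $\partial\D$.

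By the construction of $\hat G$ from standard neighborhoods of foliated Riemann domains (Proposition \ref{Proposition 1}), each point $\hat\Phi(w)$, $w\in U$, is represented by a $G_0$-disc $d_w$ with $d_w(0)=\Phi(w)$, drawn from the foliation of the Riemann domain attached to any representative. Patching these local foliations, and applying Lemmas \ref{Lemma 9a} and \ref{Lemma 10} to smooth the parameter dependence, I would obtain a fiberwise-holomorphic map $F:U\times\overline\D\to X^2$ with $F(w,0)=\Phi(w)$ for $w\in U$, and with $F(\zeta,\partial\D)\subset G$ for $\zeta$ in a slightly smaller annular neighborhood of $\partial\D$ (by openness of $G$ together with the boundary property on $\partial\D$ itself).

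Next, given $z_0\in\D$, pick $\varepsilon>0$ small enough that $|z_0|<1-\varepsilon$ and the annulus $\{1-\varepsilon\le|w|\le 1\}$ lies in $U$. Form the singular ``flower'': the union of $\Phi$ restricted to the sub-disc $\{|z|\le 1-\varepsilon\}$ together with the small discs $d_\zeta=F(\zeta,\cdot)$ attached at their centers $\Phi(\zeta)$ for $\zeta$ on the circle $\{|z|=1-\varepsilon\}$. Topologically this is an immersed disc in $X^2$, it contains $\Phi(z_0)$ in its interior, and its outer boundary lies in $G$. Finally I would apply a Mergelyan/Forstneric-style holomorphic approximation -- the same ``approximate a union of analytic discs joined along curves by a single analytic disc'' technique advertised in the tube-domain example -- to replace the flower by a single immersed analytic disc $\Psi:\overline\D\to X^2$, with an interpolation condition forcing $\Psi(z^\star)=\Phi(z_0)$ at a chosen $z^\star\in\overline\D$. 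Openness of $G$ guarantees $\Psi(\partial\D)\subset G$ once the approximation is sufficiently accurate, producing the desired $G$-disc.

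The main obstacle is this final approximation: uniformly approximating the flower (an analytic disc joined along a circle of centers to a one-parameter family of small analytic discs) by a single immersed analytic disc in the Stein surface $X^2$, while simultaneously pinning the value at the prescribed interior point $z_0$ and keeping the boundary in $G$. This is where Forstneric-type holomorphic approximation with interpolation in Stein manifolds is needed; the bookkeeping should be parallel to the arguments the paper defers to Section~11 for the corollaries and for the Lagrangian-torus example. Crucially, the plan avoids any circular appeal to Proposition \ref{Proposition 3} (still being proved), since it uses only Propositions \ref{Proposition 1}--\ref{Proposition 2} together with the approximation technology.
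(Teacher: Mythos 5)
Your proposal has two genuine gaps, both precisely the points the paper's machinery is built to handle.

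\emph{First}, the step ``patching these local foliations \dots I would obtain a fiberwise-holomorphic map $F:U\times\overline\D\to X^2$'' over the whole annulus $U$ is not automatic. The standard neighbourhoods in $\hat G$ supply a continuous family of representing $G_0$-discs only \emph{locally}; as one travels around $\partial\D$, the representatives from overlapping foliated Riemann domains are different (though equivalent) discs, so a globally continuous lift of $\hat\Phi|_{\partial\D}$ to $\GN$ need not exist (there is potential monodromy through the equivalence relation). Lemmas~\ref{Lemma 9a} and~\ref{Lemma 10} only smooth an already-continuous family; they do not remove these jumps. The paper resolves exactly this via Lemma~\ref{Lemma 13a} and Lemma~\ref{Lemma 14}: one attaches dendrites (pellicles of trees carrying chains of equivalent discs) at the discontinuity points, producing an \emph{excrescence} of $\partial\D$ over which a continuous halo does exist, and then fattens those dendrites (Lemma~\ref{Lemma 15}) to land on a new analytic disc $\Phi_D:\overline D\to X^2$ with a continuous halo over $\partial D$. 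Your plan skips the entire neuron/dendrite step and therefore starts from data that may not exist.

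\emph{Second}, the ``flower'' --- the image of the subdisc together with a $1$-parameter family of small discs attached at the circle of centers --- is not topologically a disc: it is a $3$-real-dimensional singular set (its outer boundary is the torus $\bigcup_\zeta d_\zeta(\partial\D)$, not a circle), so it is not an object that Mergelyan or Forstneric approximation can replace by a single immersed analytic disc. The finite-disc-plus-arcs picture from the tube-domain example (Lemma~\ref{Lemma 23}) is $2$-dimensional and does not apply to a continuum of discs. What the paper does instead is solve an \emph{approximate Riemann--Hilbert problem}: it fills the Hartogs-type set $\mathcal{S}_{\partial D}=(\overline D\times\{0\})\cup(\partial D\times\overline\D)$ by a single holomorphic map $\H\in A_{X^2}(D\times\D)$ (Lemmas~\ref{Lemma 16} and~\ref{Lemma 16a}), with the crucial ``squeezing'' multiplier $g$ (absolute boundary values $\rho$, small on $\Gamma$) that forces the fibers $\H(\zeta,\cdot)$ to collapse to $\Phi_D(\zeta)$ on a compact $K\subset D\cup\Gamma$, while the boundary torus $\H(\partial D\times\partial\D)$ stays near $\kappa_n\subset G$. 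The $G$-disc is then the single fiber $f^z=\H(\cdot,z)$ for a fixed $z\in\partial\D$, translated slightly to hit $\Phi(z_0)$. Without this squeezing one has no control keeping the disc both near $\Phi(z_0)$ and with boundary in $G$; a naive interpolation condition does not supply it.
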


\begin{lemma}\label{Lemma 13} Let $\Phi :\mathfrak{C} \rightarrow X^2$
be a continuous family of analytic discs that satisfy conditions (1)
and (2) of Lemma 11. Then the mapping $\Phi$ lifts to a mapping
$\hat \Phi :\mathfrak{C} \rightarrow \hat G$.

\end{lemma}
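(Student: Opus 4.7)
My plan is to extend the given lift $\hat\Phi$ from $\mathfrak{c}$ to all of $\mathfrak{C}$; only the face $\{1\}\times\D$ is missing. For each $z^*\in\D$ I will exhibit a $G_0$-disc $d^{z^*}$ centered at $\Phi(1,z^*)$ whose equivalence class supplies the value $\hat\Phi(1,z^*)\in\hat G$. The natural candidate is the reparametrization $\Phi_1\circ\varphi_{z^*}$, which is centered at $\Phi(1,z^*)$; however, $\Phi_1(\partial\D)$ is only known to lift to $\hat G$ and typically does not lie in $G$, so this candidate is not a priori even a $G$-disc. The task will therefore be to certify a suitable $d^{z^*}$ as a $G_0$-disc using the family $\Phi$ together with the hypothesis (2) of Lemma~\ref{Lemma 11} that $\Phi_0$ is a small embedded disc in $G$.

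The certification will proceed via the dendrite machinery of Section~2. For each $z^*\in\D$ I will construct a dendrite with halo whose root represents the pair $(d^{z^*},\delta_{z^*})$, where $\delta_{z^*}$ is a small disc embedded in $G$ centered at $\Phi_0(z^*)\in G$. The dendrite will combine two types of edges: edges parametrized by $t\in[0,1)$ that slide along the family using the given lifts $\hat\Phi_t:\overline\D\to\hat G$, and edges parametrized by radii in $\overline\D$ that interpolate from the interior point $z^*$ out to the boundary $\partial\D$, where the boundary lift $\hat\Phi$ on $[0,1]\times\partial\D$ is already prescribed. The punctured halo will be read off from these pieces of the given lift together with the atlas of standard neighborhoods of $\hat G$ from the proof of Proposition~\ref{Proposition 1}. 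The dendrite will then certify that $d^{z^*}$ is equivalent to $\delta_{z^*}$, hence is a $G_0$-disc whose equivalence class $\hat d^{z^*}$ projects to $\Phi(1,z^*)$.

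Continuity of $z^*\mapsto\hat d^{z^*}$ on $\overline\D$ will follow from the continuous dependence of the dendrite construction on $z^*$, together with Lemma~\ref{Lemma 9} (openness of $\GN$ and continuity of the equivalence-class map). Compatibility with the prescribed boundary values on $\partial\D$ will follow from uniqueness of path lifts in the Riemann domain $\hat G$: for $z^*\in\partial\D$, both $\hat d^{z^*}$ and the given $\hat\Phi(1,z^*)$ are endpoints of continuous paths in $\hat G$ starting at $\hat i(\Phi_0(z^*))$ and projecting to the common path $t\mapsto\Phi(t,z^*)$ in $X^2$, so they must coincide.

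The main obstacle will be to carry out the dendrite construction so that its halo takes values in $\GN$ uniformly in $z^*$. The delicate point is that $\Phi_t(\partial\D)$ is typically not contained in $G$, so the $t$-sliding edges cannot be built by viewing the $\Phi_t$ themselves as $G$-discs; instead, one must use the given lift on $\mathfrak{c}$ in an essential way. Each $\Phi_t$ for $t<1$ already carries an associated equivalence class $\hat\Phi(t,z)\in\hat G$ for every $(t,z)\in\mathfrak{c}$, and the dendrite should be designed so that its halo is assembled from representatives of these equivalence classes, joined to the radial interpolation edges exactly where the two data sources match. Verifying that the resulting halo is continuous and that all leaves are small discs embedded in $G$ is the technical heart of the argument, and it relies crucially on both hypotheses of Lemma~\ref{Lemma 11}.
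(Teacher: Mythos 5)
Your proposal correctly identifies the essential obstacle (for interior $z^*$ the reparametrized disc $\Phi_1\circ\varphi_{z^*}$ need not be a $G$-disc, so there is no obvious candidate to lift) but it does not supply a mechanism to overcome it, and the mechanism you gesture at cannot work. The gap is this: dendrites with halo, as defined in Section~2, certify \emph{equivalences between $G_0$-discs that are already known to exist}; every point of the punctured halo must be an honest $\GN$-disc. They do not \emph{produce} a $G_0$-disc centered at an arbitrary interior point $\Phi(1,z^*)$. Your proposed $t$-sliding edges would require the curve $t\mapsto(\text{some }G_0\text{-disc representing }\hat\Phi(t,z^*))$ to be a halo, i.e.\ a continuous curve of $\GN$-discs; the given data only supplies continuity of the \emph{equivalence classes} $\hat\Phi(t,z^*)\in\hat G$ for $t<1$, and, more importantly, the entire content of the lemma is precisely that the limit as $t\to1$ stays inside the open Riemann domain $\hat G$ rather than running off to its boundary. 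You therefore need a concrete analytic disc immersed into $X^2$ with boundary in $G$ that passes through $\Phi(1,z^*)$ and belongs to $\GN$; no amount of dendrite bookkeeping alone can manufacture it.

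What the paper does instead, and what your proposal is missing, is the analytic construction of such a disc. First, the family $\Phi_t$ is turned into a piecewise continuous family of \emph{neurons} (analytic discs with attached dendrites and halo) whose main body is $\Phi_t$ and whose halo encodes the given lift of $\Phi|\mathfrak{c}$; then the ``peeling'' argument of Section~9 (Lemmas~20--21) converts this piecewise continuous family into a genuinely \emph{continuous} family of neurons with halo. Second, the dendrites are fattened by a Mergelyan-type approximation (Lemma~16), and approximate solutions of the Riemann--Hilbert boundary value problem over the resulting torus (Lemmas~17--18) produce an actual continuous family of immersed $G$-discs $f^z_t$, starting at a small disc embedded in $G$ and ending at a disc $f^z_1$ that passes as close as desired to $\Phi_1(z^*)$. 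This is what exhibits $f^z_1$ (after a small correction) as a $G_0$-disc centered near $\Phi_1(z^*)$. Only then does a compatibility check (Lemma~22), of the kind you correctly anticipate via uniqueness of path lifting in $\hat G$, close the argument. Your sketch contains the boundary-compatibility endgame but omits the two central analytic steps --- fattening and Riemann--Hilbert --- without which there is simply no disc $d^{z^*}$ to certify, and it also omits the peeling step needed to make the family of neurons continuous so that the Riemann--Hilbert output is itself a $G_0$-homotopy.
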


Lemmas 11 and 13 imply proposition 3''. In the following sections we
will prove Lemmas 12 and 13.

\section{Neurons}

This section is based on the key observation stated in Lemma 14
below. Start with the following definition.

\begin{definition}\label{Definition 3d}
Let $\alpha$ be a piecewise smooth curve in the plane. (It may be a
mapping of a closed interval or of the circle). We call a piecewise
smooth curve $\alpha^*$ in the plane an excrescence of $\alpha$ if
$\alpha^*$ is obtained by cutting $\alpha$ at finitely many points
and pasting each time on the "right" of $\alpha$ (according to its
orientation) the punctured pellicle of a planar rooted tree. We
require that the trees are pairwise disjoint and meet $\alpha$
exactly at their roots.

Let $\sigma$ be a continuous mapping of the image of $\alpha$ into
$X^2$ which has a continuous lift $\hat \sigma$ to $\hat G$, $\hat
{\mathcal{P}} \circ \hat \sigma = \sigma$.

Suppose there is an excrescence $\alpha^*$ and extensions $\sigma
^*$ and $\hat \sigma ^*$ of $\sigma$ and $\hat \sigma $ defined on
the image of $\alpha^*$, $\hat {\mathcal{P}} \circ \hat \sigma^*=
\sigma ^*$, with the following property. There is a halo
$\overset{\;\; \; \circ \;*}{\alpha}$ for which $\hat
{\mathcal{P}}_0 \circ \overset{\;\; \; \circ \;*}{\alpha} = \hat
\sigma^* \circ {\alpha}^*$.

Then we say that $\alpha $ has an excrescence  $\alpha ^*$ with halo
$\overset{\;\; \; \circ \;*}{\alpha}$ associated to $\hat \sigma$.

\end{definition}

\begin{lemma}\label{Lemma 13a}
Let $\alpha$ be a piecewise smooth curve in the plane such that
small shifts to the right of the smooth parts do not meet the curve.
Let $\sigma$ be a continuous mapping from its image into $X^2$
which admits a lift $\hat \sigma$ to $\hat G$. Then there exists an
excrescence $\alpha^*$ with halo $\overset{\;\; \; \circ
\;*}{\alpha}$ associated to $\hat \sigma$.
\end{lemma}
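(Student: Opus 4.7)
\emph{Plan of proof.} The plan is to cover $\alpha$ by finitely many small subarcs on each of which a local halo is immediately available from the atlas of $\hat G$, and to bridge between neighbouring local halos at the transition points by splicing in the punctured pellicles of the dendrites furnished by Lemma~\ref{Lemma 6}, each such pellicle attached on the right of $\alpha$. First I would construct the local halos. Given a point $q$ on the image of $\alpha$, choose a representative $d \in \hat\sigma(q)$ and take the associated Riemann domain $\mathcal{R}_d = (V_d, F_d)$ foliated by $G_0$-discs as in the proof of Proposition~\ref{Proposition 1}. For each $q'$ in the standard neighbourhood $Q_d \subset X^2$ of $q$, the leaf through $F_d^{-1}(q')$ normalised to have centre $q'$ is a $G_0$-disc depending continuously on $q'$; pulling this back along $\sigma \circ \alpha$ over a subarc of $\alpha$ mapped into $Q_d$ produces a continuous local halo in $\GN$ with the required projections to $\sigma \circ \alpha$ and $\hat\sigma \circ \alpha$. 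By compactness, finitely many such subarcs $\alpha_1,\dots,\alpha_N$ (ordered along $\alpha$) cover $\alpha$ and carry halos $\overset{\circ}{\alpha}_j$. At each common endpoint $q_j$ of consecutive subarcs I obtain two $G_0$-discs $d_j^- = \overset{\circ}{\alpha}_j(q_j)$ and $d_j^+ = \overset{\circ}{\alpha}_{j+1}(q_j)$ with common centre $\sigma(q_j)$, both representing $\hat\sigma(q_j) \in \hat G$, and therefore equivalent.

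By Lemma~\ref{Lemma 6} every equivalent pair $(d_j^-, d_j^+)$ admits an associated dendrite $\mathbf{T}_j = (T_j, m_{T_j}, \Phi_{T_j}, \overset{\circ}{m}_{T_j})$ whose punctured halo runs from $d_j^-$ at parameter $0$ to $d_j^+$ at parameter $1$. Because planar rooted trees are identified up to piecewise affine homeomorphism of the plane fixing the root, I can realise each $T_j$ as an arbitrarily small planar tree attached to the right of $\alpha$ at the point $q_j$, making the $T_j$ pairwise disjoint and meeting $\alpha$ only at their roots. The excrescence $\alpha^*$ is then the curve obtained from $\alpha$ by cutting at each $q_j$ and splicing in the punctured pellicle $m_{T_j}$ from its initial to its terminal endpoint. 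I would define $\sigma^*$ to equal $\sigma$ on the image of $\alpha$ and $\Phi_{T_j} \circ m_{T_j}$ on each spliced piece, define $\hat\sigma^*$ analogously using the lift $\hat\Phi_{T_j}$ supplied by Lemma~\ref{Lemma 6}, and take the halo $\overset{\circ\,*}{\alpha}$ to be the concatenation of the local halos $\overset{\circ}{\alpha}_j$ with the punctured halos $\overset{\circ}{m}_{T_j}$. The endpoint identities $\overset{\circ}{m}_{T_j}(0) = d_j^-$ and $\overset{\circ}{m}_{T_j}(1) = d_j^+$ from Lemma~\ref{Lemma 6} guarantee continuity at every junction, and $\hat{\mathcal{P}}_0 \circ \overset{\circ\,*}{\alpha} = \hat\sigma^* \circ \alpha^*$ holds piecewise by construction.

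The two analytic ingredients — local halos from Proposition~\ref{Proposition 1} and bridging dendrites from Lemma~\ref{Lemma 6} — are already in hand, so the real work is planar bookkeeping: realising the $T_j$ and their pellicles simultaneously as disjoint subsets of the plane lying on the right of $\alpha$, touching $\alpha$ only at the prescribed cut points, and oriented so that the resulting $\alpha^*$ is again piecewise smooth with a well defined right side. This is precisely where the hypothesis that small shifts of the smooth parts of $\alpha$ to the right do not meet $\alpha$ is used, together with the freedom to rescale each $T_j$ independently; I expect this combinatorial step, rather than any deep analysis, to occupy most of the actual write-up.
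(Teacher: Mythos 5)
Your proposal is correct and follows essentially the same route as the paper: cover the curve by finitely many subarcs carrying local halos (which the paper leaves implicit, while you spell out via the foliated Riemann domain $\mathcal{R}_d$ from the proof of Proposition~\ref{Proposition 1}), note that the two $G_0$-discs arising at each junction are equivalent because both represent $\hat\sigma(q_j)$, bridge with the dendrites and punctured halos of Lemma~\ref{Lemma 6}, and paste the punctured pellicles on the right of $\alpha$ to form the excrescence. The only difference is expository detail, not mathematical substance.
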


\begin{proof} Let $\alpha$ be a mapping of the unit circle into
$X^2$. (For mappings of an interval the proof is the same.) Cover
the circle by a finite number of closed arcs with pairwise disjoint
interior so that on each arc one can choose a continuous family of
$\GN$-discs representing $\hat \sigma \circ \alpha$. At each common
endpoint of two of the closed arcs we obtain two equivalent
$G_0$-discs $d^-_j$ and $d^+_j$ (limits from the left, respectively
from the right of the point). Consider for each of the discontinuity
points $t_j$ a tree $T_j$ rooted at $\alpha ({t_j})$ and
corresponding to the respective pairs of equivalent $G_0$-discs by
Lemma 4. Realize the trees as pairwise disjoint subsets of the
plane, each attached to the curve on its "right" side and meeting
the curve exactly at the root. Associate to each tree $T_j$ the
structure of a dendrite with halo $\overset{\circ}m_{T_j}$ such that
$\overset{\circ}m_{T_j}$ takes the value $d^-_j$ at the initial
point and the value $d^+_j$ at the terminating point of the
punctured pellicle of the tree $T_j$. Cut the curve at each
discontinuity point and paste the punctured pellicle of the
respective tree. Denote the obtained curve by $\alpha^*$. Extend
$\sigma$ and $\hat \sigma $ by the mappings $\Phi _{T_j}$ and $\hat
\Phi _{T_j}$ (see Lemma 5) to each of the trees and hence to each
punctured pellicle and denote the extended mappings by $\sigma ^*$
and $\hat \sigma ^*$. By the choice of the dendrites the mapping
$\sigma^* \circ \alpha^*$ lifts to $\GN$. The lift is the required
halo $\overset{\;\; \; \circ \;*}{\alpha}$.

\end{proof}

Lemma 14 will be applied, in particular, to boundaries of analytic
discs. We need the following terminology. It will be convenient to
consider analytic discs up to reparametrization by conformal
mappings of simply connected planar domains to the unit disc.

\begin{definition}\label{Definition 3c}
1)(Generalized disc) Let $D$ be a relatively compact simply
connected domain in the complex plane with smooth boundary. Let
$T_j$ be a finite collection of pairwise disjoint planar trees.
Suppose the trees have pairwise different root on $\partial D$ and
meet the closure $\overline D$ of the domain exactly at the root.
Denote by $T$ the union $ \bigcup T_j$ of the trees. The set $\nu =
\overline D \bigcup T$ is called a generalized disc, the set $\nu
\setminus D$ is called the boundary of the generalized disc $\nu$
and the excrescence of $\partial D$ (traveled counterclockwise)
determined by the union of the trees is called the pellicle of the
generalized disc $\nu$ and is denoted by $m$.

\medskip

\noindent 2) (Preneurons) Suppose, moreover, that there is a
continuous mapping $\Phi :\nu \rightarrow X^2$ that is analytic on $
D$. Then the triple $(\nu,m,\Phi)$ is called a preneuron.
We will call $\Phi \circ m$ the pellicle of the preneuron.\\
Points on the circle which are not roots of attached trees are called regular points.

\medskip

\noindent 3) (Halo of a preneuron) If the pellicle $\Phi \circ m$ of
the preneuron admits a continuous lift $\overset{\circ}{m}$ to $\GN$
then the preneuron together with the mapping $\overset{\circ}{m}$ is
called a preneuron with a halo.

\medskip
\noindent 4) (Main body) The restriction of the mapping $\Phi$ to
the closure of the domain, $\Phi :\overline D \rightarrow X^2$, is
called the main body of the preneuron.

\medskip

\noindent 5) (Axon and neuron) A non-empty dendrite whose tree
consists of a single edge with leaf mapped into $G$ (or consists of
a single leaf mapped into $G$) is called an axon. A preneuron with
an axon attached is called a neuron. A halo of a neuron is a lift
$\overset{\circ}{m}$ of the mapping $\Phi \circ m$ to $\GN$ with the
additional property that the value of $\overset{\circ}{m}$ at the
leaf of the axon is a small disc embedded into $G$.

\medskip

\noindent 6) (Continuity) We will say that a family $\nu _t$ of
generalized discs depends continuously on the real parameter $t$ if
suitable parametrizations $m_t$ of their pellicles are continuous
functions in all parameters. A family of (pre)neurons $(\nu_t,m_t,
\Phi_t)$ is continuous if in addition the mapping $\Phi _t \circ
m_t$ is continuous in all parameters. For continuity of a family of
neurons with halo we have to add the condition that the mappings
$\overset{\circ}m _t$ are continuous in all parameters.

\end{definition}

\vspace{1cm}

\begin{figure}[h]
\centering
\includegraphics[scale=0.5]{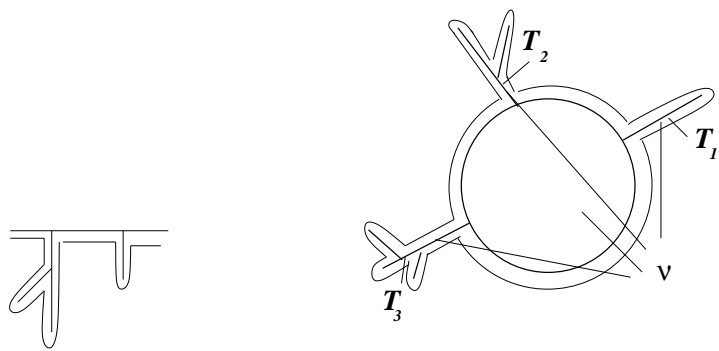}
\caption{a) An excrescence of an interval and b) a generalized disc
and a surrounding curve that approximates the pellicle}
\end{figure}

With this terminology, any analytic disc in $X^2$ is a preneuron,
but it admits the structure of a neuron only if some part of its
boundary is contained in $G$. In the latter case any boundary point
contained in $G$ can be chosen to serve a one-vertex (or degenerate)
axon. There are many ways to extend the unit disc to a generalized
disc and to give it the structure of a preneuron whose main body is
the original disc. If the generalized disc has non-empty trees
attached and $\Phi$ maps at least one leaf of certain tree into $G$
the preneuron can be given the structure of a neuron. This is always
the case if a non-empty tree of the generalized disc together with
the mapping $\Phi$ form a dendrite related to a pair of equivalent
discs according to lemma 5. Any edge of its tree that is adjacent to
a leaf may serve as the tree of an axon. Notice that the notion of
the halo of a neuron is stronger than that of the halo of a
preneuron.

The main reason for constructing neurons out of analytic discs is
the following fact: If an analytic disc is performed into the main
body of a neuron with halo then the neuron structure may be used for
obtaining $G$-discs which approximate the original disc uniformly
along compacts (see below the proof of Lemma 12; for a refinement of
this assertion see the proof of Lemma 13).

The following lemma extends Lemma 14 to preneurons.

\begin{lemma}\label{Lemma 14}
Suppose the pellicle of a preneuron $n=(\nu,m,\Phi)$, $ \Phi \circ m
\rightarrow X^2$, has a lift $\hat m$ to $\hat G$. Then there is a
neuron with halo $n^* =(\nu^*,m^*,\Phi,\overset{\circ}m^*)$ whose
generalized disc $\nu ^*$ contains $\nu$ with the following
properties. The pellicle $m^*$ of $\nu ^*$ is an excrescence of the
pellicle $m$ of $\nu$ such that the halo $\overset{\circ}m^*$ of
$m^*$ is associated to $\hat m$. The values of $\overset{\circ}m^*$
over each leaf of a tree contained in $\nu ^* \setminus \nu$ (not
only over the leaf of the axon) is a small disc embedded into $G$.
\end{lemma}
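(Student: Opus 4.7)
The plan is to invoke the preceding Lemma \ref{Lemma 13a} applied to the pellicle $m$ of the preneuron, viewed as a piecewise smooth oriented plane curve, with $\sigma = \Phi$ on the image of $m$ and $\hat \sigma = \hat m$. First I would verify the shift hypothesis: since $\nu = \overline D \cup T$ has $D$ simply connected and the trees of $T$ attached at isolated roots on $\partial D$ and contained in the exterior of $D$, the complement $\hat\C \setminus \nu$ is open and simply connected, and a sufficiently small right-shift of any smooth arc of $m$ lands in this exterior component and therefore avoids $m$. Lemma \ref{Lemma 13a} then supplies an excrescence $m^*$ of $m$, an extension $\hat\Phi^*$ of $\hat m$ along the image of $m^*$, and a halo $\overset{\circ}{m}^*$ satisfying $\hat{\mathcal{P}}_0 \circ \overset{\circ}{m}^* = \hat\Phi^* \circ m^*$.

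Next I would assemble the geometric structure. The trees furnished by Lemma \ref{Lemma 13a} are pasted on the right of $m$, so I can realize them as pairwise disjoint, thin subsets of $\hat\C \setminus \nu$, each meeting $\nu$ only at a regular point of $m$. Their union with $\nu$ is again a generalized disc $\nu^*$ in the sense of Definition \ref{Definition 3c}, with pellicle $m^*$, and extending $\Phi$ by the dendrite mappings $\Phi_{T_j^*}$ of Lemma \ref{Lemma 5} turns $(\nu^*, m^*, \Phi)$ into a preneuron with halo $\overset{\circ}{m}^*$. The crucial consequence of the proof of Lemma \ref{Lemma 13a}, which internally uses the dendrite-with-halo construction of Lemma \ref{Lemma 6}, is that every adjoined tree arises from a pair of equivalent $G_0$-discs, so the halo value at each of its leaves is automatically a small analytic disc embedded in $G$; this is exactly the strong leaf conclusion of the lemma.

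Finally, I would designate an axon. If Lemma \ref{Lemma 13a} actually adjoined at least one tree, any edge adjacent to a leaf qualifies, since its halo value already lies in a small disc embedded in $G$. In the degenerate case where $\hat m$ was already continuous into $\GN$ and no tree was needed, I would still add one by hand: pick any regular point $p$ of $m$, observe that $d_p := \overset{\circ}{m}^*(p)$ is a $G_0$-disc and hence, by Definition \ref{Definition 2}, is joined to a small analytic disc embedded in $G$ by a continuous family of $G_0$-discs, view this family as a trivial ec-pair homotopy (the same disc in both slots), attach the corresponding single-edge dendrite at $p$ into $\hat\C \setminus \nu$, and extend $\hat\Phi^*$ and $\overset{\circ}{m}^*$ along its punctured pellicle; the resulting leaf, mapped into $G$ and carrying a small disc embedded in $G$ as its halo value, furnishes the axon. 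The main (and essentially only) obstacle is the topological placement of the new trees in the plane so that $\nu^*$ remains a generalized disc; this is routine because the exterior of $\nu$ in $\hat\C$ is simply connected and the trees may be taken arbitrarily thin, after which the rest of the argument is formal bookkeeping on top of Lemmas \ref{Lemma 13a}, \ref{Lemma 5}, and \ref{Lemma 6}.
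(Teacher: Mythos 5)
Your proposal is correct and follows essentially the same route as the paper: apply Lemma~\ref{Lemma 13a} to the pellicle $m$ with $\hat\sigma = \hat m$, view the resulting excrescence as the pellicle of a generalized disc $\nu^* \supset \nu$ whose added trees carry the dendrite-with-punctured-halo structure of Lemma~\ref{Lemma 6}, and read off from that construction both the small-disc halo values at the new leaves and the existence of an axon. Your explicit construction of a single-edge dendrite in the degenerate case is the justification the paper compresses into the sentence ``we may assume that $\nu^*$ differs from $\nu$ by at least one non-trivial tree corresponding to a pair of equivalent discs.''
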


The lemma can be rephrased as follows. If the boundary of a
preneuron lifts to $\hat G$ then after further attachment of
dendrites a neuron is obtained with the following property. There is
a closed curve $\gamma :\partial{D} \rightarrow \mathcal{L}^a_0$
meeting the set of small discs contained in $G$ and such that the
curve described by the centers of the discs $\gamma (\zeta),\; \zeta
\in
\partial{D}$, coincides with the pellicle of the neuron.

\begin{proof}

Apply lemma 14 to the pellicle $m $ of the generalized disc $\nu$.
We obtain an excrescence $m^*$ which is the pellicle of a
generalized disc $\nu^*$, which is obtained from $\nu$ by attaching
further trees (either with root at the circle or with root at a tree
of $\nu$). Moreover, $m^*$ is chosen so that the mappings $\Phi$ and
$\hat \Phi$ extend to the image of $m^*$ in such a way that $\Phi
\circ m^*$ lifts to a halo $\overset{\circ}m^*$ with $\hat
{\mathcal{P}}_0 \circ \overset{\circ}m^* = \hat \Phi \circ m^* $. We
may assume that $\nu ^*$ differs from $\nu$ by at least one
non-trivial tree corresponding to a pair of equivalent discs. We
obtained a neuron $n^* =(\nu^*,m^*,\Phi,\overset{\circ}m^*)$ with
halo. The second assertion of the lemma is clear.
\end{proof}

Let $n = (\nu,m,\Phi,\overset{\circ}{m})$ be a neuron. Parametrize
the pellicle $m$ of $\nu$ by the unit circle $\partial \D$. Consider
the evaluation mapping of the halo $\overset{\circ}{m}$:
$\overset{\circ}{m}(\zeta,z) \defined
\overset{\circ}{m}(\zeta)(z),\; \zeta  \in \partial \D, \; z \in
\overline \D$. This evaluation mapping is a continuous mapping from
the the set $\partial \D \times \overline \D$ into $X^2$ which is
holomorphic on the disc fibers. (Recall that the mapping
$\overset{\circ}{m}_D$ is a continuous mapping of $\partial D $ into
the space $A^1(\D)$ of holomorphic mappings from the unit disc into
$X^2$ that have $C^1$ extension to the closed unit disc.) Let
$m(\zeta _0)$ be the tip of the axon tree of the neuron. Consider
the (image of the) disc fiber $\overset{\circ}{m}(\zeta_0)(\overline
\D)$ and the union of all (images of) circle fibers $\bigcup_{\zeta
\in
\partial \D} \overset{\circ}{m}(\zeta)(\partial \D)$. The union of
the two sets, $\kappa _n \defined \bigcup_{\zeta \in \partial \D}
\overset{\circ}{m}(\zeta)(\partial \D) \;  \bigcup
\overset{\circ}{m}(\zeta_0)(\overline \D)$ is a compact subset of
$G$ associated to the neuron $n$.

\medskip

The {\bf idea of the proof of Lemma 12} in case $X^2 = \C ^2$ is the
following (see below section 7 for details).

Let $\Phi : \overline { \D} \rightarrow \C^2$ be an analytic disc
with boundary lifting to $\hat G$. Lemma 15 produces a neuron $n$
with halo whose main body coincides with the analytic disc $\Phi$. A
neuron can be considered as a degenerate analytic disc. Mergelyan's
theorem allows uniform approximation of the neuron by a true
analytic disc ("fattening of dendrites", see below section 6).

The domain of definition of the disc is a simply connected smoothly
bounded domain $D$, whose closure contains the generalized disc of
the neuron and approximates it.

If the original neuron had a halo the approximating disc-neuron may
be given a halo. Denote the new disc-neuron with halo by $(D, m_D,
\Phi _D, \overset{\circ}{m}_D)$. Here $m_D$ just denotes the
boundary curve of the domain $D$. In other words, the disc-neuron is
an analytic disc $\Phi _D: \overline D \rightarrow X^2$ with a halo
$\overset{\circ}{m} _D:
\partial D \rightarrow \GN$. The halo defines the following (image of a) torus
$\bigcup _{\zeta \in \partial D} \overset{\circ}{m}_D
(\zeta)(\partial \D)$ consisting of the union of the boundaries of
$G_0$-discs. Call them meridians of the torus. The torus is a
compact subset of $G$ contained in a small neighbourhood of
$\kappa_n$. Approximate solutions of the Riemann-Hilbert boundary
value problem for this torus produce holomorphic discs $f_D:
\overline D \rightarrow X^2$ with boundary in a small neighbourhood
of the torus. Such discs are $G$-discs. Approximate solutions of
Riemann-Hilbert boundary value problems are constructed in
\cite{FoGl}. There is a closed arc $\Gamma \subset
\partial D$ (an arc that is close to the tip of the axon tree of $n$)
such that for $\zeta \in \Gamma$ the meridian $\overset{\circ}{m}_D
(\zeta)(\partial \D)$ bounds an analytic disc {\it of small diameter
contained in} $G$. This implies the following additional property of
approximate solutions of the Riemann-Hilbert boundary value problem.
Given any compact subset $K \subset D \bigcup \Gamma$,  after
possibly squeezing some meridians along the analytic discs bounded
by them, the value $\max_K |f_D - \Phi _D|$ is small compared to the
distance of $\kappa _n$ to the boundary of $G$.  Hence, for each
point in $\Phi _D(K)$ a small translation of the disc $f_D:\overline
D \rightarrow X^2$ produces a $G$-disc through this point. For more
detail see below section 7.

We give an argument different from that in \cite{FoGl} to construct
approximate solutions of the Riemann-Hilbert boundary value problem.
For a curve $\zeta \rightarrow (\zeta, g(\zeta)),\; \zeta \in
\partial D, $ with $g(\zeta) \in \overset{\circ}{m}_D(\zeta)(\partial \D)$ for
each $\zeta \in \partial D$, we consider the winding number around
the meridians. For the approximate solutions of the Riemann-Hilbert
boundary value problem given in \cite{FoGl} the winding number of
the boundary curve grows uncontrolled with the rate of
approximation. This fact and the hope to handle more general
situations are the reasons to choose here an argument that differs
from that in \cite{FoGl}. Namely, instead of squeezing some
meridians of the original torus, we take an open arc
$\overset{\circ}{\Gamma}$ whose closure is contained in the interior
$Int \, \Gamma$ such that $K \cap
\partial \D \subset \overset{\circ}{\Gamma}$, and approximate the mapping
$\overset{\circ}{m}_D :
\partial D \rightarrow \GN$ on $\partial D \setminus \overset{\circ}{\Gamma}$
by a continuous mapping $\overset{\circ}{M}$ from $\overline D$ into
$\G$ that is holomorphic on $D$. Moreover, $\overset{\circ}{M}$ is
chosen so that the evaluation mapping of $\overset{\circ}{M}(\zeta)$
at the point $0 \in \D$ equals $\Phi_D (\zeta)$ for each point
$\zeta \in \overline D$. Here we call a mapping from $D$ into $\G$
holomorphic if it is locally the sum of a power series with
coefficients being $\G$-discs. (The metric in $\G$ is the $C^1$-norm
of the mappings on $\overline \D$. We use the notion of holomorphic
mappings $\overset{\circ}{M}$ into $\G$ only here for the purpose of
explaining the concept. Later we will only use the evaluation
mapping $\overset{\circ}{M}(\zeta)(z),\; \zeta \in \overline D,\; z
\in \overline \D,$ of such a holomorphic mapping which is a
continuous mapping from $\overline D \times \overline \D$ that is
holomorphic on the interior of this set.) The approximating mapping
$\overset{\circ}{M}$ defines a new torus over the boundary of the
domain $D$. The part of the new torus over $\partial D \setminus
\overset{\circ}{\Gamma}$ is close to the respective part of the old
torus. Squeeze the meridians corresponding to points in $\Gamma$ as
much as needed along the analytic discs bounded by them. The thus
obtained tori are still contained in a small neighbourhood of
$\kappa_n$ . There are exact solutions of the corresponding
Riemann-Hilbert boundary value problem with winding number of the
boundary curve not depending on the rate of approximation and of
squeezing of meridians. For details see below section 7.

\medskip

The {\bf proof of lemma 13} is more subtle. Under the conditions of
Lemma 13 there is a homotopy of the disc $\Phi _1$ to an analytic
disc $\Phi _0$ where $\Phi _0$ is embedded into $G$ and lifts to
$\hat i(G)$. The homotopy consists of analytic discs $\Phi _t$ whose
boundaries lift to $\hat G$. We have to take a $G$-disc related to
$\Phi_1$ as constructed by Lemma 12 and find a $G$-disc homotopy to
an analytic disc embedded into $G$.

The key point is to obtain a continuous family $\phi_t$ of neurons
with continuously changing halo and continuously changing axons such
that for $t$ in neighbourhoods of $0$ and of $1$ the main bodies of
the $\phi_t$ coincide with the analytic discs $\Phi _t$.

Indeed, the scheme of proof of lemma 12 applies not only for an
individual neuron with halo but also for continuous families of such
neurons. This observation allows to obtain from the aforementioned
continuous family of neurons a homotopy of $G$-discs. The homotopy
of $G$-discs joins the given $G$-disc obtained in lemma 12 to a disc
embedded into $G$. The conclusion is that each point in $\Phi_1
(\D)$ is contained in the projection of $\hat G$. The existence of a
continuous lift of $\Phi_1$ to $\hat G$ follows from lemma 7 (see
below section 8 for details).

The first step towards the construction of the continuous family of
neurons $\phi_t$ (see below Lemma 19) is to convert the continuously
family of analytic discs $\Phi _t :\overline \D \rightarrow X^2$
into a {\it piecewise} continuous family of preneurons with the
following property. To each of the preneurons an axon can be
attached and the axons can be chosen continuously depending on the
parameter $t$.

The tips of the axons form a curve that is mapped into $G$. Fatten the axons continuously
depending on $t$ (see section 6 below). We obtain
a piecewise continuous family $\Psi _t$ of neurons and a fixed arc $\Gamma $
of the circle mapped into $G$ by all $\Psi _t$. More precisely, the mapping $(t,z) \rightarrow \Psi _t$
is a continuous mapping from $[0,1] \times \Gamma$ into $G$. We may assume that $1 \in \Gamma$.

The mapping $\Psi,\; \Psi(t,\zeta)\defined \Psi_t(\zeta),$
restricted the set $[0,1] \times \Gamma$ lifts to $\GN$. Indeed, any
continuous mapping $\overset{\circ}{\Psi}$ into the set of small
discs embedded into $G$ such that the center of
$\overset{\circ}{\Psi}(t,\zeta)$ equals $\Psi(t,\zeta)$ may serve.

Attaching further dendrites we associate with each of the thus
obtained neurons a new neuron $n_t$ which has already a halo. We do
it in such a way that the halo on $[0,1] \times \Gamma$ equals the
above chosen one and the family $n_t$ is piecewise continuous.

From the piecewise continuous family we get a continuous family of neurons in the following way.
Let $t_0$ be a discontinuity point of the family $n_t$. Let
 $n_{t_0}^-$, and  $n_{t_0}^+$ respectively, be the limit neurons at $t_0$ from
 the left and, from the right respectively. We show that we can attach a
 dendrite ${\bf {\mathfrak{T}}} _{t_0}$ to $n_{t_0}^+$ at a point of $\Gamma $ in such a
 way that $n_{t_0}^+ \cup  {\bf {\mathfrak{T}} _{t_0}}$ has a halo and
there is a homotopy of neurons with halo joining $n_{t_0}^-$ with
$n_{t_0}^+ \cup  {\bf {\mathfrak{T}} _{t_0}}$. A continuously
changing copy of the dendrite ${\bf {\mathfrak{T}}} _{t_0}$ will be
attached to all neurons $n_t$ with $t>t_0$. We proceed in this way
with each discontinuity point of the family $n_t$.

The most subtle part of the aforementioned proof is the construction
of the homotopy joining $n_{t_0}^-$ with $n_{t_0}^+ \cup  {\bf
{\mathfrak{T}} _{t_0}}$ (see below Lemma 20). This construction will
be a procedure which preserves the main body (which is common for
$n_{t_0}^-$ and $n_{t_0}^+$ ) and can be considered as continuously
"peeling off the halo of the left neuron $n^-_{t_0}$" starting at a
point in $ \Gamma $ and letting "grow the halo of the right neuron
$n^+_{t_0}$ on the peeled places and symmetrically on the inside of
the removed peel".

\section{ Partial fattening of dendrites.}

Here we describe in detail the procedure of "fattening dendrites"
which is used in the proof of Lemmas 12 and 13. In the proof of
lemma 12 the procedure is applied to a single neuron. In the
proof of lemma 13 it is applied to a family of neurons. We will
describe the version for families.

Consider a single generalized disc $\nu = \overline \D \cup \bigcup T_j $.
For each tree $T_j$ we consider a connected open (in the topology
induced on $T_j$ by $\C$) subset $S_j \subset T_j$ which contains
the root of $T_j$. The closure $\overline S_j$ of $S_j$ is again a
tree with root coinciding with that of $T_j$. Each set $\overline
S_j$ contains together with each point the path on $T_j$ connecting
it with the root of $T_j$. A rooted tree $\overline S_j$ obtained in
this way is called a subtree of $T_j$.

Any connected component of $T_j \setminus S_j$ is also a tree (if
the set is not empty). A vertex of such a component may belong to
$\overline S_j$. Since $T_j$ is a tree there is exactly one such
point in each connected component. (This point may be a multiple
vertex.) With this point chosen as root the connected component
becomes a rooted tree. Note that a connected component of $T_j
\setminus S_j$ may consist of several trees adjacent to this root.

Provide a "cutting of trees" : replace each tree $T_j$ by $\overline
S_j$. Denote by $S$ the union of trees $\bigcup \overline S_j$ and
consider the generalized disc  $\nu _S = \overline \D \cup S$. For a
positive number $\tau _0$ we associate to $\nu _S$ a family $E
^{\tau} _S,\; \tau \in (0,\tau _0,]$, of bounded smoothly bounded
simply connected domains with the following properties.

\begin{itemize}
\item[(1)] The sets $E ^{\tau} _S \setminus \D,\; \tau \in (0,\tau _0]$, are contained in a small
neighbourhood of S (i.e. $ E ^{\tau} _S \setminus  \D$ are
fattenings of $S$).

\item[(2)] For each $\tau \in (0,\tau _0]$ the set $E^{\tau}_S$ contains $\D \bigcup \bigcup S_j$.
Moreover for each $\tau$ and each $j$ all leaves of $\overline
S_j$ are on the boundary of $E^{\tau}_S$ and $E^{\tau} _S$ does not
intersect $\bigcup (T_j \setminus S_j)$.
\item[(3)] The family decreases, i.e. $E^{\tau _1} _S \subset E^{\tau _2}
_S$ for $0 < \tau _1 < \tau _2 \leq \tau _0$. Moreover, the family
is continuous and converges to $\nu _S$ for $\tau \rightarrow 0$. We
put $E^0 _S \defined  \nu _S ( = \lim_{\tau \rightarrow 0} E^{\tau} _S )$.
\end{itemize}

Consider the set $\nu ^{\tau} \defined \overline { E^{\tau} _S} \cup
\bigcup T_j$ for $\tau \in [0,\tau _0]$. Note that $\nu ^0 = \nu$.
The $\nu ^{\tau}$ are generalized discs. The trees of $\nu ^{\tau}$
correspond to the connected components of $T_j \setminus S_j$.

\begin{figure}[h]
\centering
\includegraphics[scale=0.75]{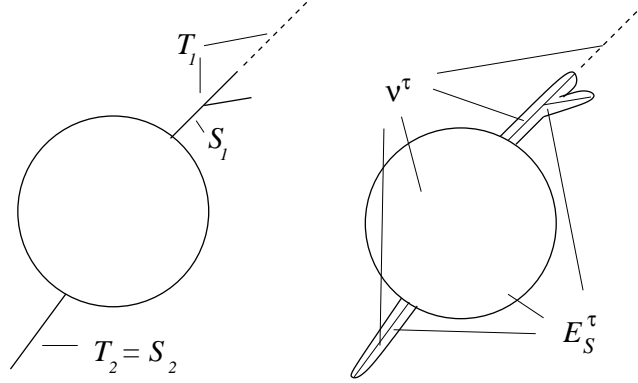}
\caption {Partial fattenings of trees of a generalized disc}
\end{figure}

The described procedure is a "partial fattening of trees". The sets
$E^{\tau}_S \setminus \overline \D$ are the fattenings of $S$. We
always assume that the connected components $E_{S,j}^{\tau}$ of
$E^{\tau}_S \setminus \overline \D$ are in a one-to-one
correspondence with the trees $S_j$.

Note that for a continuous family  $\nu _t,\; t \in [0,\tau _0],$  of generalized
discs and continuous families of unions of subtrees $\bigcup
({\overline S}_j)_t$ of $\bigcup (T_j)_t$
the "partial fattening of trees" can be arranged continuously depending on the
parameter $t$. In other words, it can be made so that it leads to a
family $\nu _t ^{\tau}$ which is continuous in both parameters
$t$ and $\tau$.

In the following lemma we consider neurons. The lemma extends the
procedure of partial fattening of trees to a "partial fattening of
dendrites". For each $t$ the generalized disc is the union of the
closed unit disc with attached trees.

\begin{lemma}\label{Lemma 15} Suppose $n_t = (\nu _t, m_t, \Phi _t), t \in [0,1],$
is a continuous family of neurons. Let $S _t = \bigcup
\overline{(S_j)_t}$ be a continuous family of unions of subtrees of
the trees of their generalized discs $T_t = \bigcup (T_j)_t$. Let
$\nu _t ^{\tau} = \overline { E^{\tau} _t} \cup \bigcup (T_j)_t ,\;
t \in [0,1], \; \tau \in [0,\tau _0],$ be a continuous family of
generalized discs obtained from the $\nu _t$ by fattening the trees
constituting $S_t$. Then there is a continuous family of mappings
$\Phi _t ^{\tau} : \nu _t ^{\tau} \rightarrow X^2, \; t \in [0,1],\;
\tau \in [0,\tau _0]$, that are holomorphic on the interior $E_t
^{\tau}$ of $\nu _t ^{\tau}$ such that $\Phi _t ^0 = \Phi_t$. If the
restriction of $\Phi$ to $\bigcup_{t \in [0,1]} \{t\} \times (\nu _t
\setminus \D)$ has a lift $\hat \Phi$ to $\hat G$ then the
restrictions of $\Phi ^{\tau}$ to $\bigcup_{t \in [0,1]} \{t\}
\times \partial E_t^{\tau}$,  $\tau \in [0,\tau _0],$ have lifts
$\hat \Phi ^{\tau}$ depending continuously on $\tau$.

Let $m_{j,t}$ be the punctured pellicle of $(T_j)_t$ and
$m^{\tau}_{j,t}$ the arc of the pellicle of $\nu_t^{\tau}$ whose
image is contained in  $\partial E_{S,j}^{\tau} \bigcup (T_j
\setminus S_j)$. If for some $j$ all dendrites $(\mathbf{T}_j)_t =
((T_j)_t, m_{t,j}, \Phi _t |(T_j)_t), \; t \in [0,1],$ have
punctured halo $\overset{\circ}{m}_{j,t}$ associated to $\hat \Phi$
that depends continuously on $t$ then (possibly after decreasing
$\tau _0$) also the curves $m^{\tau}_{j,t}$ have a halo
$\overset{\circ}{m}_{j,t}^{\tau}$ associated to $\hat \Phi ^{\tau}$
that depends continuously on $t$ and $ \tau$ and converges to
$\overset{\circ}{m}_{j,t}$ for $\tau \rightarrow 0$.

\end{lemma}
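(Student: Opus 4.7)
The plan is to construct $\Phi_t^\tau$ by a parametric Mergelyan-type approximation, and then transfer the lifts and halos using the local biholomorphicity of $\hat{\mathcal{P}}$ together with the openness of $\GN$ in the $C^1$-topology (Lemma \ref{Lemma 9}).

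For the first step I would fix the proper holomorphic embedding $\mathfrak{F}: X^2 \hookrightarrow \C^4$ from Section 1 and work in the ambient coordinates. For each $t$ the set $\overline{\D} \cup S_t$ is compact in $\C$ with connected complement in the Riemann sphere (the attached subtrees do not form loops), and $\Phi_t$ is continuous on $\nu_t$ and holomorphic on $\D$. A parametric version of Mergelyan's theorem applied to each coordinate of $\mathfrak{F} \circ \Phi_t$, followed by composition with a holomorphic retraction of a tubular neighbourhood of $\mathfrak{F}(X^2)$ onto $\mathfrak{F}(X^2)$, produces functions $\Phi_t^\tau : \nu_t^\tau \rightarrow X^2$ holomorphic on $E_t^\tau$, continuous jointly in $(t,\tau)$, and converging uniformly to $\Phi_t$ on $\nu_t$ as $\tau \rightarrow 0$. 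A small Weierstrass-type interpolation along $\nu_t$ ensures the literal equality $\Phi_t^0 = \Phi_t$ demanded in the statement. Continuity in $t$ is available because the family $\overline{\D} \cup S_t$ varies continuously in the Hausdorff metric, so the Mergelyan kernels can be chosen to depend continuously on $t$.

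For the second step I would exploit that $\hat{\mathcal{P}}: \hat{G} \rightarrow X^2$ is locally biholomorphic and that the existing lift $\hat{\Phi}$ is continuous on the compact set $\bigcup_t \{t\}\times(\nu_t \setminus \D)$. Compactness yields a uniform $\delta > 0$ such that at every point $(t,q)$ in this set the map $\hat{\mathcal{P}}$ is biholomorphic from a neighbourhood of $\hat{\Phi}(t,q)$ onto the ball of radius $\delta$ around $\Phi(t,q)$ in $X^2$. Choosing $\tau_0$ small enough that $\Phi^\tau(\partial E_t^\tau)$ lies in this uniform tube for all $\tau \in [0,\tau_0]$, the local inverse branches of $\hat{\mathcal{P}}$ selected by $\hat{\Phi}$ paste together into the unique continuous lift $\hat{\Phi}^\tau$ agreeing with $\hat{\Phi}$ at $\tau = 0$. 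For the halo, after identifying $m_{j,t}^\tau$ with $m_{j,t}$ by a continuous reparametrization converging to the identity as $\tau \rightarrow 0$, I would obtain $\overset{\circ}{m}^\tau_{j,t}$ by precomposing each $G_0$-disc of $\overset{\circ}{m}_{j,t}$ with a small holomorphic automorphism of a neighbourhood of $\overline{\D}$ repositioning its centre from $\Phi_t \circ m_{j,t}(s)$ to $\Phi_t^\tau \circ m_{j,t}^\tau(s)$; since $\Phi^\tau \rightarrow \Phi$ uniformly in $C^1$, these perturbations are small in the $C^1$-topology, and Lemma \ref{Lemma 9} guarantees that the perturbed discs remain $G_0$-discs.

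The main obstacle is a triple uniformity in $(t,\tau)$: the single constant $\tau_0$ must be chosen small enough to simultaneously dominate the injectivity radius of $\hat{\mathcal{P}}$ along $\hat{\Phi}$, the $C^1$-distance from the values of $\overset{\circ}{m}_{j,t}$ to the boundary of the open set $\GN$, and the Mergelyan approximation error, while still ensuring $\Phi^\tau_t$ is a fibrewise immersion on the fattened region. Compactness of $[0,1]$ and the continuity assumptions on the input data make this possible, but the bookkeeping is delicate, especially at the places where $\partial E_{S,j}^\tau$ meets the retained trees $T_j \setminus S_j$, where the new halo $\overset{\circ}{m}^\tau_{j,t}$ must smoothly match the unchanged halo on $T_j \setminus S_j$.
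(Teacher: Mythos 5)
Your overall approach tracks the paper's proof closely: both rest on a Mergelyan/$\bar\partial$ approximation of $\Phi_t$ on $\overline\D\cup S_t$ (the paper cites Rudin's Theorem 20.5), both reduce the general Stein case to $\C^4$ via the embedding $\mathfrak F$ and a holomorphic tubular-neighbourhood retraction, and both appeal to the openness of $\GN$ in the $C^1$-topology to keep the perturbed discs inside $\GN$.

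There is, however, a genuine error in your halo construction. You propose to obtain $\overset{\circ}{m}^{\tau}_{j,t}(s)$ from $\overset{\circ}{m}_{j,t}(s)$ by \emph{precomposing} with a small holomorphic automorphism of a neighbourhood of $\overline\D$. Precomposition $d\mapsto d\circ\varphi$ changes the parametrization but fixes the image $d(\overline\D)\subset X^2$; in particular the new center $d(\varphi(0))$ is forced to lie inside the \emph{old} disc's image. But the target center $\Phi_t^{\tau}\circ m_{j,t}^{\tau}(s)$ is merely a nearby point of $X^2$ and will generically not lie on $\overset{\circ}{m}_{j,t}(s)(\overline\D)$. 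What is needed is to move the disc in the target: the paper takes the parallel translate in $\C^2$ of $\overset{\circ}{m}_{j,t}(s)$ whose center becomes $\Phi_t^{\tau}\circ m_{j,t}^{\tau}(s)$, and for general $X^2$ replaces translation by the composition $\mathfrak F^{-1}\circ\mathfrak{Pr}\circ(\text{translation in }\C^4)\circ\mathfrak F$. Replace your precomposition by such a postcomposition with a small compactly-defined biholomorphism of $X^2$ and the rest of your argument goes through. A minor additional remark: this lemma does not require the approximating maps to be fiberwise immersions, so that concern in your final paragraph can be dropped (it is handled separately by Lemmas 9 and 10 of the paper).
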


\begin{proof} {\bf In case $X^2 = \C ^2$} the first assertion of the lemma is a
standard approximation lemma for the coordinate functions of the
mappings $\Phi _t$. Let $E_t ^{\tau}$ be the generalized discs
obtained by fattening the trees constituting $S$. The idea of proof
of this approximation lemma is to extend for each $t$ the function
$\Phi _t$ to a continuous function in the whole plane $\C$ and to
smoothen the extension (in dependence on $\tau$) in such a way that
the $\overline{\partial}$-derivative is small near points of $(\nu
_S)_t$ and vanishes on a big compact subset of $\D$. For details we
refer to the book  \cite{Ru} (see the proof of theorem 20.5). The
construction can be made continuously depending on $t$ and $\tau$.
The approximating function $\Phi _t^{\tau}$ is obtained by
correcting the extended and smoothened function by the solution of a
$\overline{\partial}$-equation related to the interior of $ \nu _t
^{\tau}$.

Prove the second assertion for the case $X^2 = \C^2$. For suitable
parametrizations of $m_{t,j}$ and $m_{t,j}^{\tau}$ by  $s \in [0,1]$
we have uniform convergence $m_{t,j}^{\tau} \rightarrow m_{t,j}$ for
$\tau \rightarrow 0$, hence the arc $ \Phi _{t,j} ^{\tau} \circ
m_{t,j}^{\tau}$ in $X^2$ converges to the arc $ \Phi_{t,j} \circ
m_{t,j}$ for $\tau \rightarrow 0$. It remains to make for $s \in
[0,1]$ and small $\tau$ the following choice for
$\overset{\circ}{m}_{j,t}^{\tau}$. Take
the parallel translation in $\C ^2$ of the $G_0$-disc
$\overset{\circ}{m}_{t,j}(s) $ for which the center equals $\Phi
^{\tau} _{t,j} \circ m_{t,j}^{\tau}(s)$.

\medskip

{\bf For general Stein surfaces $X^2$} we consider a holomorphic
embedding $\mathfrak{F}:X^2 \rightarrow \C ^4$. The approximation of
$\mathfrak{F} \circ \Phi _t$ works as in the proof of the first
assertion for $\C ^2$. Given the halo $\overset{\circ}{\mathfrak{F}
\circ {m}_{t,j}}$ on $\mathfrak{F} \circ {m}_{t,j}$, the halo on the
approximating arcs in $\C ^4$ can be chosen by using small
translations. It remains to compose all constructed  mappings (they
all have image in a small tubular neighbourhood of
$\mathfrak{F}(X^2)$)  with a holomorphic projection from the tubular
neighbourhood onto $\mathfrak{F}(X^2)$. The assertions of the Lemma
are proved in the case of general Stein surfaces.

\end{proof}

\section{Proof of lemma 12}

The proof of Lemma 12 is based on the following approximation lemmas
which will be needed also in section 11 below. Let $D$ be a bounded,
smoothly bounded simply connected domain in the complex plane and
let $\Gamma \subset \partial D$ be an arc. Put $\mathcal{S}
_{\partial D } \defined (\overline D \times \{0\})\; \bigcup\;
((\partial D)  \times \overline \D)$. Notice that suitable
neighbourhoods of $\mathcal{S} _{\partial D }$ are usually called
Hartogs figures. In other words,  $\mathcal{S} _{\partial D}$ is the
core of Hartogs figures. Denote the compact subset $(\overline
{\partial D} \times
\partial \D )\; \bigcup \; ( {\Gamma} \times \overline \D)$ of
$\mathcal{S} _{\partial D}$ by $\mathcal{Q}_{\Gamma}$.

Recall that for defining a metric on $X^2$ we fixed a holomorphic
embedding of $X^2$ into $\mathbb{C}^4$ and pulled back the Euclidean
metric. $\varepsilon$-approximation of mappings into $X^2$ refers to
this metric. Note that the second part of Lemma 17 below concerns
continuous families of mappings and is needed in the proof of Lemma
13.

Denote by $A_{X^2}(D \times \D)$ the space of continuous mappings
from $\overline D \times \overline \D$ into $X^2$ that are
holomorphic on the interior $D \times \D$.

\begin{lemma}\label{Lemma 16}
Let $J_D: \mathcal{S} _{\partial D} \rightarrow X^2$ be a continuous
mapping that is analytic on $D\times \{0\}$ and fiberwise analytic
on $\partial D \times \overline \D$. Let $\Gamma \subset
\partial D$ be a closed arc.

Then for each positive number $\varepsilon$ and each neighbourhood
$V$ of $J_D(\mathcal{S} _{\partial D})$ in $X^2$ there exists a
mapping $\H \in A_{X^2}(D \times \D)$, such that

\begin{itemize}
\item [(1)] $\H |\overline D \times \{0\} = J_D | \overline D \times
\{0\}$,

\item [(2)] $\H (\partial D \times \partial \D)$ is contained in an
$\varepsilon$-neighbourhood of $J_D(\mathcal{Q}_{\Gamma})$.

\item [(3)] the image of $\H$ is contained in $V$, moreover,
for each compact subset $K$ of $D \bigcup \Gamma$ the
mapping $\H$ can be chosen so that for each $\zeta \in K$ the whole
fiber $\H (\{\zeta\} \times {\overline \D} )$ is contained in an
$\varepsilon$-neighbourhood of $\Phi _D (\zeta)$.

\end{itemize}

Suppose $D_t$, $t \in [0,1]$, is a continuous family of simply
connected bounded and smoothly bounded planar domains. Let
$\mathfrak{A}_t$ be continuously changing closed arcs,
$\mathfrak{A}_t \subset \partial D_t$. Let further $K_t, \; t \in
[0,1],$ be a family of compact subsets of $D_t \cup \mathfrak{A}_t$
depending continuously on the parameter $t$ (hence $\bigcup _{t \in
[0,1]} \{t\} \times K_t$ is a compact subset of $\mathbb{R} \times
\C$). Consider the continuously changing family of sets $\mathcal{S}
_{\partial D _t}$ and $\mathcal{Q}^t \defined (\overline {\partial D
_t} \times
\partial \D )\; \bigcup \; ( {\mathfrak{A}_t} \times \overline \D)$.

Suppose $J^t_{D_t}: \mathcal{S} _{\partial D _t} \rightarrow X^2
,\;t \in [0,1]$, is a continuous family of mappings, each of it
being analytic on all analytic discs contained in $\mathcal{S}
_{\partial D _t} $.

Then for any number $\varepsilon >0$ there exists a continuous
family of mappings $\H _t \in A_{X^2}(D _t \times \D)$, such that
each $\H _t,\; t \in [0,1],$ satisfies conditions (1),(2) and (3)
above with respect to the objects specified for the number $t$.

\end{lemma}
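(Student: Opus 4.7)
The plan is to reduce the lemma to a scalar approximation problem via the proper embedding $\mathfrak F\colon X^2\hookrightarrow\C^4$, and then to build $\H$ as a finite Taylor sum
\[
\H(\zeta,z)=B_0(\zeta)+\sum_{k=1}^N B_k(\zeta)z^k,
\]
with $B_0:=J_D(\cdot,0)\in A(D)$ exactly (so that (1) is built in), and with $B_k\in A(D)$ obtained from Arakelian-type approximation of the continuous $\zeta$-coefficients of $J_D$ on the proper closed arc $\partial D\setminus\mathring\Gamma$, while keeping $|B_k|$ small on a sub-arc $\mathring\Gamma'\subset\subset\mathring\Gamma$ containing $K\cap\partial D$. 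The two-constants theorem then propagates the smallness from $\overline{\mathring\Gamma'}$ to $K\cap D$ and yields (3).

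For the reduction, post-composition with a holomorphic retraction $\pi$ from a tubular neighborhood of $\mathfrak F(X^2)$ onto $\mathfrak F(X^2)$ lets us work componentwise with $\C$-valued data; so assume $j\colon\mathcal S_{\partial D}\to\C$ is continuous, holomorphic on $\overline D\times\{0\}$ and fiberwise holomorphic on $\partial D\times\overline\D$, with $j(\zeta,z)=\sum_{k\ge 0}a_k(\zeta)z^k$ on $\partial D\times\overline\D$ and $|a_k|\le M$ by Cauchy's inequalities. Rescaling $z\mapsto rz$ with $r\in(0,1)$ close to $1$ gives geometrically decaying coefficients $r^k a_k$; truncation at order $N$ approximates $j(\zeta,rz)$ uniformly on $\partial D\times\overline\D$ with error $O(Mr^{N+1}/(1-r))$, and by continuity of $j$ the image of $j(\zeta,rz)$ on the torus lies in an $O(1-r)$-neighbourhood of $J_D(\partial D\times\overline\D)\subset J_D(\mathcal Q_\Gamma)$.

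Given $K\subset D\cup\Gamma$, choose nested open arcs $\mathring\Gamma'\subset\subset\mathring\Gamma\subset\Gamma$ with $K\cap\partial D\subset\mathring\Gamma'$. The set $E:=(\partial D\setminus\mathring\Gamma)\cup\overline{\mathring\Gamma'}$ is a proper closed subset of $\partial D$ whose complement in $\overline D$ is connected, so Arakelian's theorem (applied componentwise in $\C^4$) produces, for any $\delta>0$, functions $B_k\in A(D)$, $1\le k\le N$, with $\|B_k-r^k a_k\|_{\partial D\setminus\mathring\Gamma}<\delta$, $\|B_k\|_{\overline{\mathring\Gamma'}}<\delta$, and a uniform bound $\|B_k\|_{\overline D}\le M'$. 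Set $B_0:=J_D(\cdot,0)$ and $\H$ as above. Condition (1) is immediate. For (2), on $\partial D\setminus\mathring\Gamma$ the map $\H$ is pointwise close to $j(\zeta,rz)$, which is set-close to $J_D(\partial D\times\partial\D)\subset J_D(\mathcal Q_\Gamma)$; on $\mathring\Gamma\subset\Gamma$ the image $\H(\{\zeta\}\times\overline\D)$ lies in the $(M+M')$-neighbourhood of $\Phi_D(\zeta)$, which after appropriate choice of the approximation parameters is contained in an $\varepsilon$-neighbourhood of the disc $J_D(\zeta,\overline\D)\subset J_D(\Gamma\times\overline\D)\subset J_D(\mathcal Q_\Gamma)$. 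Post-composition with $\pi$ puts the image in $V$. For (3), on $\overline{\mathring\Gamma'}$ we have $|B_k|<\delta$ and on $\partial D$ the bound $|B_k|\le M'$; the two-constants theorem yields
\[
|B_k(\zeta)|\le (M')^{1-\omega(\zeta)}\delta^{\omega(\zeta)},\qquad\zeta\in D,
\]
with $\omega(\zeta)$ the harmonic measure of $\overline{\mathring\Gamma'}$ from $\zeta$. Uniform positivity $\omega\ge\omega_0>0$ on the compact $K\cap D$ and small enough $\delta$ force $|B_k|\le\varepsilon/(N+1)$ on $K\cap D$; on $K\cap\Gamma\subset\mathring\Gamma'$ the bound is immediate, and the fibers of $\H$ over $K$ lie within $\varepsilon$ of $\Phi_D(\zeta)=B_0(\zeta)$.

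For the parametric version, pull back to a fixed disc $\overline\D$ via a $t$-continuous family of Riemann maps $D_t\to\D$, perform the construction on $\partial\D$ with $t$-continuous data (arcs $\mathring\Gamma'_t\subset\mathring\Gamma_t$, truncation order, rescaling $r_t$, and Arakelian approximants $B_{k,t}$), and push forward to a $t$-continuous family $\H_t$; the harmonic measure $\omega_t$ depends continuously on $t$ and is uniformly positive on the compact $\bigcup_t\{t\}\times K_t$. The main obstacle is arranging (1), (2), (3) simultaneously: the Arakelian approximation on a proper closed subset of $\partial D$ with prescribed small values on a sub-arc contained in $\Gamma$ resolves the tension between (2) (boundary-torus approximation allowed in $J_D(\mathcal Q_\Gamma)$, which includes the $\Gamma$-discs) and (3) (fibers over $K$ collapsed to the central disc); the exact choice $B_0=J_D(\cdot,0)$ then secures (1) without further effort, and the two-constants estimate carries the squeezing from the boundary arc $\overline{\mathring\Gamma'}$ into the interior compact $K\cap D$.
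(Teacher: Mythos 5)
Your broad strategy (truncate the fiberwise Taylor series, approximate the coefficients on $\partial D\setminus\overset{\circ}{\Gamma}$ by elements of $A(D)$ while pinning $B_0=J_D(\cdot,0)$, and then control the interior via harmonic measure) is the same as the paper's, but the way you arrange the squeezing over $\Gamma$ has two genuine gaps, and both trace to the same missing idea.

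\textbf{Gap in (3).} You assert that for every $\delta>0$ one can find $B_k\in A(D)$ with $\|B_k-r^ka_k\|_{\partial D\setminus\overset{\circ}{\Gamma}}<\delta$, $\|B_k\|_{\overline{\overset{\circ}{\Gamma}'}}<\delta$, \emph{and} a uniform bound $\|B_k\|_{\overline D}\le M'$ with $M'$ independent of $\delta$. This uniform bound is exactly what the two-constants estimate needs, and it is not justified: the disc-algebra density theorem (this is the relevant tool; Arakelian concerns closed subsets of the open domain, not boundary arcs) gives existence of approximants, but with no a priori sup-norm control. If $M'=M'(\delta)$ can grow as $\delta\to 0$, the bound $\delta^{\omega_0}(M')^{1-\omega_0}$ need not tend to zero when $\omega_0<1/2$, so the argument does not close.

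\textbf{Gap in (2).} Your $B_k$ are only controlled on $\partial D\setminus\overset{\circ}{\Gamma}$ and on $\overline{\overset{\circ}{\Gamma}'}$. On the transition region $\overset{\circ}{\Gamma}\setminus\overline{\overset{\circ}{\Gamma}'}$ you only know $|B_k|\le M'$, so the fiber $\H(\{\zeta\}\times\overline\D)$ may wander a distance of order $NM'$ from $\Phi_D(\zeta)$, and there is no reason it stays in an $\varepsilon$-neighbourhood of $J_D(\mathcal Q_\Gamma)$. The phrase ``after appropriate choice of the approximation parameters'' is where the argument breaks: those parameters were already committed to achieving the approximation on $\partial D\setminus\overset{\circ}{\Gamma}$.

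\textbf{What fixes both.} The paper separates the two roles. First build a single $\mathfrak H\in A_{X^2}(D\times\D)$ (on a neighbourhood $U$ of $\mathcal S_{\partial D\setminus\overset{\circ}{\Gamma}}$) by approximating the coefficients $r^ka_k$ on $\partial D\setminus\overset{\circ}{\Gamma}$ only, with $\mathfrak H(\zeta,0)=J_D(\zeta,0)$; its sup norm is then fixed once and for all. Then take an outer function $g\in A(D)$ with $|g|=1$ on $\partial D\setminus\Gamma$, $|g|\le 1$ everywhere, and $|g|$ as small as you like on a neighbourhood of $\overline{\overset{\circ}{\Gamma}}$, and set $\H(\zeta,z)=\mathfrak H(\zeta,g(\zeta)z)$. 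Because $|g|\le 1$ the image of $\H$ never leaves the image of $\mathfrak H$ (so a uniform bound is automatic), on $\partial D\setminus\Gamma$ the boundary torus is unchanged up to a rotation of the $z$-fiber (so set-closeness to $J_D(\mathcal Q_\Gamma)$ holds, even though coefficientwise closeness does not), and the harmonic-measure estimate is applied to $|g|$ alone, giving the squeeze over $K$ in (3). In other words, the squeezing should be a reparametrization of the $z$-variable, not a modification of the Taylor coefficients; note also that (2) only requires the image set $\H(\partial D\times\partial\D)$ to be close to $J_D(\mathcal Q_\Gamma)$, which is weaker than the coefficientwise approximation you aim for. With this substitution your argument, including the parametric version via continuously varying Riemann maps and outer functions $g_t$, goes through.
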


Fix $K$. Let  $\overset{\circ}{\Gamma}$ be as in section 5 an open
arc, $\overset{\circ}{\Gamma} \Subset Int\, \Gamma$, $K \subset \D
\bigcup \overset{\circ}{\Gamma}$. Denote the set $(\overline D
\times \{0\}) \; \bigcup \; (\partial D \setminus
\overset{\circ}{\Gamma} \times \overline \D)$ by $S_{(\partial D
\setminus \overset{\circ}{\Gamma})}$.
 The proof of Lemma 17
is based on the following variant of the Weierstra{\ss} approximation
theorem for the arc $\partial D \setminus \overset{\circ}{\Gamma}$.

\begin{lemma}\label{Lemma 16a}
For any positive number $\varepsilon$ and any neighbourhood $V$ of
$J_D(\mathcal{S} _{\partial D})$ there exists a neighbourhood $U$ of
$S_{(\partial D \setminus \overset{\circ}{\Gamma})}$ in $\overline D
\times \overline \D$ and a continuous mapping $\mathfrak{H}:U
\rightarrow V \subset X^2$ that is holomorphic on the interior
$Int\, U$ of $U$ such that $\mathfrak{H}|\overline D \times \{0\} =
J_D | \overline D \times \{0\}$ for $\zeta \in \overline D$ and
$\mathfrak{H}$ is uniformly $\varepsilon$-close to $J_D$ on
$(\partial D \setminus \overset{\circ}{\Gamma}) \times \overline
\D$.

\end{lemma}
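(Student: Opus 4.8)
The plan is to prove Lemma~\ref{Lemma 16a} (the final statement) by a localized Weierstra{\ss}-type approximation, reducing to the classical polynomial approximation theorem on an arc after passing to coordinate charts. First I would reduce to the case $X^2=\C^4$ via the fixed proper holomorphic embedding $\mathfrak{F}:X^2\hookrightarrow\C^4$: it suffices to produce the desired mapping with values in $\C^4$ landing in a fixed small tubular neighbourhood of $\mathfrak{F}(X^2)$, and then compose with a holomorphic retraction of that tubular neighbourhood onto $\mathfrak{F}(X^2)$; this composition preserves holomorphicity on $Int\,U$ and only worsens the $\varepsilon$-bound by a controlled constant factor. So from now on the target is $\C^4$ and $J_D$ is given by four coordinate functions; all of what follows is done coordinatewise.

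Next, the set $S_{(\partial D\setminus\overset{\circ}{\Gamma})}=(\overline D\times\{0\})\cup((\partial D\setminus\overset{\circ}{\Gamma})\times\overline\D)$ is a compact set which is a ``Hartogs-figure core over an arc'': the first piece is a totally real (in fact complex) disc slice at level $z=0$, and the second piece fibers the proper closed arc $\partial D\setminus\overset{\circ}{\Gamma}$ by closed discs in the $z$-variable. The key structural point is that $\overline D\times\{0\}$ meets $(\partial D\setminus\overset{\circ}{\Gamma})\times\overline\D$ along the arc $(\partial D\setminus\overset{\circ}{\Gamma})\times\{0\}$, and $J_D$ is already analytic on $\overline D\times\{0\}$ and fiberwise analytic on the second piece, so the two given pieces are compatible on the overlap. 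I would expand $J_D$ on $\partial D\times\overline\D$ as a fiberwise power series in $z$ with continuous coefficients $c_k(\zeta)$, $\zeta\in\partial D$ (convergent a bit past $\overline\D$, using that $J_D$ is analytic in a neighbourhood of the fibers); then the requirement $\mathfrak{H}|\overline D\times\{0\}=J_D|\overline D\times\{0\}$ forces the $z^0$-term to agree with the given analytic function $\Phi_D$ on $\overline D$, so I only need to approximate the tail $\sum_{k\ge 1}c_k(\zeta)z^k$ on the arc $\partial D\setminus\overset{\circ}{\Gamma}$ while keeping it holomorphic on a neighbourhood of $Int\,U$ and not disturbing the $z=0$ slice. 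Truncating the series to finitely many terms $k\le N$ (uniform convergence makes the error $<\varepsilon/3$), the problem becomes: approximate the finitely many coefficient functions $c_1,\dots,c_N$ uniformly on the proper closed arc $\partial D\setminus\overset{\circ}{\Gamma}$ of $\partial D\simeq\partial\D$ by functions holomorphic in a planar neighbourhood of that arc, with the additional normalization that the $z^0$-term is left exactly equal to $\Phi_D|_{\overline D}$ — and this last point is automatic because I only modify the $k\ge 1$ coefficients. Since a proper closed subarc of the circle has a connected, simply connected complement in $\hat\C$, classical Mergelyan/Runge approximation gives holomorphic (indeed rational or polynomial) approximants $\tilde c_k$ on a neighbourhood of that arc; the mapping $\mathfrak{H}(\zeta,z)\defined\Phi_D(\zeta)+\sum_{k=1}^N\tilde c_k(\zeta)z^k$ is then holomorphic on a neighbourhood $Int\,U$ of $S_{(\partial D\setminus\overset{\circ}{\Gamma})}$, satisfies $\mathfrak{H}|\overline D\times\{0\}=\Phi_D=J_D|\overline D\times\{0\}$ exactly, and is uniformly $\varepsilon$-close to $J_D$ on $(\partial D\setminus\overset{\circ}{\Gamma})\times\overline\D$ provided $N$ is large and the $\tilde c_k$ are close enough. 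Shrinking the neighbourhood $U$ as needed keeps the image inside the prescribed neighbourhood $V$ of $J_D(\mathcal{S}_{\partial D})$, by continuity and the $\varepsilon$-closeness.

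The main obstacle — and the reason the statement is phrased with the open arc $\overset{\circ}{\Gamma}$ removed — is exactly that one cannot run this argument on all of $\partial D$: the full circle has complement $\D\cup(\hat\C\setminus\overline\D)$, which is disconnected, so $\mathcal{O}(\overline\D)$-approximation of an arbitrary continuous boundary function fails (there is no global polynomial approximant, essentially by the Cauchy-integral obstruction). By excising $\overset{\circ}{\Gamma}$ one works over a proper closed subarc, whose complement in the sphere is connected and simply connected, which is precisely what makes Runge/Mergelyan applicable; the price is that the approximant is only controlled on $\partial D\setminus\overset{\circ}{\Gamma}$ and lives on a one-sided neighbourhood, which is all Lemma~\ref{Lemma 16} will need (the behaviour over the remaining arc near $K$ being handled separately in the proof of Lemma~\ref{Lemma 16} by squeezing meridians). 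A secondary technical point to be careful about is the passage between ``analytic on a neighbourhood of the fibers'' and ``fiberwise analytic'', and ensuring the coefficient functions $c_k(\zeta)$ are genuinely continuous up to and slightly past $\partial D$; this is routine from Cauchy estimates on the fiber direction together with the hypothesis that $J_D$ is (fiberwise) analytic and continuous on $\mathcal{S}_{\partial D}$.
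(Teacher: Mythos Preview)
Your proposal is correct and follows essentially the same route as the paper: reduce to $\C^n$ via the fixed embedding $\mathfrak{F}$ and a holomorphic tubular retraction, expand $J_D$ fiberwise as a power series in $z$, truncate, approximate the coefficients $c_k$ for $k\ge 1$ uniformly on the closed arc $\partial D\setminus\overset{\circ}{\Gamma}$ by Weierstra{\ss}/Mergelyan (keeping the $z^0$-term equal to $\Phi_D$ so that $\mathfrak{H}|\overline D\times\{0\}=J_D|\overline D\times\{0\}$), and finally shrink $U$ to force the image into $V$. One small point worth tightening: for $\mathfrak{H}$ to be defined and holomorphic on a neighbourhood of the \emph{whole} set $S_{(\partial D\setminus\overset{\circ}{\Gamma})}$, which contains all of $\overline D\times\{0\}$, the approximants $\tilde c_k$ must be holomorphic on a neighbourhood of $\overline D$, not merely on a neighbourhood of the arc; your parenthetical ``indeed rational or polynomial'' is exactly what is needed here, and the paper makes this choice explicit.
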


\begin{proof}
In case $X^2$ is different from $\C^2$ we compose the mapping
${J}_D$ with the holomorphic embedding $\mathfrak{F}$ of $X^2$ into
$\C^4$. Denote the composition by $\mathbf{J}_D$. The target space
for this mappings is $\C^4$. In case $X^2 = \C^2$ the target space
was $\C^2$ from the beginning. For unifying notation we use the fat
letter $\mathbf{J}_D$ for the mapping $J_D$ in this case as well. So
in any case $\mathbf{J}_D$ is a mapping into some $\C^n$ (either
$n=2$ or $n=4$).

Notice that for $r \in (0,1),\; r \rightarrow 1,$ the mappings
$\mathbf{J} _{D,r},\; \mathbf{J} _{D,r}(\zeta, z)\defined \mathbf{J}
_D(\zeta, rz), \; \zeta  \in \bd D \setminus
\overset{\circ}{\Gamma}, z \in \overline \D$, converge uniformly to
$\mathbf{J}_D (\zeta,z),\; \zeta  \in \bd D \setminus
\overset{\circ}{\Gamma}, z \in \overline \D$.

Write the mapping  $\mathbf{J}_D | (\partial D \setminus \overset{\circ}{\Gamma}) \times \overline \D$
in form of power series:

\begin{equation*}
 \sum_{k=0}^{\infty} a_k(\zeta) z^k,\;\; \zeta  \in \partial D \setminus \overset{\circ}{\Gamma},\;\; z \in \overline {\D}.
\end{equation*}

Choose a number $r<1$ sufficiently close to $1$
and a big enough number $N$ so that the mapping

\begin{equation*}
\mathbf{J}_{D,r,N}(\zeta,z) \defined  \sum_{k=0}^{N} a_k(\zeta)r^k
z^k, \;\;\zeta  \in \partial D \setminus
\overset{\circ}{\Gamma},\;\; z \in {\overline \D},
\end{equation*}

\noindent approximates the mapping $\mathbf{J}_D$ sufficiently well
on $(\partial D \setminus \overset{\circ}{\Gamma}) \times \overline
\D$. Note that both mappings, $\mathbf{J}_D$ and
$\mathbf{J}_{D,r,N}$ coincide on $\partial D \setminus
\overset{\circ}{\Gamma } \times \{0\} $ with  $\mathbf{J}_D$.
Approximate each of the coefficients $a_k(\zeta),\; k=1,...,N$,
uniformly for $\zeta \in
\partial D \setminus \overset{\circ}{\Gamma}$ by holomorphic
mappings from a neighbourhood of $\overline D$ to $\C^4$. We obtain
a continuous mapping $\mathcal{I}$ from $\overline D \times
\overline \D$ into $\C ^n$ which is holomorphic on $D \times \D$,
approximates the mapping $\mathbf{J}_D$ uniformly on $(\partial D
\setminus \overset{\circ}{\Gamma}) \,\times\ \overline {\D}$ and
coincides with $\mathbf{J}_D$ on $\overline D \times \{0\}$.

Being uniformly close to $\mathbf{J}_D$ on $S_{(\partial D \setminus
\overset{\circ}{\Gamma})}$ the mapping $\mathcal{I}$ maps a
neighbourhood $U$ of this set (in $\overline D \times \overline \D$)
into a small tubular neighbourhood of $\mathfrak{F}(X^2)$. (Recall
that $\mathbf{J}_D (S_{(\partial D \setminus
\overset{\circ}{\Gamma})}) \subset \mathfrak{F}(X^2)$.) Consider the
composition $\mathfrak{Pr} \circ \mathcal{I} $ of the mapping
$\mathcal{I}$ with a holomorphic projection $\mathfrak{Pr}$ of a
tubular neighbourhood of $\mathfrak{F}(X^2)$ onto
$\mathfrak{F}(X^2)$ and apply to it the inverse of $\mathfrak{F}$ we
obtain a holomorphic mapping $\mathfrak{H}$ from $U$ into $X^2$ that
approximates ${J}_D$ on $(\partial D \setminus
\overset{\circ}{\Gamma}) \,\times\ \overline {\D}$. If $U$ is chosen
small enough depending on $V$ the image of $\mathfrak{H}$ is
contained in $V$.

\end{proof}

\noindent {\it Proof of Lemma 17.}
Notice that for each $\zeta \in
\partial D \setminus {\overset{\circ}{\Gamma}} $ the set $U$ of
Lemma 18 contains the fiber $\{\zeta\} \times \overline \D$. For
$\zeta \in {\overset{\circ}{\Gamma}} $ the set $U$ may not contain
the respective fibers but it contains a small neighbourhood of
${\overset{\circ}{\Gamma}} \times \{0\}$.
We want to shrink the fibers over points in $\Gamma$ suitably. Take
a smooth positive function $\rho$ on $\bd D$ that equals $1$ outside
$\Gamma$, does not exceed $1$ everywhere on $\bd D$ and is as small
as needed in a neighbourhood of the closure of
$\overset{\circ}{\Gamma}$.

Consider an analytic function $g$ on $D$ with boundary values having
absolute value $\rho$. The function $g$ is smooth up to the boundary
if $\rho$ is smooth. (Recall that $D$ has smooth boundary.)
Moreover, on the compact subset $K$ of $D \bigcup
\overset{\circ}{\Gamma}$ the absolute value $|g|$ of the function
does not exceed a small constant depending on the compact set $K$
and the function $\rho$ and tending to $0$ if the maximum of the
function $\rho$ on $\overset{\circ}{\Gamma}$ tends to $0$ . This is
a consequence of an estimate of the harmonic measure of
$\overset{\circ}{\Gamma}$ on $K$.

Define the mapping $\Upsilon ^g,\; \Upsilon ^g (\zeta,z)
\defined (\zeta, g(\zeta)z)$ of the closed bidisc $\overline D
\times \overline \D$ onto $U^g$, $U_g
\defined \{(\zeta, z) \in \overline D \times \overline \D: |z| \le
|g(\zeta)|\}$. With a suitable choice of $\rho$ for each fixed $z
\in \overline \D$ the distance $|\Upsilon ^g (\zeta,z) - (\zeta,0)|$
is as close as needed uniformly for $\zeta \in K$.

Increasing the compact subset $K$ of $D \bigcup
\overset{\circ}{\Gamma}$ we may assume that each point $\zeta$
outside the compact $K$ is as close as needed to $\partial D
\setminus \overset{\circ}{\Gamma}$. Therefore the choice of the
function $\rho$ can be made in such a way that the set $U_g $ is
contained in the small neighbourhood $U$ of $\mathcal{S}_{\bd D
\setminus \overset{\circ}{\Gamma}}$ in $\overline D \times \overline
\D$.

Let $\H$ be the composition of the mapping $\mathfrak{H}$ with the
mapping $\Upsilon ^g,\; \Upsilon ^g (\zeta,z) \defined (\zeta,
g(\zeta)z)$ of the closed bidisc $\overline D \times \overline \D$
onto $U^g$, $\H = \mathfrak{H} \circ \Upsilon ^g $. The mapping $\H$
has the required properties.

Indeed, since $\rho$ has absolute value $1$ on $\partial D \setminus
{\Gamma}$ and absolute value not exceeding $1$ on $\Gamma \setminus
\overset{\circ}{\Gamma}$ the set $\H (\partial D \setminus
\overset{\circ}{\Gamma} \times \partial \D)$ is contained in a small
neighbourhood of $J_D (\mathcal{Q}_{\Gamma})$. (See Lemma 18 for the
properties of $\mathfrak{H}$ and use the fact that
$\mathcal{Q}_{\Gamma} \supset(\partial D \setminus
\overset{\circ}{\Gamma} \times
\partial \D)\; \bigcup \; (\Gamma \setminus \overset{\circ}{\Gamma} \times
\overline \D$).) If $\rho$ is small enough on
$\overset{\circ}{\Gamma}$ then also $\H (\overset{\circ}{\Gamma}
\times \partial \D)$ is contained in a small neighbourhood of $J_D
(\mathcal{Q}_{\Gamma})$.

Property (3) is a consequence of the properties of $\Upsilon ^g$.

The proof of the respective assertion for continuous families of
mappings $J^t_{D_t}$ is straightforward.

Lemma 17 is proved. \hfill $\square$

\medskip

\noindent {\it Proof of Lemma 12}. Let $\Phi :\overline \D
\rightarrow X^2$ be an analytic disc whose boundary lifts to a
mapping  $\hat{\Phi} : \partial \D \rightarrow  \hat G$. Lemma 15
produces a neuron $n=(\nu,m,\Phi)$ which has halo
$\overset{\circ}{m}$ associated to $\hat \Phi $ and has the disc as
main body. Apply Lemma 16 ("fattening of dendrites") for the single
neuron $n$, its halo and the set of all trees of its generalized
disc $\nu$, so that we obtain a true analytic disc with halo $(D,
m_D, \Phi _D , \overset{\circ}{m}_D)$. We assume that $\Phi_D$ is an
$\varepsilon$-approximation of $\Phi$ and $\overset{\circ}{m}_D$ is
an $\varepsilon$-approximation of $\overset{\circ}{m}$. The
evaluation mapping of the halo $\overset{\circ}{m}_D $ defines a
continuous mapping from the set $\partial D \times \overline \D $
into $X^2$ which is fiberwise holomorphic. Moreover,
$\overset{\circ}{m}_D(\zeta) (0) = \Phi_D (\zeta)$ for all $\zeta
\in \partial D$. Thus, $\Phi_D: \overline D \rightarrow X^2$ and
$\overset{\circ}{m}_D:\partial D \times \overline \D \rightarrow X^2
$ define a continuous mapping $J_D$ from the set $\mathcal{S}
_{\partial D } = (\overline D \times \{0\})\; \bigcup\; ((\partial
D)  \times \overline \D)$ into $X^2$.

Let $  \Phi _D: \Gamma \rightarrow X^2$, $\Gamma \subset \partial D
$,  be a closed arc of the pellicle of $D$ that is close enough to
the tip of the axon tree of the original neuron. Then for the subset
$\mathcal{Q}_{\Gamma}=(\overline {\partial D} \times
\partial \D )\; \bigcup \; ( {\Gamma} \times \overline \D)$ of
$\mathcal{S} _{\partial D}$  the set $J_D (\mathcal{Q}_{\Gamma})$ is
contained in a $2 \varepsilon$-neighbourhood of $\kappa _n \subset
G$ (see the definition of $\kappa _n$ after the proof of Lemma 15).
An application of Lemma 17 with the same number $\varepsilon $ and
with a compact subset $K$ of $\D \bigcup \overset {\circ}{\Gamma}$
provides a mapping $\H \in A_{X^2}(D \times \D)$, such that $\H
(\partial D \times
\partial \D)$ is contained in an $\varepsilon$-neighbourhood of
$J_D(Q_{\Gamma})$ and for each fixed $\zeta \in K$ the fiber $\H
(\{\zeta\} \times \overline \D)$ is $\varepsilon$-close to $\Phi _D
(\zeta)$ on $K$.


For each $z \in \partial \D$ the disc $f^z (\zeta) = \H(\zeta,z),\;
\zeta \in \overline D,$ has its boundary in a $3
\varepsilon$-neighbourhood of $\kappa _n \subset G$. The family
$f^{rz}, r \in [0,1],$ provides a homotopy joining $\Phi_D(\cdot) =
J_D(\cdot,0)$ and $f^z$. If $\H$ is chosen to satisfy (3) for given
$K \subset D \cup \Gamma$ then $\max _K |\Phi _D -f ^{rz}| <
\varepsilon $ for each $r \in [0,1]$. Choose the point $z \in
\partial \D$. An $\varepsilon$-approximation of $f^z$ provides an immersed
analytic disc, hence a $G$-disc provided $\varepsilon$ is small.

In case $X^2 = \mathbb{C}^2$ a suitable translation of the disc
passes through $\Phi (p)$ and has boundary contained in a $5
\varepsilon$-neighbourhood of $\kappa _n$.

In the case of general Stein manifolds $X^2$ translations can be
replaced by diffeomorphisms close to the identity from a suitable
relatively compact subset of $X^2$ onto another subset of $X^2$.
Such diffeomophisms are defined as compositions of the holomorphic
embedding $\mathfrak{F}$ of $X^2$ into $\C ^4$, a small translation
in $\C ^4$, a holomorphic projection of a tubular neighbourhood of
$\mathfrak{F}(X^2)$ to $\mathfrak{F}(X^2)$ and the inverse of the
mapping $\mathfrak{F}$.

We proved that through each point of $\Phi_D (K)$ passes a $G$-disc.
Given $\zeta \in \D$ the compact set $K$ can be chosen to contain
$\zeta$. Lemma 12 is proved. \hfill $\square$

\section{A piecewise continuous family of neurons with continuously changing axon}

This paragraph is a preparation for the proof of Lemma 13.

Let $\Phi _t : \overline \D \rightarrow X^2, \, t \in [0,1]$,  be a
continuous family of analytic discs enjoying properties (1) and (2)
of Lemma 11. The following lemma allows a further improvement of the
properties of the family of analytic discs without changing the
discs $\Phi _0$ and $\Phi _1$.

\begin{lemma}\label{Lemma 17}

There is a continuous family of analytic discs $\Psi _t : \overline
\D \rightarrow X^2,\; t \in [0,1],$ coinciding with the previous
family $\Phi_t$ for $t$ close to $0$ and close to $1$ such that
condition (1) and (2) of Lemma 11 hold and the following additional
condition is satisfied.

The curve  $\alpha (t) = (t,1)$, $t \in [0,1],$ in $[0,1] \times \bd
\D \subset \mathfrak{c}$ has the following property: the mapping $
{\Psi}_t (\alpha (t)), \;t \in [0,1]$, admits a lift
$\overset{\circ}{\alpha}$ to $\GN$ such that $\hat{\mathcal{P}}_0
\circ \overset{\circ}{\alpha} = \hat \psi _t (\alpha (t))$.

\end{lemma}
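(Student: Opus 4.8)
The plan is to obtain $\Psi_t$ from $\Phi_t$ by two successive modifications: first making the family \emph{piecewise} continuously liftable over the point $1\in\partial\D$ with only finitely many jumps, and then absorbing each jump into a continuously varying dendrite attached along $\{1\}\times\partial\D$. I work with the curve $\alpha(t)=(t,1)$ and the restriction of the (partially defined) lift $\hat\psi_t$ to $\mathfrak c$. Since $\mathfrak c\supset[0,1]\times\partial\D$, the hypothesis of Lemma~11 already provides $\hat\Phi_t(\alpha(t))\in\hat G$ for every $t$; the difficulty is only that the \emph{choice of representing $G_0$-disc} centered at $\Phi_t(\alpha(t))$ need not vary continuously in $t$. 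So I first cover $[0,1]$ by finitely many closed subintervals $[t_{k-1},t_k]$ on each of which one \emph{can} choose a continuous family of $G_0$-discs $d_t$ representing $\hat\Phi_t(\alpha(t))$ (this is possible by Lemma~9, since near any single parameter value a small perturbation of a representing disc still represents the same class in $\hat G$, compactness giving finiteness). At each division point $t_k$ one then has two equally centered $G_0$-discs $d^-_{t_k}$ and $d^+_{t_k}$ representing the \emph{same} class $\hat\Phi_{t_k}(\alpha(t_k))$, hence — by Lemma~2 together with the way $\hat G$ was built (Proposition~1, Definition~3) — they are equivalent.

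The second, main, step is to remove these finitely many jumps. For each $k$, Lemma~6 produces a dendrite $(\mathbf{T}_k, m_{\mathbf T_k},\Phi_{\mathbf T_k},\overset{\circ}{m}_{\mathbf T_k})$ associated to the equivalent pair $(d^-_{t_k},d^+_{t_k})$, with punctured halo taking the value $d^-_{t_k}$ at the initial point and $d^+_{t_k}$ at the terminal point of its punctured pellicle, and with a lift $\hat\Phi_{\mathbf T_k}:\mathbf T_k\to\hat G$. The idea is now to \emph{reparametrize the interval $[0,1]$}, inserting a small closed subinterval $J_k$ around each $t_k$, and to let $t\mapsto\Psi_t(\alpha(t))$ traverse, for $t\in J_k$, the punctured pellicle $\Phi_{\mathbf T_k}\circ m_{\mathbf T_k}$ instead of sitting at the single point $\Phi_{t_k}(\alpha(t_k))$; on the complementary intervals $\Psi_t$ agrees (after the reparametrization) with the old $\Phi_t$, so in particular $\Psi_t=\Phi_t$ near $t=0$ and $t=1$ as required (the insertion can be kept away from the endpoints). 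The punctured halo $\overset{\circ}{m}_{\mathbf T_k}$ then furnishes the continuous lift $\overset{\circ}{\alpha}$ over $J_k$, and on the complementary intervals the continuous families $d_t$ chosen in step one furnish it; these match at the junctions by the defining values $d^\pm_{t_k}$ of $\overset{\circ}{m}_{\mathbf T_k}$. Finally one must upgrade this to an honest \emph{family of analytic discs} $\Psi_t:\overline\D\to X^2$ (not merely a curve of $G_0$-discs over $\alpha$): for $t$ in the old part of the interval one keeps $\Phi_t$ (which still satisfies (1),(2) of Lemma~11), and for $t\in J_k$ one interpolates through a continuous family of $G_0$-discs whose boundaries lift to $\hat G$ — for instance by taking $\Psi_t$ to be the member of the halo $\overset{\circ}{m}_{\mathbf T_k}$ at the parameter corresponding to $t$, reparametrized so that its center is $\Psi_t(\alpha(t))$; conditions (1) and (2) of Lemma~11 persist because the leaves of $\mathbf T_k$ are mapped into $G$ by small embedded discs (Lemma~6) and the rest of the halo consists of $G_0$-discs, while near $0$ and $1$ nothing has changed.

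The step I expect to be the main obstacle is ensuring that the inserted segments can be threaded so that \emph{both} the lift $\overset{\circ}{\alpha}$ to $\GN$ over the one-dimensional curve $\alpha$ \emph{and} the two-parameter conditions (1)--(2) of Lemma~11 (liftability of $\Psi\mid\mathfrak c$ to $\hat G$, and the embedded-into-$G$ initial disc) survive simultaneously — i.e. the enlargement along $\{1\}\times\partial\D$ must be compatible with the already-existing lift of $\Psi$ over all of $\mathfrak c$, not just over $\alpha$. The way to handle this is to note that the dendrite $\mathbf T_k$ together with its lift $\hat\Phi_{\mathbf T_k}$ provides simultaneously the halo (a family of $G_0$-discs, i.e. the $\GN$-valued lift over $\alpha$) and the $\hat G$-valued lift of the centers (via $\hat{\mathcal P}_0\circ\overset{\circ}{m}_{\mathbf T_k}=\hat\Phi_{\mathbf T_k}\circ m_{\mathbf T_k}$, the last clause of Lemma~6), so the two data are constructed from a single source and are automatically consistent; and the interpolating discs $\Psi_t$, $t\in J_k$, being members of that same halo, have boundaries lying in $G$ and lifting to $\hat G$, so $\Psi\mid\mathfrak c$ still lifts. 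Everything else — continuity in $t$, the reparametrization, the matching at junctions — is routine.
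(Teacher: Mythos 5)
Your first two steps track the paper's argument faithfully: covering $[0,1]$ by subintervals on which a continuous family of representing $\GN$-discs can be chosen, obtaining equivalent pairs $(d^-_{t_k},d^+_{t_k})$ at the junctions, and invoking Lemma~\ref{Lemma 6} to produce a dendrite with punctured halo for each such pair is precisely what the paper's Lemma~\ref{Lemma 13a} does when applied to the curve $\alpha$.

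The gap is in the step you dismiss as ``routine'': actually producing a \emph{continuous} family of analytic discs $\Psi_t:\overline\D\to X^2$. On the inserted interval $J_k$ you propose to take $\Psi_t$ to be ``the member of the halo $\overset{\circ}{m}_{\mathbf T_k}$ at the parameter corresponding to $t$.'' But the halo values are $G_0$-discs of small diameter centered along the dendrite curve; they are in no way close (in $C^1(\overline\D)$) to the large analytic disc $\Phi_{t_k}$ with which your $\Psi_t$ must agree at the endpoints of $J_k$. So the family $\Psi_t$ you describe has a jump discontinuity at $\partial J_k$, and the construction does not produce a continuous family at all. Reparametrizing $\Phi_{t_k}$ by M\"obius maps also cannot work: any such reparametrization keeps the image $\Phi_{t_k}(\overline\D)$ fixed, whereas $\Psi_t(1)$ is required to traverse the image of the dendrite tree, which generically leaves $\Phi_{t_k}(\overline\D)$.

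The missing idea is the \emph{partial fattening of dendrites} (Lemma~\ref{Lemma 15}). The paper first replaces the piecewise continuous family of generalized discs (unit disc, with the tree $T_k$ attached at $1$ only for $t=t_k$) by a genuinely continuous family $\nu^*_t$ by continuously growing/shrinking copies of the trees (possible because each rooted tree contracts to its root). It then applies Lemma~\ref{Lemma 15} to fatten these trees into a continuous family of smoothly bounded simply connected domains $D_t\supset\overline\D$ together with holomorphic extensions $\psi_t:\overline{D_t}\to X^2$ of $\Phi_t$ that still carry lifts $\hat\psi_t$ of their boundary to $\hat G$. Finally it chooses a continuous family of conformal maps $\varphi_t:\D\to D_t$ sending $1\in\partial\D$ to the appropriate boundary point (the deformed excrescence curve $\alpha^0(t)$), and sets $\Psi_t=\psi_t\circ\varphi_t$. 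This produces a genuinely continuous family of ``big'' analytic discs, close to the $\Phi_t$, whose value at $1$ sweeps out the dendrite pellicle. Without the fattening-plus-conformal-reparametrization step, there is no continuous family, and the lemma is not proved.
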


\begin{proof}
Consider the mapping $\Phi (t,z) \defined \Phi_t(z), \; t \in
[0,1],\; z \in \overline \D$ with values in $X^2$. By the condition
(1) of Lemma 11 the restriction of this mapping to $[0,1] \times
\partial \D$ lifts to $\hat G$, hence the mapping $\Phi \circ \alpha(t), \; t \in [0,1], $
lifts to $\hat G$. The curve $\alpha$ is contained in the cylinder
$[0,1] \times \partial \D$. It can therefore be considered as a
planar curve and Lemma 14 applies. It will be convenient to realize
the excrescence of $\alpha$ in a slightly different way. Namely,
consider a tree and its punctured pellicle which participate in the
construction of the excrescence of $\alpha$ in the cylinder. Let the
root of the considered tree be the point $(t_i,1)$ of the cylinder.
We may assume that all points $t_i$ are contained in the open
interval $(0,1)$. We take another realization of the tree and its
pellicle, namely, we consider a tree $T_i$ in the complex plane with
root at the point $1$ that meets the closed disc $\overline \D$
exactly at the root and which is a homeomorphic copy of the tree in
the cylinder. Call the product of the one-point set $\{t_i\}$ with
the punctured pellicle of the tree $T_i \subset \C$ the punctured
pellicle of $\{t_i\} \times T_i$. Cut $\alpha$ at the point
$(t_i,1)$ and paste the punctured pellicle of the tree $\{t_i\}
\times T_i$. Doing this with all trees we obtain the realization of
the excrescence $\alpha^*$ we will work with.

The trees $T_i$ define a  piecewise continuous family of generalized
discs $\nu _t, t \in [0,1],$ given by the relation $\nu _t
\defined \overline \D$, if $t$ is not equal to one of the $t_j$, and
$\nu _t
\defined \overline \D \bigcup T_j$, if $t = t_j$. The new
curve $\alpha ^* $ has values in $\bigcup _{t \in [0,1]} \{t\}
\times \nu _t$. By Lemma 14 there are continuous extensions of the
mappings $\Phi$ and $\hat \Phi$ to the image of $\alpha^*$ such that
the curve $\Phi \circ \alpha ^*$ has a lift to $\GN$ that is
associated to $\hat \Phi$. Take a $C^0$-small deformation of the
curve $\alpha ^*$ which fixes the punctured pellicles of the trees
and provides small changes of the original part $\alpha$ of the
curve $\alpha^*$ so that the image of the deformation of the part
$\alpha$ of $\alpha^*$ is the union of finitely many vertical
segments of the form $I_z \times \{z\}$ for an interval $I_z \subset
[0,1]$ and a point $z \in
\partial \D $, and finitely many horizontal arcs of the form $\{t_j\}
\times \beta _j$ for one of the aforementioned points $t_j \in
[0,1]$ and an arc $\beta_j$  in the unit circle. We may assume that
the perturbed curve coincides with the previous one near the points
$(0,1)$ and $(1,1)$. Denote the approximating curve again by $\alpha
^*$. Still, $\Phi \circ \alpha ^*$ has a lift to $\GN$ that is
associated to $\hat \Phi$.

Consider the piecewise continuous family of generalized discs $\nu
_t, \; t \in [0,1],$ that was defined above. Notice that the image
of $\alpha ^*$ is contained in $\bigcup _{t \in [0,1]} \{t\} \times
(\nu _t \setminus \D)$ and the mappings $\Phi$
and $\hat{\Phi}$ extend continuously to the union $\{t\} \times (\nu
_t \setminus \D)$ of the boundaries of the generalized discs.

Replace the family of generalized discs $\nu _t$ by a {\it
continuous} family of generalized discs $\nu ^*_t$ in the following
way. Choose small disjoint intervals $I_j \subset (0,1)$ around
$t_j$ and define a continuous family of trees $T(t), \; t \in
[0,1],$ with root $1$ such that $T(t_j) = T_j$ and $T(t)$ is equal
to a one point (degenerate) tree for $t$ close to the endpoints of
the $I_j$ and outside the $I_j$. This is possible since each rooted
tree is contractible to its root. Put $\nu ^*_t \defined \nu _t \cup
T(t)$.

The intervals and the contractions of the trees can be chosen in
such a way that the mappings $\Phi $ and $\hat \Phi$ extend
continuously to $\bigcup _{t \in [0,1]} \{t\} \times (\nu ^*_t
\setminus \D) $. Denote the extended mappings again by $\Phi$ and
$\hat \Phi$.

Lemma 16 provides fattenings of the dendrites $\mathbf{T}_t$
depending continuously on the parameter $t$. This yields a
continuous family of simply connected domains $D_t, \; t \in [0,1],$
and a continuous mapping $\psi : \bigcup _{t \in [0,1]} \{t\} \times
\overline {D_t} \rightarrow X^2$ which is holomorphic on each $\{t\}
\times D_t$, approximates $\Phi$ uniformly on $\bigcup \{t\} \times
\nu _t$ and coincides with $\Phi$ for values of $t$ close to $0$ and
close to $1$. Moreover, the restriction of the mapping $\psi$ to the
set $\bigcup _{t \in [0,1)} \{t\} \times \overline D_t \; \bigcup \,
\bigcup _{t \in [0,1]} \{t\} \times \partial D_t) $ lifts to a
mapping $\hat \psi$ into $\hat G$ which coincides with $\hat \Phi$
for $t$ close to $0$ and close to $1$.

Deform the arcs of $\alpha ^*$ contained in the set $t = t_j$ into
arcs that are $C^0$-close to the previous ones and run along the
boundary $\{t_j\} \times \partial D _{t_j}$. Denote the deformed
curve by $\alpha ^0$.

\begin{figure}[h]
\centering
\includegraphics[scale=0.4]{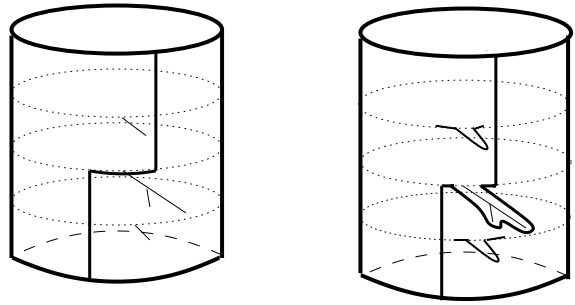}
\caption{Fattening of trees of a family and deformation of the
excrescence}
\end{figure}

Provide a further deformation of the curve so that its
$t$-coordinate is strictly increasing. Parametrize the thus obtained
curve by the $t$-coordinate of its image and denote it again by
$\alpha ^0$. The mapping $\psi \circ \alpha^0$ admits a lift to
$\GN$ which is associated to $\hat \psi$.

Choose a continuous family of conformal mappings $\varphi _t: \D
\rightarrow D_t$ (which extend to a continuous family of
homeomorphisms between the closed unit disc and the closures of the
domains) that map the point $1 \in \partial \D$ to the point $\alpha
^0 (t) \in
\partial D_t$. The mappings $\Psi_t \defined \psi_t \circ \varphi_t$
(with $\psi _t(z) = \psi(t,z) $ for $t \in [0,1]$ and $z \in
\overline \D$) have the desired property.

\end{proof}

Choose an arc $\Gamma$ of the unit circle containing the point $1$
so that the mapping $(t,\zeta) \rightarrow \Psi (t,\zeta) = \Psi _t
(\zeta), \; (t,\zeta) \in [0,1] \times \Gamma,$ lifts to a
continuous mapping $\overset{\circ}{m}: [0,1] \times \Gamma
\rightarrow \GN$ for which $\hat {\mathcal{P}} _0 \circ
\overset{\circ}{m} = \hat \Psi$. In other words, the analytic discs
$\Psi _t $ have continuously changing halo on $\Gamma$ that is
associated to $\hat \Psi _t$.

According to Lemma 15 by attaching dendrites each disc $\Psi _t :
\overline {\D} \rightarrow X^2 $ can be performed into a neuron with
halo associated to the lift $\hat \Psi _t |\partial \D$. This can be
done so that the halo of the neuron on $\Gamma$ coincides with
$\overset{\circ}{m}(t,\cdot), \; t \in [0,1]$. In particular, for
each $t$ the arc $\Gamma$ consists of regular points for the neuron.
Further, the attaching of dendrites may be done in such a way that
the neurons depend piecewise continuously on the parameter $t$.

Consider the constructed neurons as preneurons and attach for each
$t \in [0,1]$ an axon $\mathbf{T}^{ax}_t$ to the respective
(pre)neuron such that the root of its tree is the regular point $1$.
The trees $T^{ax}_t$ of the axons $\mathbf{T}^{ax}_t$ are chosen to
depend continuously on $t$, for $t$ close to $1$ being equal to the
edge $T^{ax}_t = [1,2]$ which is orthogonal to the unit circle, and
degenerated to a point for $t$ close to $0$. In particular, the tips
of the axon trees, $\mathfrak{a}_t$ depend continuously on $t$.
Since the restrictions of the halo of the (pre)neurons to $\Gamma$
depend continuously on the parameter, the halo of the axon
$\mathbf{T}^{ax}_t$ may be chosen to depend continuously on $t$. We
define it in the following way. Let $m_{T_t}(\tau), \; \tau \in
[0,1],$ parametrize the punctured pellicle of $T^{ax}_t$. The
parametrization is chosen symmetric with respect to the sides of the
edge $T^{ax}_t$, i.e. $m_{T_t}(\tau) = m_{T_t}(1- \tau), \; \tau \in
[0,1]$. For the halo on the first side,
$\overset{\circ}{m}_{T_t}(\tau), \; \tau \in [0,1/2]$, we choose a
$G_0$-homotopy of the disc $\overset{\circ}{m}_t(0)$ to a disc
embedded into $G$ which is the value of the halo over the tip of the
axon. The halo on the second side is chosen symmetrically.

We obtain a piecewise continuous family of neurons with halo, which
we denote by $n_t = (\nu _t, m_t, \Psi _t, \overset{\circ}{m}_t),\;
t \in [0,1]$. The neurons have a continuously changing axon attached
whose halo at the tip is a small analytic disc embedded into $G$.
For $t$ close to $0$ the neuron coincides with the original analytic
disc which is embedded into $G$. For $t$ close to $1$ the main body
of the neuron coincides with the original disc.

In the next section we obtain from this family a continuous family of neurons with halo
with a continuously changing axon attached.

\section{A continuous family of neurons. "Peeling"}

This section is the key of the proof of Lemma 13.

Let $t_0$ be the first discontinuity point of the constructed family
$n_t$ of neurons with halo. Denote by $\;n_{t_0}^\pm =
(\nu_{t_0}^\pm, m_{t_0}^\pm,\Psi_{t_0}^\pm,\overset{\;\circ
\;\;\pm}{m_{t_0}})$ the respective limits from the left and from the
right. Note that the main bodies $\Psi_{t_0}^\pm : \overline \D
\rightarrow X^2 $ of the neurons $n_{t_0}^\pm$ coincide. Moreover,
the values of $\overset{\;\circ \;\;\pm}{m_{t_0}}$ coincide on
$\Gamma$.

There may be no homotopy joining the neurons $n_{t_0} ^-$ and
$n_{t_0} ^+$. The following lemma shows that there is such a
homotopy after attaching a special dendrite to $n_{t_0} ^+$.

\begin{lemma}\label{Lemma 18} There is a dendrite $\mathfrak{T}_{t_0}$
with punctured halo and a neuron with halo $n_{t_0}^0 = n_{t_0}^+ \,
\cup \mathfrak{T}_{t_0}$ obtained in the following way. The tree of
$\mathfrak{T}_{t_0}$ is attached to the generalized disc $\nu^+
_{t_0}$ of $n^+_{t_0}$ at a point $\zeta ^* \in \Gamma$. The
pellicle (respectively, the halo) of the neuron $n^+_{t_0}$
punctured at $\zeta ^*$ and the punctured pellicle (respectively,
the punctured halo) of $\mathfrak{T}_{t_0}$ match and define the
pellicle (respectively, the halo) of the neuron $n_{t_0}^+ \, \cup
\mathfrak{T}_{t_0}$. Moreover, there is a homotopy of neurons with
halo joining the neuron $n_{t_0}^- $
with the neuron $n_{t_0}^0 = n_{t_0}^+ \, \cup \mathfrak{T}_{t_0}$ .

\end{lemma}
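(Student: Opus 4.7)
The plan is to construct the required homotopy $\{n_s\}_{s\in[0,1]}$ of neurons with halo so as to implement, in a literal way, the informal peeling procedure summarised at the end of Section 5. Throughout the homotopy the common main body $\Psi_{t_0}:\overline{\D}\to X^2$ of the two limit neurons $n^\pm_{t_0}$ stays fixed, while the halo is gradually replaced, starting from $\overset{\circ}{m}{}^-$ and ending at $\overset{\circ}{m}{}^+$, beginning at the base point $\zeta^*\in\Gamma$ where the two halos already agree. Parametrize $\partial\D$ counterclockwise by $\sigma\in[0,1]$ starting and ending at $\zeta^*$, and let $\zeta_s\in\partial\D$ be the point at parameter $s$, so that $\zeta_s$ traverses $\partial\D$ once as $s$ runs from $0$ to $1$, with $\zeta_0=\zeta_1=\zeta^*$. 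At time $s$ the arc $\sigma\in[0,s]$ is declared already peeled and is assigned the halo $\overset{\circ}{m}{}^+$ together with whichever trees of $n^+_{t_0}$ are rooted in it; the arc $\sigma\in[s,1]$ is declared not yet peeled and is assigned the halo $\overset{\circ}{m}{}^-$ together with whichever trees of $n^-_{t_0}$ are rooted in it.

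At the moving seam $\zeta_s$ insert a transition dendrite $\mathfrak{T}_s$ given by a single edge $e_s$ rooted at $\zeta_s$, with tip $q_s$ chosen continuously in the complement of $\overline{\D}$. Parametrize $e_s$ by $\lambda\in[0,1]$ with $\lambda=0$ at the root and $\lambda=1$ at the tip, and extend the main body to $e_s$ by $\Phi_s(\lambda)\defined\Psi_{t_0}(s(1-\lambda))$, so that $\Phi_s$ retraces the peeled sub-arc $\Psi_{t_0}([\zeta^*,\zeta_s])$ inside $X^2$. Assign the halo value $\overset{\circ}{m}{}^+(s(1-\lambda))$ to the left side of $e_s$ and $\overset{\circ}{m}{}^-(s(1-\lambda))$ to the right side. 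At the root $\lambda=0$ these values match the halo on the peeled, respectively un-peeled, side of $\partial\D$ at $\zeta_s$, and at the tip $\lambda=1$ they both equal $\overset{\circ}{m}{}^\pm(\zeta^*)$ and hence coincide because $\zeta^*\in\Gamma$. The halo on the pellicle of $n_s$ is therefore continuous, and its push-forward under $\hat{\mathcal{P}}_0$ extends the continuous lift $\hat\Psi_{t_0}$ from $\partial\D$ to $e_s$ as the sub-arc $\hat\Psi_{t_0}([\zeta^*,\zeta_s])$ traversed forward and then backward, so $n_s$ is a bona fide neuron with halo. At $s=0$ the edge $e_s$ collapses and $n_0=n^-_{t_0}$. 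At $s=1$ the seam has returned to $\zeta^*$, the whole of $\partial\D$ carries the halo $\overset{\circ}{m}{}^+$ together with the trees of $n^+_{t_0}$, and the remaining dendrite $\mathfrak{T}_1$ attached at $\zeta^*$ plays the role of the promised $\mathfrak{T}_{t_0}$, so that $n_1=n^+_{t_0}\cup\mathfrak{T}_{t_0}$.

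The main obstacle will be to verify that the family $s\mapsto n_s$ is genuinely continuous at the finitely many parameter values $s=\sigma_j$ at which $\zeta_s$ crosses the attachment point of a tree of $n^-_{t_0}$, which must then be released from $\partial\D$, or of a tree of $n^+_{t_0}$, which must then be acquired by $\partial\D$. I would deal with both events by keeping the affected tree grafted onto the right side of $e_s$, in the case of $n^-_{t_0}$, or onto the left side of $e_s$, in the case of $n^+_{t_0}$, during a short parameter interval around $\sigma_j$; throughout this interval the tree sits inside $\mathfrak{T}_s$ rather than on $\partial\D$, and its halo automatically agrees with the halo assigned to the corresponding side of $e_s$ by the definition of $\mathfrak{T}_s$. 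Compatibility of the continuous lift to $\hat G$ across such a transfer is guaranteed by Lemma \ref{Lemma 13a} applied to the combined pellicle, while the continuity of the underlying generalized disc and of $\Phi_s$ and $\overset{\circ}{m}_s$ follows from the continuity of $\Psi_{t_0}$ and of $\overset{\circ}{m}{}^\pm$ together with Lemma \ref{Lemma 14} to fatten, as the case may be, any piece of dendrite that has to migrate into or out of $\partial\D$. Concatenating finitely many such local modifications at the finitely many attachment points produces the desired continuous family, completing the construction and with it the proof of Lemma \ref{Lemma 18}.
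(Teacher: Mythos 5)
Your overall peeling strategy matches the paper's: the main body stays fixed, the seam $\zeta_s$ sweeps once around $\partial\D$ starting from a point $\zeta^*\in\Gamma$ where the two halos agree, and a transition edge $e_s$ is inserted at the seam carrying the $n^+$-halo on its inner side and the $n^-$-halo on its outer side. That much is the paper's Step 1. However, there is a genuine gap in your handling of the trees of $n^+_{t_0}$.

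The problem is that an $n^+$-tree rooted at a point $\zeta_{\sigma_j}$ of $\partial\D$ is \emph{not present at all} in the neuron $n_s$ for $s<\sigma_j$ (the peeled arc $[\zeta^*,\zeta_s]$ does not yet reach its root), and must be fully present — with the complete pellicle-worth of halo values over it — for $s>\sigma_j$, on the peeled arc \emph{and} as a mirror copy on the inner side of $e_s$ (your formula $\overset{\circ}{m}{}^+(s(1-\lambda))$ on the left side of $e_s$ cannot be continuous at $\lambda$ corresponding to $\zeta_{\sigma_j}$ unless a mirror tree is carried there). This is precisely a dendrite that has to \emph{grow continuously out of a one-point dendrite}. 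Saying ``the tree sits inside $\mathfrak{T}_s$ rather than on $\partial\D$ during a short parameter interval'' does not construct such a growth. Neither of the lemmas you cite can supply it: Lemma~\ref{Lemma 13a} only produces an excrescence of a fixed curve, and Lemma~\ref{Lemma 14} only converts a single preneuron with lifted pellicle into a single neuron with halo — neither produces a continuous one-parameter family of dendrites interpolating between a point and a given tree (together with its reflection) while keeping the halo continuous and the leaves mapped into $G$. The paper's Steps~2 and~3 and, in particular, Lemma~\ref{Lemma 19} (``growing of dendrite twins'', proved by induction on the number of edges of the tree, with the letter-$\textsf{Y}$ splitting and the symmetry ray $\mathcal{B}_\zeta$) are exactly the missing mechanism, and they are the technical heart of the proof, not a routine detail. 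Until you supply an equivalent of that construction, the family $s\mapsto n_s$ fails to be continuous at each $\sigma_j$ where an $n^+$-tree is rooted, and the argument does not close.
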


\begin{proof}
In the proof we will skip everywhere the index $t_0$.

To ease reading we will first work out the proof in simple but
typical situations before giving the formal proof in the general
situation.
\medskip

\noindent {\it Step 1 of the proof. Peeling for $n^+$-regular points
$\zeta$.} Let $\zeta _0 = \exp (i\theta _0), \;\theta _0
>0,$ be a point in $\Gamma$ (counterclockwise from $1$). We let a
one-edge dendrite grow out of $n^-$ at the point $\zeta _0$ and let
its root run counterclockwise along the circle. More precisely, let
$\zeta = \exp (i t_{\zeta}), \; t_{\zeta}
>\theta _0,$ be a point on the unit circle situated counterclockwise from $\zeta _0$.
Let $\gamma_{\zeta} $ be the arc between $\zeta _0$ and $\zeta$,
$\gamma _{\zeta} = \{\gamma _{\zeta}(t) = \exp (it): \theta _0 \leq
t \leq t_{\zeta}\}$. Assume that all points of $\gamma_{\zeta} $ are
regular for the neuron $n^+$. Let $e_{\zeta}$ be a closed straight
line segment attached to $\bd \D$ at the endpoint $\zeta$ of $\gamma
_{\zeta}$ which is transversal to $\bd \D $ and meets $\overline \D$
exactly at $\zeta$.

Consider the generalized disc $\nu _{\zeta} \defined \nu ^- \cup e_
{\zeta}$. Give it the structure of a neuron $n_{\zeta} = (\nu _
{\zeta}, \Psi _{\zeta}, m_{\zeta}, \overset{\circ}{m}_{\zeta})$ with
halo in the following way.

Remove the point $\zeta _0$ from the unit circle and close up the
arc by adding two points over $\zeta _0$. We refer to this set as
the punctured circle (punctured at $\zeta _0$). In the same way we
define the pellicle of $\nu^-$ punctured at $\zeta _0$.  Denote by
$O_{\zeta}$ the union of the closed arc $\overline{\bd \D \setminus
\gamma_{\zeta}}$ of the circle with the "outer" side of $e_{\zeta}$
(i.e. the "right" side of the edge $e_{\zeta}$ with orientation
towards the root, in other words, the second side when surrounding
the edge counterclockwise starting from the root). This side is
pasted to $\overline{\bd \D \setminus \gamma_{\zeta}}$ at the point
$\zeta$.

Consider the excrescence $\mathcal{E}^-$ of the punctured circle
which is equal to the pellicle of $n^-$ punctured at $\zeta _0$. Let
$A_{\zeta}$ be a homeomorphism of $\mathcal{E}^-$ onto an
excrescence $O^*_{\zeta}$ of $O_{\zeta}$. Suppose $A_{\zeta}$ is the
identity on $\bd \D \setminus \gamma_{\zeta}$, maps $\gamma
_{\zeta}$ onto $e_{\zeta}$ and fixes $\zeta$. Moreover, assume that
$A_{\zeta}$ is affine on each segment of $\mathcal{E}^-$ that is
contained in an edge of an $n^-$-tree.

Assign a halo to $O^*_{\zeta}$ in the following way. Let for some
interval $I$ the mapping $m_{\mathcal{E}^-}(t), \; t \in I,$ be a
parametrization of $\mathcal{E}^-$. Then $m _ {O^*_{\zeta}}(t)
\defined A_{\zeta} \circ m_{\mathcal{E}^-}(t), \; t \in I,$
parametrizes $O^*_{\zeta}$ and we put
$\overset{\circ}{m}_{O^*_{\zeta}}(t) \defined
\overset{\circ}{m}_{\mathcal{E}^-}(t), \; t \in I$.

Assign to the arc $t \rightarrow \gamma _{\zeta}(t), \; t \in
[\theta _0,t_{\zeta}],$ the $n^+$-halo: choose the parametrization
${m}^+(t)= {\gamma _{\zeta}}(t), t \in [\theta _0,t_{\zeta}]$, for
the arc of the pellicle of $n^+$ and put $\overset{\circ}{m}_{\gamma
_{\zeta}}(t) = \overset{\circ}{m}^+(t),\; t \in [\theta
_0,t_{\zeta}]$.

Finally, parametrize the "inner" (i.e. "left")  side $e^l_{\zeta}$
of $e_{\zeta}$ by $e^l_{\zeta}(t)= A_{\zeta} \circ m^+(t), t \in
[\theta _0,t_{\zeta}],$ and define the halo on $e^l_{\zeta}$ by
$\overset{\circ}{m}_{e^l_{\zeta}}(t) = \overset{\circ}{m}^+(t), \; t
\in [\theta _0,t_{\zeta}]$. Note that the halo on $e^l_{\zeta}$ is
"$A_{\zeta}$-symmetric" (i.e symmetric with respect to the
homeomorphism $A_{\zeta}$) to the halo on $\gamma _{\zeta}$.

The three arcs $O^*_{\zeta}$, $\gamma_{\zeta}$ and $e^l_{\zeta}$
cover the pellicle of the generalized disc $\nu _{\zeta}$. The
values of the halo match at the common endpoints of the arcs.
Indeed, they match at the tip of
 $e_{\zeta}$ because this point is the image of $\zeta _0 \in \Gamma$ under the map $A_{\zeta}$
 and for points in $\Gamma$ the $n^+$-halo takes the same value as the $n^-$-halo.
They also match at the point $\zeta$ because $A_{\zeta}$ fixes this
point.

We obtained a neuron $n_{\zeta}$ with halo. It has a distinguished
attached dendrite $\mathbf{e}_{\zeta}$.

The construction proceeds as long as no $n^+$- dendrite is attached
to the interior of the arc $\gamma _{\zeta}$. It is arranged so that
it provides a family of neurons $n_{\zeta}$ that depend continuously
on the parameter ${\zeta}$ so that the values of the halo of each of
it is contained in the union of the set of values of the halo of
$n^+_{t_0}$ and $n^-_{t_0}$ . Notice that the parametrization of the
pellicle of $n^+$ can be chosen so that the arc
$t \rightarrow m^+(t),\; t \in [\theta _0,t_{\zeta}],$  of the
pellicle of the generalized disc $\nu^+$ is identical to the arc $t
\rightarrow \gamma _{\zeta}(t)$ of the circle.

\medskip

\noindent {\it Step 2. Reaching edge-like dendrites of $n^+$.}
Suppose the construction of step 1 has been made up to a point
$\zeta \in \partial \D$. We obtained a continuous family of neurons
joining $n^-$ with a neuron $n_{\zeta}= (\nu _ {\zeta}, \Psi
_{\zeta}, m_{\zeta},\overset{\circ}{m}_{\zeta})$. Recall that no
$n^+$-neuron is attached to the interior $Int \,(\gamma _{\zeta})$
so that $\gamma _{\zeta} = m^+([\theta _0,t_{\zeta}]) \subset \bd
\D$ for a parameter $t_{\zeta}$.

Suppose that  $\zeta$ is the root of a tree $T_{\zeta}$ of the
neuron $n^+$. Hence $t_{\zeta}$ parametrizes the initial point of
the pellicle of the tree $T_{\zeta}$. Let $t'_{\zeta}$ parametrize
the terminating point of the pellicle of $T_{\zeta}$.

Denote by $\mathcal{B}_{\zeta}$ the (closed) ray that bisects the
angle between $\gamma _{\zeta}$  and the edge $e_{\zeta}$ obtained
at step 1 (more precisely, the angle between the tangent ray to
$\gamma _{\zeta}$ at $\zeta$ and $e_{\zeta}$; we mean the angle
which is covered moving in counterclockwise direction around the
point $\zeta$.) Choose a closed convex cone $U_{\zeta}$ with vertex
$\zeta$ and non-empty interior which is symmetric with respect to
reflection in the symmetry ray $\mathcal{B}_{\zeta}$, (hence, it
contains $\mathcal{B}_{\zeta}$) and is contained in the sector
between $\gamma _{\zeta}$ and $e_{\zeta}$.

Suppose the tree $T_{\zeta}$ of the $n^+$-dendrite
$\mathbf{T}_{\zeta}$ attached at $\zeta$ consists of a single edge.

Our goal is to construct a continuous family of neurons which differ
only by a dendrite whose tree is attached at $\zeta$ and situated
inside the cone $U_{\zeta}$. The family is constructed so that it
joins the neuron $n_{\zeta}$ with a neuron $n'_{\zeta}$ so that
$n'_{\zeta}$ has the following property. The pellicle of its
generalized disc $\nu'_{\zeta}$ contains an arc that coincides with
$t \rightarrow m^+(t), \; t \in [\theta _0,t'_{\zeta}]$ (i.e. the
arc is constituted by $\gamma _{\zeta}$ together with the punctured
pellicle of $T_{\zeta}$).

For defining the family of neurons it is enough to describe the
family of dendrites.

Realize  $T_{\zeta}$ as a straight line segment in the plane in the
direction of $\mathcal{B}_{\zeta}$ meeting the generalized disc
$\nu_{\zeta}$ exactly at $\zeta$. Reparametrize the punctured
pellicle $m_{T_{\zeta}}$ of $T_{\zeta}$ by the interval $[0,1]$ and
symmetrically with respect to its sides. More precisely, denote by
$m_{T_{\zeta}}:[0,1] \rightarrow T_{\zeta}$ the (reparametrized)
punctured pellicle of $T_{\zeta}$. We require that this mapping has
the following symmetry property: for each $t \in [0,1]$ the points
$m_{T_{\zeta}}(t)$ and $m_{T_{\zeta}}(1-t)$ are at different sides
of the pellicle over the same point. The halo
$\overset{\circ}{m}_{T_{\zeta}}$ is reparametrized accordingly by
the interval $[0,1]$.

Construct a continuous family of dendrites $\mathbf{T}_{\zeta}^s, \;
s \in [0,1],$ with punctured halo, the tree ${T}_{\zeta}^s$ of which
has root $\zeta$ and such that
\begin{itemize}
\item for each $s$ the tree ${T}_{\zeta}^s$ is contained in
$U_{\zeta}$ and meets the boundary of $U_{\zeta}$ exactly at
$\zeta$;
\item  for each $s$ the values of the punctured halo of
$\mathbf{T}_{\zeta}^s$ at the initial and terminating point coincide
and are equal to $\overset{\circ}{m}_{T_\zeta}(0)$; the dendrites
are mirror symmetric with respect to reflection in the symmetry ray
$\mathcal{B}_{\zeta}$;
\item  $\mathbf{T}_{\zeta}^0 $ is a one-point dendrite;
\item  $\mathbf{T}_{\zeta}^1$ consists of the union of two dendrites
attached at $\zeta$ ("dendrite twins"). The first of the two
dendrites (i.e. its underlying tree, its punctured pellicle and
punctured halo) is a homeomorphic copy of $\mathbf{T}_{\zeta}$ and
is (by a slight abuse) denoted again by $\mathbf{T}_{\zeta}$. Its
tree is placed in the closed part $U_{\zeta}^-$ of the cone
$U_{\zeta}$ which is clockwise from $\mathcal{B}_{\zeta}$, and meets
the boundary of  $U_{\zeta}^-$ exactly at $\zeta$;\\
\noindent The second dendrite is mirror symmetric to the first one
with respect to reflection in the symmetry ray $\mathcal{B}_{\zeta}$
and is denoted by $\mathbf{T}^*_{\zeta}$.\\
\noindent The value of the punctured halo of the dendrite
$\mathbf{T}^1_{\zeta}$ at the point that lies over $\zeta$ between
the dendrite twins coincides with the value at the terminating point
of the pellicle of $\mathbf{T}_{\zeta}$.
\end{itemize}

We call this procedure "growing of dendrite twins" (see below Lemma
21 for the general case).
\medskip

The construction is the following. For $s=0$ we obtain a one-point dendrite
$\mathbf{T}^0_{\zeta}$. The procedure of attaching this dendrite $\mathbf{T}^0_{\zeta}$
does not change $n_{\zeta}$.

For $s \in (0,1/2]$ the tree $T_{\zeta}^s$ of the dendrite
$\mathbf{T}_{\zeta}^s$ is an edge and consists of the points
$m_{T_{\zeta}} ( [0,s]) $.

Parametrize the pellicle of the tree $T_{\zeta}^s$ by the interval
$[0,2s]$ and symmetrically with respect to the sides of the tree:
take $m_{T^s_{\zeta}}|[0,s]
\defined m_{T_{\zeta}}|[0,s]$ (parametrization of the first side of
the tree), and symmetrically, $m_{T^s_{\zeta}}(\tau) \defined
m_{T_{\zeta}}(2s -\tau)$ for $\tau \in [s,2s]$ (parametrization of
the second side of the tree).

Respectively, the halo of the dendrite $\mathbf{T}_{\zeta}^s$ is
defined by the relations $\overset{\circ}m _{T^s_{\zeta}} |[0,s]
\defined \overset{\circ}m _{T_{\zeta}} |[0,s]$ on the first side, and
symmetrically, $\overset{\circ}m _{T^s_{\zeta}}(\tau) \defined
\overset{\circ}{m} _{T^s_{\zeta}}(s-\tau ), \;\tau \in [s,2s],$ on
the second side of the dendrite.

For $s \in (1/2,1]$  the tree  $T_{\zeta}^s$ of the dendrite becomes
a letter "$ \textsf{Y}$" which is symmetric with respect to the
symmetry ray.

Describe the tree $T_{\zeta}^s$. Denote by $a_{\zeta}^s$ the segment
$m_{T^s_{\zeta}}([0,1-s])$ of $T_{\zeta}$ which is adjacent to
$\zeta$ (note that the number $1-s$ is less than $\frac{1}{2}$).
Denote the remaining segment  $b_{\zeta}^s
\defined \overline {T_{\zeta} \setminus a_{\zeta}^s}$. The segment $a_{\zeta}^s
\subset \mathcal{B}_{\zeta}$ is the "trunk" of the letter
$\textsf{Y}$. The "first branch of the letter $\textsf{Y}$" is the
image $\mathcal{R}^s_{\zeta} ( b^s_{\zeta})$ of $b^s_{\zeta}$ under
a rotation $\mathcal{R}^s_{\zeta}$ around the common endpoint
$m_{T^s_{\zeta}}(1-s)$ of $a^s_{\zeta}$ and $b^s_{\zeta}$. The
rotation is chosen so that the rotated segment
$\mathcal{R}^s_{\zeta} ( b^s_{\zeta})$ is placed in $U_{\zeta}^-$
and meets the boundary of  $U_{\zeta}^-$  exactly at
$m_{T^s_{\zeta}}(1-s)$. The rotations $\mathcal{R}^s_{\zeta}$ are
chosen continuously depending on $s$.

\begin{figure}[h]
\centering
\includegraphics[scale=0.9]{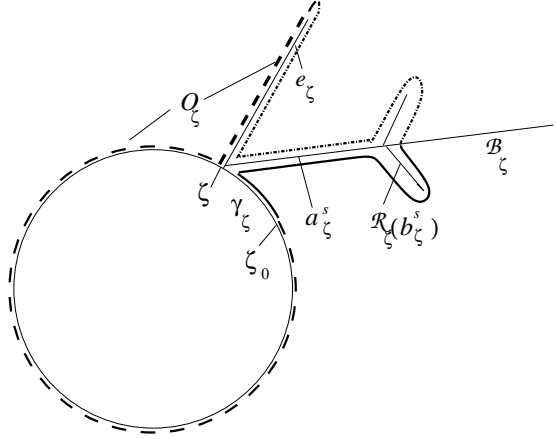}
\caption{"Peeling" in case of a single $n^+$-edge at $\zeta$}
\end{figure}

The second branch of the letter $\textsf{Y}$ is chosen symmetric to
the first one with respect to mirror reflection in the symmetry ray
$\mathcal{B}_{\zeta}$.

Describe the punctured pellicle $m^s_{\zeta }$ of the tree
$T_{\zeta}^s$ and the halo of the dendrite $\mathbf{T}_{\zeta}^s$.
The part of the pellicle of $T_{\zeta}^s$  corresponding to the
first side of $a^s_{\zeta}$ coincides with the corresponding part of
the pellicle of  $T_{\zeta}$: $m^s_{\zeta}(\tau) = m_{\zeta}(\tau)$
for $\tau \in [0,1-s]$. Respectively, for the halo the relation $
\overset{\;\; \circ \;\; s}{m}_{\zeta}(\tau) \defined \overset{
\circ} {m}_{\zeta}(\tau)$ for $\tau \in [0,1-s]$ holds.

For $\tau $ in the interval $[1-s,s]$ the relation is $m^s_{\zeta}
(\tau) = \mathcal{R}^s_{\zeta} \circ m_{\zeta} (\tau)$. This part of
the pellicle $m^s_{\zeta}$ surrounds $\mathcal{R}^s_{\zeta} (
b^s_{\zeta})$.
The halo of  $\mathbf{T}_{\zeta}^s$ for those parameters $\tau$ is
defined by the halo of  $\mathbf{T}_{\zeta}$: we put $\overset {\;
\; \circ \;\; s}{m_{\zeta}} (\tau) = \overset {\circ}{m}_{\zeta}
(\tau)$ for $\tau \in [1-s,s]$.

The remaining part of the punctured pellicle and punctured halo of the
dendrite $\mathbf{T}^s_{\zeta}$ is mirror symmetric to the just described part.

For $s=1$ we arrive at a mirror symmetric pair of dendrites with the properties described above.
The construction for this case is completed.

\medskip

\noindent {\it Step 3. The general case.} Let $\zeta _0$ be as above
a point in $\Gamma$ situated counterclockwise from the root $1$ of
the axon. Suppose $\zeta \in \bd \D \setminus \Gamma$ is reached by
moving counterclockwise from $\zeta _0$ and $\gamma _{\zeta }$ is
the closed arc of the circle between $\zeta _0$ and $\zeta$
(counterclockwise traveling). Let $m^+$ parametrize the pellicle of
$n^+$ punctured at $\zeta _0$, $m^+(\theta _0) = \zeta _0$.

If $\zeta$ is a regular point for $n^+$ then there is a unique
parameter $t_{\zeta}$ in the pellicle of $n^+$ for which the
equality $m^+(t_{\zeta}) = \zeta$ holds. If $\zeta $ is not regular
for $n^+$ then there is a finite collection of increasing parameters
$t^1_{\zeta}, ...,t^l_{\zeta}$ for which $m^+(t^j_{\zeta}) = \zeta$.
Here $t^1_{\zeta}$ parametrizes the initial point of the $n^+$-tree
attached at $\zeta$ and $t^l_{\zeta}$ parametrizes its terminating
point. The points $ t^j_{\zeta}$ and $t^{j+1}_{\zeta}$ parametrize
the initial, respectively the terminating, points of the simple
trees constituting the tree at $\zeta$.

The plan is the following. Let $\zeta \in \partial \D \setminus
\Gamma$ be any point counterclockwise from $\zeta _0$ and let
$t_{\zeta}$ denote one of the parameters for which $ m^+(t_{\zeta})
=\zeta$. Assume a neuron $n_{t_{\zeta}}$ is constructed such that
the pellicle of its tree contains the arc $\tau \rightarrow
m^+(\tau), \tau \in [\theta _0,t_{\zeta}]$. We will construct a
neuron such that an arc of its pellicle coincides with $\tau
\rightarrow m^+(\tau),\; \tau \in [\theta _0,t]$, for some
parameters $t>t_{\zeta}$.

Here is the precise description of the induction hypothesis.

Suppose a neuron $n _{t_{\zeta}}$ is constructed with the following
properties. Its generalized tree $\nu _{\zeta}$ has an edge
$e_{\zeta}$  attached at $\zeta$. Let as in step 2
$\mathcal{B}_{\zeta}$ be the (closed) ray that bisects the angle
between $\gamma _{\zeta}$ and the edge $e_{\zeta}$. The main
property of $n _{t_{\zeta}}$ is the following. The pellicle of $\nu
_{\zeta}$ (considered as a curve parametrized by the unit circle
$\partial \D$) has a partition into three parts each reparametrized
by an interval.
\begin{itemize}
\item [(1)] The first part is the excrescence $O^*_{\zeta}$ of
$O_{\zeta}$. Its halo is defined by $\mathcal{E}^-$ as in step 1.
\item [(2)] The second part is the excrescence $ \gamma ^*_{\zeta}(t) \defined m^+(t),\; t \in
[\theta _0,t_{\zeta}],$ of $ \gamma_{\zeta}$. We assume the
excrescence is chosen so that its image $m^+([\theta _0,t_{\zeta}])$
is situated clockwise from $\mathcal{B}_{\zeta}$ and meets
$\mathcal{B}_{\zeta}$ exactly at the points $m^+(t^j_{\zeta}) =
\zeta, \; j=1,...,i$, where $t^i_{\zeta}= t_{\zeta}$ . The halo on
the second part is defined by $\overset{\circ}{m}^+(t), \; t \in
[\theta _0,t_{\zeta}]$.
\item [(3)] To define the third part we consider an extension of the
homeomorphism $A_{\zeta}$ from $\mathcal{E}^-$ to the image
$m^+([\theta _0,t_{\zeta}])$ such that $A_{\zeta}$ is affine on each
straight line segment contained in $m^+([\theta _0,t_{\zeta}])$.
Moreover, the image $A_{\zeta} \circ m^+([\theta _0,t_{\zeta}])$ is
contained in the closed angle between $\mathcal{B}_{\zeta}$ and
$e_{\zeta}$ (i.e. counterclockwise from $\mathcal{B}_{\zeta}$) and
meets $\mathcal{B}_{\zeta}$ exactly at the points $m^+(t^j_{\zeta})
= \zeta,
\; j=1,...,i,$  .\\
The third part is the excrescence $(e^l_{\zeta})^*(t) = A_{\zeta}
\circ m^+ (t) , \; t \in [\theta _0,t_{\zeta}]$. The halo is defined
by $\overset{\circ}{m}_{e(^l_{\zeta})^*}(t) =
\overset{\circ}{m}^+(t),\; t \in [\theta _0,t_{\zeta}]$.

\end{itemize}

\begin{figure}[h]
\centering
\includegraphics[scale=0.50]{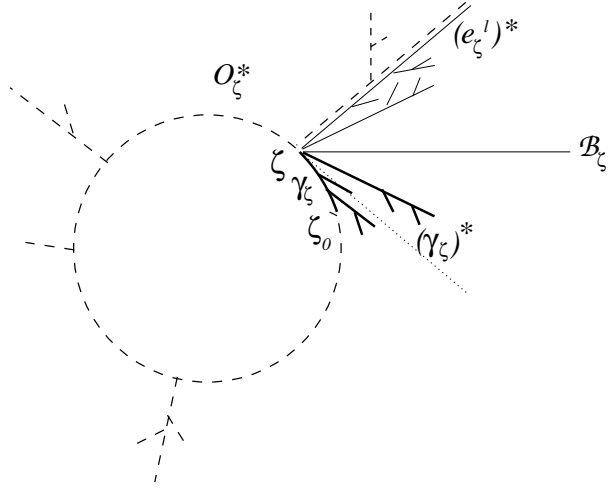}
\caption{"Peeling": The generalized disc $\nu _{t_{\zeta}}$}
\end{figure}

Two possibilities may arise.
\begin{itemize}
\item [(a)] Points in the pellicle of $n^+$ parametrized by $t > t_{\zeta}$ and close to $t_{\zeta}$
are regular points contained in $\bd \D$.
\item [(b)] $t_{\zeta}$ is the initial point of one of the simple trees that constitute the $n^+$-tree
attached at $\zeta$. We denote this tree for short by $T_{\zeta}$
and the respective dendrite by $\mathbf{T}_{\zeta}$. (Notice that
the structure of the whole $n^+$-dendrite that is attached at
$\zeta$ does not play a role in the proof.) Let $t'_{\zeta}$
parametrize the terminating point of the pellicle of $T_{\zeta}$. So
$m^+(t'_{\zeta})= \zeta$ and the arc $t \rightarrow m^+(t), \; t \in
[t_{\zeta},t'_{\zeta}]$, of the pellicle of $\nu ^+$ is the
punctured pellicle of the tree $T_{\zeta}$.
\end{itemize}

\medskip

Here are the constructions in cases (a) and (b). In the first case
(a) we proceed like in step 1 of the proof. We change the root of
the main edge $e_{\zeta}$ in counterclockwise direction along the
circle and let the edge grow.
More precisely, let $\zeta ' \in \partial \D$ be counterclockwise of
$\zeta$ and let the arc between $\zeta$ and $\zeta'$ consist of
regular points. At $\zeta '$ we attach an edge $e_{\zeta'}$ and
equip it with the following structure. For a segment of $e_{\zeta'}$
adjacent to the leaf we take an excrescence on each of the sides of
the edge (and the respective halo on it) that is homeomorphic to the
respective one for $e_{\zeta}$. For the remaining segment of
$e_{\zeta'}$ that is adjacent to the root $\zeta '$ we proceed as in
step 1.

In the second case (b) we will construct a continuous family of
neurons that connects $n_{\zeta}$ with a neuron  $n'_{\zeta}$ so
that the final neuron $n'_{\zeta}$ has the following properties. As
for the original neuron the pellicle of $n'_{\zeta}$ has a
decomposition into three parts satisfying properties (1), (2) and
(3) with $t_{\zeta}$ replaced by $t_{\zeta'}$. Thus, the pellicle of
$n'_{\zeta}$ contains the arc $t \rightarrow m^+(t),\; t \in [\theta
_0,t'_{\zeta}]$. (Recall that $t \rightarrow m^+(t),\; t \in
[t_{\zeta},t'_{\zeta}]$, is the punctured pellicle of $T_{\zeta}$).

To construct the family of neurons it is enough to construct the
respective family of dendrites attached to $n_{\zeta}$ at the point
$\zeta$. The following lemma provides this construction.


\begin{lemma}\label{Lemma 19}{\it (On growing of dendrite twins)}
Let $\gamma_{\zeta}$, $e_{\zeta}$ and $\mathcal{B_{\zeta}}$ be as
above.
Let $U_{\zeta}$ be a closed convex cone with vertex $\zeta$ which is
symmetric with respect to reflection in $\mathcal{B_{\zeta}}$ and
contained in the sector between $\gamma_{\zeta}$ and $e_{\zeta}$.
Denote by $U_{\zeta}^-$ the closed part of $U_{\zeta}$ which
is situated clockwise from $\mathcal{B_{\zeta}}$. Let
$\mathbf{T}=(T,m_T, \Phi_T, \overset{\circ}m_T)$ be a dendrite with
halo. Suppose the pellicle of the underlying tree $T$ is
parametrized by $[t',t'']$.

Consider a point $\xi \in \mathcal{B_{\zeta}}$ and a closed convex
cone $U_{\xi} \subset U_{\zeta} $ with vertex $\xi$ which is
symmetric with respect to reflection in $\mathcal{B_{\zeta}}$.

Then there exists a continuous family of dendrites
$\mathbf{T}_{\xi}^s,\; s \in [0,1],$ with root $\xi$ with the
following properties:
\begin{itemize}
\item [(1)] For all $s \in [0,1]$ the tree of $\mathbf{T}_{\xi}^s$ is contained in the
cone $U_{\xi}$ and meets the boundary of the cone exactly at $\xi$.
\item [(2)] For all $s$ the values of the halo
$\overset{\circ}{m}^s_{{T}_{\xi}}$ at the initial point and at the
terminating point
coincide and equal $\overset{\circ}{m}^+_{T}(t')$.
The dendrites are mirror
symmetric for reflection in the ray $\mathcal{B}_{\zeta}$.
\item [(3)] The dendrite $\mathbf{T}_{\xi}^0$ is a one-point dendrite.
\item [(4)] The dendrite $\mathbf{T}_{\xi}^1$ is a dendrite twin
attached at $\xi$. 
The tree of the first labeled twin dendrite is contained in
$U_{\xi}^- \defined U_{\zeta}^- \cap U_{\xi}$ and meets the boundary
of $U_{\xi}^- $ exactly at $\xi$.
The first labeled dendrite (i.e. its underlying tree, its punctured
pellicle and punctured halo) is a homeomorphic copy of $\mathbf{T}$.
The second dendrite is mirror symmetric to the first one with
respect to reflection in the symmetry ray. The value of the halo of
the dendrite twin at the point between the twins equals
$\overset{\circ}{m}_{T}(t'')$.

\end{itemize}
\end{lemma}

\begin{proof}
If the tree of the dendrite $\mathbf{T}$ consists of a single edge
the proof was given in step 2. Prove the lemma by induction on the
number of edges of the tree $\mathbf{T}$.

Suppose first that the (planar) tree $T$ is not simple, i.e. it has
more than one (non-empty) edges {\it adjacent} to the root. Then the
tree is the union of two (planar) trees $T'$ and $T''$ with the same
root labeled so that $T''$ is counterclockwise of $T'$.  Each of the
trees $T'$ and $T''$ has less edges than $T$. By induction
hypothesis the required family $\mathbf{T'}_{\xi}^s,\; s \in [0,1],$
of dendrites rooted at $\xi$ exists for the first dendrite
$\mathbf{T'}$. The final dendrite  $\mathbf{T'}_{\xi}^1$ is the
union of mirror symmetric twins. The first of the twins is denoted
by $\mathbf{T'}_{\xi}$ (situated clockwise from
$\mathcal{B}_{\zeta}$) and the second twin is denoted by
$(\mathbf{T'}_{\xi})^*$ (situated counterclockwise from
$\mathcal{B}_{\zeta}$)).

Consider a smaller closed convex cone $\overset{\circ}U _{\xi}
\subset U_{\xi}$ with vertex $\xi$ which is symmetric with respect
to $\mathcal{B}_{\zeta}$ and meets the trees $T'_{\xi}$ and
$(T'_{\xi})^*$ exactly at the point $\xi$.

An application of the induction hypothesis to the point $\xi$, the
cone $\overset{\circ}U _{\xi} $ and the second dendrite
$\mathbf{T}''$ finishes the proof in this case.

\medskip

Consider the remaining case when the tree $T$ is simple, i.e. it has
a single edge $E$ adjacent to its root. Realize $E$ as a segment
$E_{\xi}$ with initial point $\xi$ on the symmetry ray
$\mathcal{B}_{\zeta}$ (traveled in positive direction of
$\mathcal{B}_{\zeta}$). Associate to the tree $E_{\xi}$  the
following dendrite $\mathbf{E}^-_{\xi}$ with halo. The tree of
$\mathbf{E}^-_{\xi}$ is chosen equal to $E_{\xi}$. The halo on the
first side of $E_{\xi}$ is taken to coincide with the halo of
$\mathbf{T}$ along the first side of the edge $E$ of its tree. The
halo on the second side of $E_{\xi}$ is chosen symmetrically. There
is a continuous family of dendrites which join the one-point
dendrite with root $\xi$ with the dendrite $\mathbf{E}^-_{\xi}$.

Denote by $\mathbf{T}^E$ the dendrite obtained by removing
$\mathbf{E}$ from $\mathbf{T}$. In other words, the tree of
$\mathbf{T} ^E$ equals $T^E \defined T \setminus E$.
The halo of the dendrite
$\mathbf{T}^E$ is the restriction of the halo of $\mathbf{T}$.

The tree $T^E$ has an edge less than $T$. The induction hypothesis
applied to $T^E$, the endpoint $\eta$ of the tree $E_{\xi}$ and a
closed convex cone $U_{\eta} \subset {U}_{\xi}$ symmetric with
respect to $\mathcal{B}_{\zeta}$ gives a continuous family of
dendrites that join the one-point dendrite at $\eta$ with a dendrite
twin $\mathbf{T}^E_{\eta} \bigcup (\mathbf{T}^E_{\eta})^*$ rooted at
$\eta$. Here ${T}^E_{\eta} \subset U_{\eta}$ is situated clockwise
from $\mathcal{B}_{\zeta}$ and $({T}^E_{\eta})^* \subset U_{\eta}$
is counterclockwise from $\mathcal{B}_{\zeta}$. Cut the punctured
pellicle of $E^-_{\xi}$ at the tip $\eta$ and paste the punctured
pellicle of ${T}^E_{\eta} \bigcup ({T}^E_{\eta})^*$.

\begin{figure}[h]
\centering
\includegraphics[scale=0.7]{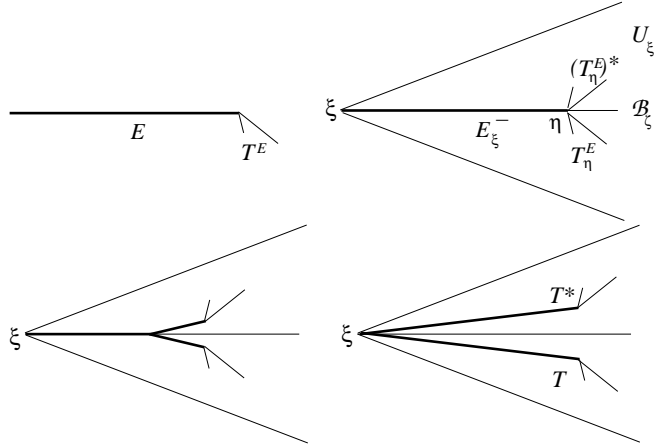}
\caption{Growing of dendrite twins}
\end{figure}

The punctured halo of the twin dendrite $\mathbf{T}^E_{\eta} \bigcup
(\mathbf{T}^E_{\eta})^*$ matches with that of $\mathbf{E}_{\xi}^-$
at the point $\eta$. The result of pasting is a dendrite with halo
which can be joined with the one-point dendrite at $\xi$ by a
continuous family.

The rest of the construction is based, as in step 2, on splitting
the segment $E_{\xi}$ into a letter ${\textsf{Y}}$ but with copies
of $T^E_{\xi}$ (respectively $(T^E_{\xi})^*$) attached at the tip of
the first branch (respectively, of the second branch) of the letter
$\textsf{Y}$.

It remains to define the halo on the $\textsf{Y}$. The pellicle of
the $\textsf{Y}$ punctured at the bottom point has a partition into
three arcs: the part, seen from the right (the union of the first
side of the steam and the first side of the first branch), the part
seen from above (the union of the second side of the first branch
and the first side of the second branch) and the part seen from the
left (the union of the second side of the second branch and the
second side of the steam). The halo on the part seen from the right
(respectively seen from the left) is the halo on the first side
(respectively on the second side) of $\mathbf{E}^-_{\xi}$ after a
change of variables. The halo on the part seen from above is defined
as in step 2.




We defined a continuous family of dendrites with halo. The final
dendrite of the family is the required twin dendrite. The proof of
lemma 21 is finished.

\end{proof}

To finish step 3 of the proof of Lemma 20 we apply Lemma 21 to the
dendrite $\mathbf{T}_{\zeta}$, the point $\zeta$ and a closed convex
cone $U_{\zeta}$ contained in the sector between $\gamma _{\zeta}$
and $e_{\zeta}$ which meets the trees of $n_{\zeta}$ at most at
$\zeta$. The desired continuous family of neurons is obtained by
pasting the constructed family of dendrites obtained in Lemma 21.

The general "peeling"-procedure described in step 3 can be continued
until a point $\zeta ^* \in \Gamma \subset \bd \D$ situated
clockwise (within $\Gamma) $ from the point $1$ is reached.

By assumption $\Gamma \setminus \{1\}$ consists of regular points
for both, $n^+$ and $n^-$, and the $n^+$-halo coincides with the
$n^-$-halo on $\Gamma \setminus \{1\}$. Hence, the obtained neuron
$n_{\zeta^*}$ has the required property: it differs from $n^+$ by a
dendrite attached at $\zeta ^*$. Lemma 20 is proved.
\end{proof}

Lemma 20 yields a continuous family of neurons with halo that joins
the neuron $n_0$ with the neuron $n^+_{t_0} \cup
\mathfrak{T}_{t_0}$. By a change of the $t$-variables we may assume
that the parameter set is again the interval $[0,t_0]$. For $t$
close to $0$ the new neurons coincide with the previous ones and for
$t=t_0$ the new neuron coincides with $n^+_{t_0} \cup
\mathfrak{T}_{t_0}$.

For all $t > t_0$ we attach to the neuron $n_t$  a dendrite
$\mathbf{\mathfrak{T}}_{t}$ with halo and root $\zeta ^*$ of the
underlying tree. The family of dendrites with halo
$\mathbf{\mathfrak{T}}_t$ is chosen continuously depending on $t$
and converging to $\mathbf{\mathfrak{T}}_{t_0}$ for $t \rightarrow
t_0$. A continuous choice of the dendrites can be made since the
halo of the neurons on the arc $\Gamma$ changes continuously.

We obtain a piecewise continuous family of neurons with halo.
Moreover, the family of neurons has one discontinuity point less
than the previous family. Shrink the arc $\Gamma$ (keeping the same
notation) so that the arc still contains the point $1$ and $\Gamma
\setminus \{1\}$ is free from roots of attached trees for all $t \in
[0,1]$.


Consider all (finitely many) discontinuity points $t_j$ (in
increasing order) of the family $n_t$. Apply Lemma 20 successively
to each $n_{t_i}$ and attach to the $n_t, \; t \ge t_i, $ dendrites
that depend continuously on $t$. At each step the arc $\Gamma$ is
shrinken suitably.

We arrive at a continuous family of neurons with halo. Denote the
neurons by\\
$N_t = (\nu' _t, \phi _t, M_t, \overset{\circ}{M}_t)$. All
generalized discs $\nu' _t$ coincide with the closed unit disc with
a number of trees attached. In particular, each generalized disc
$\nu'_t$ contains the tree $T^{ax}_t$ of an axon attached at the
point $1$. For each $t$ there is a number $t'$ such that the
restricted mapping $\phi _t |\overline \D$ coincides with the
original mapping $\Phi _{t'}$ from Lemma 13. Moreover, for $t$ close
to $1$ the restrictions coincide with the mappings from Lemma 13:
$\phi _t |\overline \D = \Phi _t$ for $t$ close to $1$. For $t$
close to $0$ the generalized discs $\nu' _t$ coincide with the unit
disc $ \overline \D$ and the neurons coincide with the original
analytic discs of Lemma 13. They are small discs embedded into $G$
and the values of their halo are small discs embedded into $G$. For
all $t$ the halo $\overset{\circ}{M}_t$ is associated to the lift of
$\Phi _t |\partial \D$ to $\hat G$. In other words, the restriction
of the mapping $\hat M _t = \hat{\mathcal{P}}_0 \circ
\overset{\circ}{M}_t$ to $\partial \D$  coincides with $\hat \Phi _t
|\partial \D$.

In the sequel we need also the following property of the neurons.
Choose parametrizations ${M}_t({\xi}),\;  t \in [0,1], \; \xi \in
\partial \D,$ of the pellicle of $\nu _t$ which depend continuously
on $t$. The property is the following. There exists a compact subset
$\kappa$ of $G$ such that $\bigcup_{t \in [0,1], \xi \in
\partial \D} \overset{\circ}{M}_t({\xi})(\partial \D) \subset
\kappa$. Moreover, let for each $t$ the point ${M}_t({\xi}_0)$ be
the tip of the axon tree of $\nu_t$. Then $\bigcup_{t \in [0,1]}
\overset{\circ}{M}_t({\xi}_0)(\overline \D) \subset \kappa$ and,
hence, in particular, $\bigcup_{t \in [0,1]} \phi _t \circ
{M}_t({\xi}_0)\subset \kappa$ and $\hat {\phi}_t \circ
{M}_t({\xi}_0) \subset \hat i (G) $.



\section{Proof of lemma 13}

Using the continuous family of neurons $N_t$ with halo obtained in
the previous section the proof of Lemma 13 can be completed
essentially along the same lines as the proof of lemma 12. Here are
the details.

Fix an $\varepsilon >0$ which is small compared to the distance of
$\kappa $ to the boundary of $G$. Apply the procedure of continuous
fattening of dendrites (Lemma 16) to all neurons $N_t$ and all
attached dendrites. We obtain a continuous family of analytic discs
with continuously varying halo, denoted by $(D_t, m_t, \psi _t,
\overset{\circ}{m}_t)$, $t \in [0,1]$, for which $\max _{\nu' _t}
|\psi _t - \phi _t| < \varepsilon$ and $\overset{\circ}m_t$ is
$\varepsilon$-close to the halo $\overset{\circ}M_t$ of the
respective original neuron. (We abuse notation for the pellicle and
the halo using the same letter as for the objects related to the
original family $\Phi _t$). The sets $\overline D _t$ are closures
of continuously changing bounded simply connected and smoothly
bounded domains in the complex plane. The sets $\overline D _t$ are
obtained from the closed unit disc by attaching "closed thickened
trees". The "closed thickened axons" play a special role. These are
thin closed neighbourhoods of the interiors of the axon trees
$T^{ax}_t$ that depend continuously on $t$ and are pasted to the
closed unit disc along an arc of the circle. For each $t$ the tip
$\mathfrak{a}_t$ of the axon is the only point of the axon that is
located on the boundary of the respective domain $D_t$.

Since for each $t$ the inclusion $\hat {\phi} _t(\mathfrak{a}_t) \in
\hat i(G)$ holds,
there are closed arcs $\mathfrak{A}_t$ contained in $\bd D_t$,
$\mathfrak{a}_t \in \mathfrak{A}_t$, for which $\hat
\psi_t(\mathfrak{A}_t) \subset \hat i(G)$, provided $\varepsilon$ is
small enough. Choose continuously changing open arcs
$\mathfrak{A}^0_t$ which are relatively compact in $Int
\,\mathfrak{A} _t$ with $\mathfrak{a}_t \in \mathfrak{A}^0_t $.


Use the same notation as in the proof of lemma 12: $\mathcal{S}
_{\bd D_t}
\defined (\overline D_t \times \{0\})\; \bigcup\;
(\partial D_t \times \overline \D)$, $\mathcal{S}_{\bd D_t \setminus
{\mathfrak{A}}^0_t} \defined (\overline D _t \times \{0\}) \bigcup
(\partial D_t \setminus \mathfrak{A}_t^0 \times \overline \D)$ and
$\mathcal{Q} ^t
\defined (\bd D_t \, \times {\bd \D}) \; \bigcup \; (\mathfrak{A}_t \times \overline
\D)$.

Define, as in the proof of Lemma 12, for each $t$ a mapping
$\mathcal{J}_t$ on $\mathcal{S}_{\bd D_t}$ which equals $\psi_t$ on
the central disc $\overline D_t \times \{0\}$ and is equal to the
evaluation map for $\overset{\circ}{m}_t$ on the disc fibers over
$\bd D_t$. The mappings $\mathcal{J}_t$ depend continuously on $t$.

For each neighbourhood $\mathcal{V}$ of $\kappa$ the number
$\varepsilon$ and the arcs $\mathfrak{A} _t$ may be chosen so that\\
$\bigcup _{t \in [0,1]} \mathcal{J}_t (\mathcal{Q}_t) \subset
\mathcal{V}$.

Let $K_t$ denote the following compact subset of $\D \;\bigcup \;
T^{ax}_t$: $K_t \defined r \overline \D \;\bigcup \; [r,1] \;\bigcup
\; T^{ax}_t $ .(Recall that for each $t$ we denote by $T^{ax}_t$ the
tree of the axon of the neuron $n_t$.) Note that $K_t$ is a compact
subset of $ D_t \;\bigcup \; \mathfrak{A}^0_t$. For $t$ close to $1$
$K_t = r\overline \D \;\bigcup \; [0,2] $.

By Lemma 17 there is a continuous family of mappings $\H _t \in
A_{X^2} (D_t \times \D)$ such that for an arbitrary point $z \in
\partial \D$ the mappings $f^z_t, \; t \in [0,1], \;
f^z_t (\zeta) \defined \H_t (\zeta,z), \; \zeta \in \overline D _t,$
define a continuous family of analytic discs with boundary in
$\mathcal{V}$ satisfying the inequality $\max_{K_t}|\psi _t -f^z_t|
< \varepsilon$ for all $t \in [0,1]$.  Moreover, by the special
choice of $\psi _0 = \Phi _0$ Lemma 17 implies that the disc $f^z_0
(\overline \D)$ is entirely contained in $G$.




Fix a point $z \in \partial \D$. An application of lemma 9 to the
family $f^z_t$ produces a family of {\it immersed} discs $f_t$ with
all above listed properties preserved. In particular the boundaries
of the discs $f_t(\partial \D)$ are contained in $\mathcal{V}$.

Take an arbitrary point $p \in \Phi_1 (\D)$. Choosing $r$ close
enough to $1$ we may assume that $p \in \Phi_1 (K_1)$.
Further, we may assume that the family $f_t$ is chosen so that $p
\in f_1 (K_1)$. (This can be achieved considering, in case $X^2 = \C
^2$, small translations of the discs of the family and in the
general case by applying compactly defined holomorphic mappings
close to the identity on $X^2$. ) We proved that $p$ is contained in
the projection $\hat {\mathcal{P}}(\hat G)$.

To choose a standard lift of a neighbourhood of $p$ to $\hat G$ we
reparametrize $f_1$. More precisely, consider the composition $f_1
\circ \varphi _1$ with a conformal mapping $\varphi_1$ from the unit
disc onto $D_1$ such that $f_1 \circ \varphi_1 (0)= p$. For a number
$\textsf{r}<1$ and close to $1$ we consider the function $\zeta
 \rightarrow f_1 \circ \varphi _1 (\textsf{r} \zeta)$ and denote it by $d_p$.
Let $\hat d_p$ be the equivalence class represented by $d_p$.

Consider a standard neighbourhood ${\hat {\mathcal{P}}}:\hat V
\rightarrow Q_p$ of $\hat d_p$ associated to the representative
$d_p$ (see section 3). Here $Q_p$ is a neighbourhood of $p$ in
$X^2$, $\hat{\mathcal{P}}$ is biholomorphic and for $q \in Q_p$ the
classes $\hat d _q = (\hat {\mathcal{P}}| \hat V) ^{-1} (q)$ are
represented by a continuous family of analytic discs $d_q$. For
$q=p$ the disc coincides with the one defined before.

It remains to see that this standard lift of $Q_p$ to $\hat G$ is
compatible with the lift $\hat \Phi $ of $\Phi$. More precisely, let
$(t,z') \in [0,1) \times \D$ be close to $(1,z)$, so that $q
\defined \Phi(t,z')$ is contained in $Q_p$. We have to prove the
following Lemma 22.

\begin{lemma}\label{Lemma 20} The equivalence classes $\hat d _q$ and
$\hat {\Phi} (t,z')$ coincide.
\end{lemma}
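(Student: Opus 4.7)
Both $\hat d_q$ and $\hat\Phi(t,z')$ lie in $\hat G$ and project to $q=\Phi(t,z')$ under $\hat{\mathcal P}$. Since $\hat{\mathcal P}:\hat G\to X^2$ is a local biholomorphism and $\hat G$ is Hausdorff (Lemma \ref{Lemma 7}), any two continuous lifts of a common base map on a connected topological space coincide throughout as soon as they agree at a single point: the set of agreement is open (by the local biholomorphism) and closed (by Hausdorffness). On a small connected open neighborhood $W$ of $(1,z)$ in $[0,1)\times\overline\D$ with $\Phi(W)\subset Q_p$, the maps $(t,z')\mapsto\hat d_{\Phi(t,z')}=(\hat{\mathcal P}|\hat V)^{-1}\circ\Phi$ and $\hat\Phi|_W$ are both continuous lifts of $\Phi|_W$. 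It therefore suffices to exhibit a single coincidence point in $W$.

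\textbf{Producing the coincidence point.} By Lemma \ref{Lemma 1} and the description of $\hat G$ via equivalence classes, $\hat\Phi(t,z')$ is represented for $(t,z')\in\mathfrak c$ by the reparametrized $G_0$-disc $\Phi_t\circ\varphi_{z'}$, and the limit $\tilde\Phi_1(z)\defined\lim_{t\nearrow 1}\hat\Phi(t,z)=[\Phi_1\circ\varphi_z]$ exists in $\hat G$. The key identification is $\hat d_p=\tilde\Phi_1(z)$, i.e.\ $[d_p]=[\Phi_1\circ\varphi_z]$. To obtain this, I would exploit the two available continuous paths of $G_0$-discs: the path $s\mapsto\tilde f_s\defined f_s\circ\varphi_s(\textsf{r}\,\cdot)$ joins a small disc embedded in $G$ (at $s=0$) to $d_p=\tilde f_1$, while the path $s\mapsto\Phi_s\circ\varphi_{z(s)}$, for a continuous $z(s)$ going from a boundary point $z(0)\in\partial\D$ (where $\Phi_0\circ\varphi_{z(0)}$ is equivalent to small discs in $G$ by Lemma \ref{Lemma 8}) to $z(1)=z$, terminates at $\Phi_1\circ\varphi_z$. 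The plan is to choose $z(s)$ — using the immersion property of the $\Phi_s$ and the inequality $|\psi_s-f_s|<\varepsilon$ on $K_s$ — so that the centers $\tilde f_s(0)=\Phi_s(z(s))$ coincide throughout. The resulting pair $(\tilde f_s,\Phi_s\circ\varphi_{z(s)})$ is then a homotopy of $ec$-pairs starting at $(d_p,\Phi_1\circ\varphi_z)$ and ending at a pair of discs embedded in $G$ with a common center, which are equivalent by Definition \ref{Definiton E}(1); Definition \ref{Definiton E}(2) then gives $d_p\sim\Phi_1\circ\varphi_z$, so $\hat d_p=\tilde\Phi_1(z)$. Continuity of both lifts transfers this identity from the boundary limit $(1,z)$ to a nearby interior point $(1-\delta,z)\in W$.

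\textbf{Conclusion and main obstacle.} With one point of coincidence in $W$ established, the uniqueness principle from the first paragraph forces $\hat d_{\Phi(t,z')}=\hat\Phi(t,z')$ on all of $W$, which includes the original point $(t,z')$. The principal technical difficulty is aligning centers in the $ec$-homotopy: the family $\tilde f_s$ is built through the Riemann--Hilbert approximation and the fattening of dendrites, and its centers $\tilde f_s(0)$ follow the neuron construction rather than a prescribed family of reparametrizations of $\Phi_s$. Solving implicitly for $z(s)$ to make $\Phi_s(z(s))=\tilde f_s(0)$ exactly throughout the homotopy uses the immersivity of $\Phi_s$ together with openness of $\GN$ (Lemma \ref{Lemma 9}), and requires the approximation parameter $\varepsilon$ and the scale $\textsf{r}$ to be chosen sufficiently small so that each intermediate disc remains a $G_0$-disc.
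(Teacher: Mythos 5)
Your overarching framework---two continuous lifts of $\Phi$ to $\hat G$ defined on a connected set agree everywhere as soon as they agree at one point, so it suffices to exhibit a single coincidence point---is exactly the mechanism the paper uses (the paper phrases it as constructing two lifted curves $\hat\gamma_d$ and $\hat\gamma_\Phi$ with equal projection and a common terminating point, then reading off equality at the initial points). However, your production of the coincidence point rests on an assertion that is not available and is in general false: that ``$\hat\Phi(t,z')$ is represented for $(t,z')\in\mathfrak c$ by the reparametrized $G_0$-disc $\Phi_t\circ\varphi_{z'}$.'' Nothing in the hypotheses of Proposition~3$''$ or Lemma~\ref{Lemma 11} guarantees $\Phi_t(\partial\D)\subset G$; the boundary of $\Phi_t$ may leave $G$ entirely, in which case $\Phi_t\circ\varphi_{z'}$ is not even a $G$-disc and the class $[\Phi_t\circ\varphi_{z'}]$ is not defined. (If the boundaries of all the $\Phi_t$ did lie in $G$, the entire apparatus of dendrites, halos and neurons of sections 5--9 would be unnecessary.) Closely related, you invoke the existence of the limit $\lim_{t\nearrow 1}\hat\Phi(t,z)$ in $\hat G$, but the continuous extension of $\hat\Phi$ across the face $\{1\}\times\D$ is precisely what Lemma~\ref{Lemma 13} is trying to establish, so using it as a premise here is circular.

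The paper avoids both problems by exploiting the axon: both lifted curves are pushed along a path $\beta$ inside $\{t\}\times K_t$ from $(t,z')$ to a point near the tip $\mathfrak a$ of the axon tree, whose image under $\phi$ lies in the neighbourhood $\mathcal V$ of $\kappa\subset G$; from there one continues inside $G$ to a point $q_1\in d_p(\D)$, where the lifted class is the canonical one (small discs embedded in $G$) and Lemma~\ref{Lemma 7} closes the loop. This uses no claim about the form of $\hat\Phi$ away from $G$. Your alternative plan---aligning centers of the family $\tilde f_s$ built from the Riemann--Hilbert approximation with a family of reparametrizations $\Phi_s\circ\varphi_{z(s)}$ so as to produce an $ec$-homotopy---is the step you yourself flag as the principal difficulty, and it does not clearly go through: the centers $\tilde f_s(0)$ are created by the fattening and approximation process and there is no reason they should lie in $\Phi_s(\D)$ for intermediate $s$, so the equation $\Phi_s(z(s))=\tilde f_s(0)$ need not have a solution. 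The coincidence point therefore remains unestablished, and this is the genuine gap in the proposal.
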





\medskip

\noindent {\it Proof of Lemma 22.} Recall that for $t$ close to $1$
$\hat \phi _t | \partial \D = \hat  \Phi _t| \partial \D $. For $t <
1$ close to $1$ we extend $\hat \phi _t$ to $\overline \D$ by $\hat
\phi _t| \overline \D \defined \hat \Phi _t$. It is enough to find
two curves $\hat {\gamma} _d$ and $\hat {\gamma}_{\Phi}$ in $\hat G$
with equal projections $\hat {\mathcal{P}} \circ \hat {\gamma} _d =
\hat {\mathcal{P}} \circ \hat {\gamma}_{\Phi}$ such that for the
initial points of the curves $\hat {\gamma} _d(0) = \hat d _{q}$ and
$\hat {\gamma}_{\Phi}(0)= \hat{\Phi}(t,z') (=\hat{\phi}(t,z'))$ and
the terminating points of the curves $\hat {\gamma} _d$ and $\hat
{\gamma}_{\Phi}$ coincide.

Each curve will be the sum of two curves. To define the first part
of $\hat {\gamma}_{\Phi}$ we choose a number $\mathfrak{a} \in
[0,2]$ close to $2$ and let $\beta : [0,\mathfrak{a}] \rightarrow
\{t\} \times K_t$ be a curve that joins the point $(t,z')$ with the
point $(t,\mathfrak{a})$. Recall that for $t$ close to $1$ the set
$K_t$ has the form $r\overline \D \cup [0,2]$.
Define the first part of $\hat {\gamma}_{\Phi}$ by $\hat
{\gamma}_{\Phi}(\tau) = \hat{\phi}(\beta(\tau)), \;\tau \in
[0,\mathfrak{a}]$. Hence, as required $\hat {\gamma}_{\Phi}(0) =
\hat{\phi}(t,z')$. For the projected curve we have $\hat
{\mathcal{P}} \circ \hat {\gamma}_{\Phi}(\tau) =
{\phi}(\beta(\tau)), \tau \in [0,\mathfrak{a}]$.

Since for $t$ close to $1$ the point $2$ is the tip of the axon tree
$T_t^{ax}$ the inclusions  $\phi_1(2) = \phi (1,2) \in \kappa$,
$\hat {\phi}(1,2) = \hat i \circ {\phi}(1,2) \in \hat i (G) $ hold
(see the end of section 9). Hence we may assume that
${\phi}(\beta)(\mathfrak{a})$ is contained in the neighbourhood
$\mathcal{V}$ of $\kappa$ and $\hat{\phi}(\beta(\mathfrak{a})) =\hat
i \circ {\phi}(\beta(\mathfrak{a}) $ is in $\hat i (G)$.

To define the first part of $\hat {\gamma} _d$ we find a continuous
family of $G$-discs $d^{\tau}$ that are all close to $d_q$ and have
center $d^{\tau} (0) = {\phi}(\beta(\tau))$ so that $d^0 = d_q$. For
this we recall that $f_1$ is $2\varepsilon$-close to $\phi_1$ on
$K_1$ (since it is $\varepsilon$-close to $\psi _1$ on $K_1$ and
$\psi _1$ is $\varepsilon$-close to $\phi_1$ on $\nu_1$) and $\phi_1
= \Phi_1$ on $\overline \D$. Also, $d_p$ differs from $f_1$ by a
reparametrization. Further, if $(t,z')$ is close to $(1,z)$ then
$d_{q}$ is $\varepsilon$-close to $d_p$ on $\overline \D$. Moreover,
for $t$ close to $1$ $\max_{K_1} |\phi(t,z) -\phi(1,z)| <
\varepsilon$. Hence, in case $X^2= \C ^2$, there are points
$z_{\tau} \in \D$ depending continuously on $\tau \in
[0,\mathfrak{a]}$ and  a continuous family of translations
$d_q^{\tau}$ of $d_q$ such that the relation $d_q ^{\tau} (z_{\tau})
= \phi (\beta (\tau)),\; \tau \in [0,\mathfrak{a}],$ holds and
$d_q^0 = d_{q}$. For general $X^2$ instead of translations one can
use a continuous family of compactly defined holomorphic maps close
to the identity on $X^2$. Renormalize the discs $d_q^{\tau}$ so that
the centers become $\phi(\beta(\tau)))$ and let $\hat {\gamma}
_d(\tau)$ be the equivalence class represented by the renormalized
disc $d^{\tau}
\defined d_q^{\tau} \circ \varphi_{z_{\tau}}$.

For defining the second part of the curves we consider an arc
$\gamma:[\mathfrak{a},3] \rightarrow \mathcal{V} \subset G \subset
X^2$ which joins the point $\phi(\beta(\mathfrak{a})))$ with a point
$q_1$ in the image $d_p(\D)= f_1 \circ \varphi _1 (\textsf{r} \D)$
which is close to $\phi_1(2)=\phi(1,2) \in \kappa$. Define $\hat
{\gamma}_{\Phi}$ on $[\mathfrak{a},3]$ to coincide with the lift
$\hat i \circ \gamma $ of $\gamma$.

To define $\hat {\gamma} _d$ on $[\mathfrak{a},3]$ we consider again
a continuous family
of small perturbations of $d_q$ such that for each $\tau$ the
respective disc passes through ${\gamma}(\tau)$, for $\tau =
\mathfrak{a}$ the disc coincides with the disc $d_q^\mathfrak{a}$
defined before and for $\tau = 3$ the disc equals $d_p$.

 Reparametrize the discs so that the centers become
 ${\gamma}(\tau)$,
 and consider the equivalence classes represented by the
 reparametized discs. We obtain a curve $\hat {\gamma} _d|[\mathfrak{a},3]$
which is the second part of $\hat {\gamma} _d$.  Note that $\hat
{\gamma} _d(3)$ is represented by a reparametrization of $d_p$ for
which the center is the point $q_1$. With a suitable choice of $q_1
\in d_p(\D)$ we may assume that the conditions of Lemma 7 are
satisfied and, hence, $\hat {\gamma} _d(3)$ coincides with the class
represented by small discs in $G$ centered at $ {\gamma} _d(3)$.
Since the same is true for $\hat {\phi}(\gamma(3))$ the proof of
Lemma 22 is completed.\hfill $\square$

\medskip

Lemma 13 and, hence, the theorem are proved. \hfill $\square$

\section{Proof of the corollaries}

\medskip

\noindent {\it Proof of Corollary 1.} By Theorem 1 and Lemma 1 for
each point $p$ in the envelope of holomorphy $\tilde G$ there exists
an immersed analytic disc $\tilde d: \overline \D \rightarrow \tilde
G$ such that $ \tilde d (0) = p$ and $\tilde d (\partial \D) \subset
i(G) \subset \tilde G$.

We may assume that $\tilde d$ extends to an analytic immersion of
$(1 + \varepsilon ) \D$ for some positive number $\varepsilon$. The
mapping can be uniformly approximated on $(1 + 1/2 \varepsilon ) \D$
by an immersion of the disc with only double self-intersection
points and transversal self-intersection. This is a standard
Morse-Sard type argument. The obtained disc can be considered as a
nodal curve with boundary, i.e. as a singular Riemann surface with
boundary all singularities of which are nodal singularities. By
results of Ivashkovich and Shevchishin on the moduli space of
Riemann surfaces (see  \cite{IvSh}, theorem 3.4 and lemma 3.8) the
nodal curve is uniformly close to a smooth Riemann surface embedded
into $\tilde G$. \hfill $\square$

\medskip

\noindent {\it Remark.} Theorem 1 implies the result of \cite{Ke}
that the natural homomorphism $\varphi: \pi_1(G) \rightarrow \pi_1
(\tilde G)$ induced by inclusion is surjective. Indeed, by the
following argument any closed curve $\gamma$ in $\tilde G$ is
homotopic in $\tilde G$ to a curve in $i(G)$. Take an excrescence
$\gamma^*$ of $\gamma$, $\gamma^*:
\partial \D \rightarrow \tilde G$, which lifts to a
mapping $\overset{\circ}{\gamma}^*:\partial \D \rightarrow \GN$.
Note that $\gamma ^*$ is homotopic to $\gamma$. Let
$\overset{\circ}{\gamma}^*(\zeta)(z),\,\zeta \in \partial \D, \, z
\in \overline \D$, be the evaluation mapping. The curve $\zeta
\rightarrow \overset{\circ}{\gamma}^*(\zeta)(1)$ is homotopic in
$\tilde G$ to $\gamma^*$and contained in $i(G)$.

\medskip

\noindent {\it Proof of Corollaries 2 and 3.} Consider the following
slightly more general situation which includes the case of each of
the two corollaries. Let $S$ be an orientable compact connected
surface with or without boundary. Let $f: S \rightarrow \tilde G$ be
a continuous mapping. If the boundary $\partial S$ is not empty we
will assume that $f(\partial S) \subset i(G)$. In case of a closed
surface $S$ we think about $f: S \rightarrow \tilde G$ representing
a homology class in $H_2(\tilde G)$. The case when $S = b^2$ is a
disc corresponds to the homotopy of the loop representing an element
in the kernel of the homomorphism $\varphi$ in Corollary 3. We may
always deform the surface so that $f(S)$ contains the point $p$. Say
$p = f (\zeta^*)$.

Since $\tilde G = \hat G$ and locally each mapping into $\hat G$
lifts to a mapping into $\GN$ we may consider a simplicial
decomposition of $S$ which is fine enough so that the following
properties hold:
\begin{itemize}

\item [(1)] On each $2$-simplex $\sigma _j$ of the decomposition there is a continuous lift
$\overset{\circ}{f} _j : \sigma _j \rightarrow \GN$ of $f_j \defined f\mid \sigma _j$ to $\GN$.

\item [(2)] Consider an arbitrary edge $e_k$ of the simplicial complex. Let $\sigma _i$ and
$\sigma _j$ be the adjacent $2$-simplices. For $\zeta \in  e_k$ we
denote by $(\overset{\circ}{f} _i (\zeta), \overset{\circ}{f} _j
(\zeta))$ the equivalent discs corresponding to the two simplices by
property (1). We require that there is a family of dendrites
$\mathbf{T}_{i,j}(\zeta)$ with punctured halo associated to the
family $(\overset{\circ}{f} _i (\zeta), \overset{\circ}{f} _j
(\zeta))$ of pairs of discs by lemma 5, depending continuously on
the point $\zeta$ and such that the underlying trees of the
dendrites are homeomorphic.

\item [(3)]  The point $\zeta ^*$ is a vertex of the simplicial decomposition.
\end{itemize}

We will use now properties (1) and (2) to obtain a homotopy of the
mapping $f$ to a new mapping $f^1:S \rightarrow \tilde G$ with the
following property. There is a tree $\mathfrak{T} \subset S$ such
that $f^1|S\setminus \mathfrak{T}$ lifts to $\GN$. Moreover, the
lifted mapping extends continuously to the pellicle of
$\mathfrak{T}$ (the latter defined in the above sense assuming a
simply connected neighbourhood of $\mathfrak{T}$ in $S$ being
extended to a sphere).

To find a suitable tree $\mathfrak{T}$ we will color each
$1$-simplex either white or black in such a way that the union of
black simplices constitutes a (connected) tree wich contains each of
the vertices of the triangulation. The coloring is done as follows.
Since $S$ is connected the union of all $1$-simplices (edges) of the
triangulation is connected. If the boundary $\partial S$ is not
empty then all edges contained in it are colored white. Since for
each $2$-simplex no more than one adjacent edge is contained in
$\partial S$ the union of uncolored edges is connected and contains
all vertices of the triangulation. If the union of uncolored edges
contains a closed loop we give white color to one of the edges
constituting the loop. The union of uncolored edges still
constitutes a connected set and contains all vertices. After
finitely many steps the union of uncolored edges is a connected set
without closed loops containing all vertices. Color the so far
uncolored edges black. We obtained a coloring with the desired
properties. Denote the tree constituted by the union of all black
edges by $\mathfrak{T}'$.

Consider the barycentric subdivision of the simplicial complex. Associate to each edge $e_k$
of the original complex the union $\tilde {\sigma} _k$ of those four $2$-simplices of the subdivision
that contain a "half" of $e_k$. The $\tilde {\sigma} _k$ have pairwise disjoint interior and cover $S$.

Let $e_k$ be a white edge. We describe now a homotopy of the
restriction $f|\tilde {\sigma} _k$ to a mapping $f^1 |\tilde
{\sigma} _k$ which fixes the values at the boundary of $\tilde
{\sigma} _k$. Let $\sigma_i$ and $\sigma_j$ be the $2$-simplices of
the original simplicial complex that are adjacent to $e_k$ and let
$\mathbf{T}_{i,j}(\zeta),\; \zeta \in e_k,$ be the dendrites
associated to $e_k$ according to property 2. Let further
$m_{i,j}(t,\zeta),\; t \in [0,1], \zeta \in e_k,$ be a
parametrization of the pellicles of the trees $T_{i,j}(\zeta)$
depending continuously on $\zeta$.

Cut $\tilde {\sigma} _k$ along $e_k$ and glue back the union
$\bigcup _{t \in [0,1], \zeta \in e_k} m_{i,j}(t,\zeta)$ with the
natural gluing homeomorphism on the two sides of $e_k$ (the point
$m_{i,j}(0,\zeta)$ (respectively, the point $m_{i,j}(1,\zeta)$) is
identified with the point on the side of $\sigma_i$ (respectively,
$\sigma _j$) over $\zeta \in e_k$). We obtain a (singular) closed
square $\sigma_k^*$. The mapping $f|\tilde \sigma _k$ extends to a
continuous mapping on $\tilde \sigma _k \, \cup  \, \bigcup _{t \in
[0,1], \zeta \in e_k} T_{i,j}(\zeta) $, moreover, it extends to a
continuous mapping $f_k^*$ on $\sigma_k^*$ which lifts to $\GN$.
Moreover, reparametrize $\sigma_k^*$ in the following way. Consider
disjoint trees $T_0= T_0^k$ and $T_1=T_1^k$, both homeomorphic to
the underlying tree of the dendrites $\mathbf{T}_{i,j}(\zeta)$,
having their root respectively at the endpoints $\zeta_0$ and
$\zeta_1$ of the edge $e_k$, being contained in $\tilde {\sigma} _k$
and each meeting the boundary of $\tilde {\sigma} _k$ exactly at its
root.

Let $\varphi$ be a homeomorphism of the set $\tilde {\sigma} _k
\setminus (T_0 \cup T_1)$ onto $\sigma_k^* \setminus
(T_{i,j}(\zeta_0)\cup T_{i,j}(\zeta_1))$
which is the identity on the boundary $\partial {\tilde {\sigma} _k
}$. Require, moreover, that $\varphi$ extends continuously to the
pellicle of $T_0$ ($T_1$, respectively) and maps it homeomorphically
onto the pellicle of $T_{i,j}(\zeta_0)$ ( $T_{i,j}(\zeta_1)$,
respectively).
Put $f^1|\tilde {\sigma} _k \setminus (T_0 \cup T_1) \defined f^*_k
\circ \varphi|\tilde {\sigma} _k \setminus (T_0 \cup T_1)$. This
mapping extends to a continuous mapping on $\tilde {\sigma} _k$,
also denoted by $f^1$. Since each rooted tree is contractible to its
root and the construction an be made for subtrees and so that it
depends countinuously on the choice of subtrees, the mappings
$f|\tilde {\sigma} _k$ and $f^1|\tilde {\sigma} _k$ are homotopic.

As required, the restriction $f^1|\tilde {\sigma} _k \setminus (T_0
\cup T_1)$ lifts to $\GN$. The lift extends continuously to the
punctured pellicle of $T_0$ and $T_1$. Attach the trees $T_0=T_0^k$
and $T_1=T_1^k$ to $\mathfrak{T}'$.

Proceed in the same way with each of the white edges. We obtain a
new tree $\mathfrak{T} \subset S$ and a homotopy of $f$ on the whole
of $S$ to a mapping $f^1$. The restriction $f^1|S \setminus
\mathfrak{T}$ of the final mapping $f^1$ admits a lift
$\overset{\circ}{f^1}$ to $\GN$ which extends continuously to the
pellicle of the tree $\mathfrak{T}$ .

Approximate the mapping $f^1: \mathfrak{T} \rightarrow \tilde G$ of
the tree by a true analytic disc $f^2 : \overline \Delta \rightarrow
\tilde G$. Here $\Delta$ denotes a small simply connected
neighbourhood of $\mathfrak{T}$ on $S$ which we endow with complex
structure. Extend the mapping to a continuous mapping $f^2:S
\rightarrow \tilde G$ which equals $f^1$ outside a small
neighbourhood of the closure $\overline \Delta$. If $f^2$ is close
to $f^1$ on $S$ then the two mappings are homotopic and $f^2|S
\setminus \Delta$ lifts to a mapping $\overset{\circ}{f^2}|S
\setminus \Delta \rightarrow \GN$.


Note that the (images of the) circle fibers $\bigcup _{\zeta \in S
\setminus \Delta}\overset{\circ}{f^2}(\zeta)(\partial \D) $ are
contained in $G$. Moreover, there is an open subset $U_0$ of $S$
such that for $\zeta \in U_0$ the (image of the) whole disc fiber
$\overset{\circ}{f^2}(\zeta)(\overline \D)$ is contained in $G$. For
each $k$ the points in $\tilde {\sigma} _k$ which are close to a
leaf of $T_0^k$ or $T_1^k$ belong to $U_0$. If $\Delta$ is a
sufficiently small neighbourhood of $\mathfrak{T}$ its boundary
$\partial \Delta$ intersects $U_0$ since $\mathfrak{T}$ contains the
trees $T_0^k$ and $T_1^k$ for each white edge $e_k$ . Hence, the
mapping $f^2|\Delta$ is a disc neuron and the restriction
$\overset{\circ}{f^2}|\partial \Delta$ is its halo.

We may consider the lift $\overset{\circ}{f^2}$ of $f^2$ up to
changing it on the set $U_0$. More precisely, consider lifts
$\overset{\circ}{F^2}$ of  $f^2$ on $S \setminus \Delta$ such that
$\overset{\circ}{F^2} = \overset{\circ}{f^2}$ outside $U_0$ and for
all $\zeta$ in $U_0$ the property $\overset{\circ}{F^2}(\zeta)
(\overline \D) \subset G$ holds. We call such lifts
$\overset{\circ}{F^2}$ admissible changes of $\overset{\circ}{f^2}$.

Lemma 17 and 18 apply to $f^2|\overline \Delta$ and its halo (and
the Stein manifold $\tilde G$). Lemma 18 provides an approximation
(take, for instance, the mapping $\mathfrak{H}(\zeta,\cdot)$ in the
notation of lemma 17) of $\overset{\circ}{f^2}(\zeta), \; \zeta \in
\partial \Delta \setminus U_0$, and (the proof of) Lemma 17 states that after an
admissible change on $U_0$ we obtain a new lift
$\overset{\circ}{f^3}$ on $\partial \Delta$ of the same mapping
$f^2|\partial \Delta$ such that the Riemann-Hilbert boundary value
problem is solvable: There exists a section $\partial \Delta
\backepsilon \zeta \rightarrow \overset{\circ}{f^3}(\zeta)(g(\zeta))
\in \bigcup _{\zeta \in
\partial \Delta} \overset{\circ}{f^3}(\zeta)(\partial \D)$ which coincides
with the boundary values of an  analytic disc in $\tilde G$. This
disc is a $\G$-disc. Denote it by $F(\zeta),\; \zeta \in \Delta$.
The mappings $\overline \Delta \backepsilon \zeta \rightarrow
\overset{\circ}{f^3}(\zeta)(r g(\zeta)) \in \bigcup _{\zeta \in
\partial \Delta} \overset{\circ}{f^3}(\zeta)(\overline \D)$, $r \in [0,1],$
provide a homotopy of mappings into $\tilde G$ joining
$f^2|\overline \Delta$ with $F|\overline \Delta$.

Extend $\overset{\circ}{f^3}$ to the whole set $S \setminus \Delta$
as a continuous lift of $f^2$ such that the extended mapping equals
$\overset{\circ}{f^2}$ outside a neighbourhood of $\partial \Delta$.
Denote the mapping again by $\overset{\circ}{f^3}$. After admissible
changes of the mapping $\overset{\circ}{f^3}$ on $U_0$ it remains to
find a section $S \setminus \Delta \backepsilon \zeta \rightarrow
\overset{\circ}{f^3}(g(\zeta)) \in   \bigcup _{\zeta \in S^\setminus
\Delta} \overset{\circ}{f^3}(\zeta)(\partial \D)$ extending the
section found before on $\partial \Delta$. Since $U_0$ intersects
$\tilde \sigma _k$ for each white edge $e_k$ this is always
possible. The new mapping $F$ is now defined on $ S \setminus
\Delta$ by this section: $F(\zeta) = \overset{\circ}{f^3}(g(\zeta)),
\zeta \in S \setminus \Delta$, and the homotopy is given by
$\overset{\circ}{f^3}(rg(\zeta))$, $r \in [0,1]$.

Note that the disc $\Delta$ contains the point $\zeta ^*$. The
construction can be made in such a way that $F$ is close to $f$ in a
neighbourhood of $\zeta^*$. A small perturbation of the surface $F:S
\rightarrow \tilde G$ will pass through $p$.

Corollaries 2 and 3 are proved. \hfill $\square$

\medskip

\noindent {\it Proof of Corollaries 4 and 5.} The proof uses
Corollaries 2 and 3. Let $\Omega $ be a strictly pseudoconvex domain
in a Stein surface $X^2$, $\Omega = \{\rho < 0 \}$ for a strictly
plurisubharmonic function $\rho$ defined in a neighbourhood of the
closure $ \overline \Omega $ of $\Omega$. Let $G = \{0< \rho <
\varepsilon\}$ for a small positive number $\varepsilon$ so that
$\rho$ does not have critical points in $G$. Then $\tilde G = \Omega
_{\varepsilon} \defined \{\rho < \varepsilon\}$. Denote by
$\mathfrak{I}$ a retraction of $\Omega _{\varepsilon}$  onto
$\overline \Omega$.

Let $f:S \rightarrow  \overline \Omega$ be a continuous mapping of
an orientable connected compact surface. If the boundary $\partial
S$ is not empty we require that $f(\partial S) \subset \overline
\Omega$. Consider $f$ as a mapping into $\tilde G = \Omega
_{\varepsilon}$. If $\partial S$ is not empty we perturb the mapping
slightly so that $f(\partial S) \subset G$. By the proof of the
Corollaries 2 and 3 there is a homotopy of $f$ (in $\Omega
_{\varepsilon}$) to a mapping $F_1: S \rightarrow \Omega
_{\varepsilon} $ and a disc $\Delta \subset S$ such that
$F_1|\overline \Delta $ is an analytic disc and $F_1(S \setminus
\Delta)$ is contained in $G$. We may assume that $\Delta $ is not
empty. After a small perturbation of $F_1$ the analytic disc
$F_1(\Delta)$ has no self-intersection points on $\partial \Omega$
and intersects $\partial \Omega$ transversally. Let $\Delta_1$ be
the subset of $\Delta $ that is mapped into $\Omega$: $\Delta_1
\defined \{\zeta \in \Delta: F_1(\zeta) \in \Omega\}$. By the
maximum principle for the function $\rho$ the set $\Delta_1$ is the
union of simply connected planar domains. If $\Delta_1$ is connected
then $\mathfrak{I } \circ F_1$ is the desired mapping.

If $\Delta_1$ is not connected, let $\delta_1,...,\delta _N$ be its
connected components. There are pairwise disjoint arcs
$\gamma_1,...\gamma_{N-1}$ on $\Delta$ without self-intersections
such that $\gamma_i$ joins a point in $\partial \delta_i$ with a
point in $\partial \delta_{i+1}$ and does not meet the union of the
$\overline \delta _i$ otherwise. After a further (small) homotopy of
the mapping $F_1|\Delta \setminus \bigcup \overline \delta _i$
inside $\Omega _{\varepsilon} \setminus G$ which fixes the mapping
on the union of the boundaries $\bigcup \partial \delta _i$ we may
assume that the arcs $F_1 ( \gamma _i)$ are contained in $\partial
\Omega$, are pairwise disjoint without self-intersection points and
meet the union of the $F_1(\partial \delta _i)$ exactly at the
endpoints of the arcs. After approximating the arcs and the mapping
$F_1$ we may assume that the arcs are Legendrian arcs in $\partial
\Omega$. (It is well-known in contact geometry that arbitrary curves
in contact manifolds may be $C^0$ approximated by Legendrian curves,
for an elementary proof see, e.g. \cite{Gei}). We arrived at the
union of analytic discs with Legendrian arcs $F_1: \cup \overline
\delta _i \bigcup \cup \gamma_i \rightarrow \overline \Omega$.

\begin{lemma}\label{Lemma 23}
Let $E \subset \C$ be a connected compact simply connected set
consisting of the union of pairwise disjoint closed discs and
pairwise disjoint arcs meeting the discs at most at their endpoints.
Let $\Omega$ be a relatively compact strictly pseudoconvex domain in
a Stein surface $X^2$ and let $f:E \rightarrow \overline \Omega$ be
a continuous mapping for which the restriction to each closed disc
in $E$ is an analytic disc with boundary in $\partial \Omega$ and
each of the arcs is a Legendrian arc in $\partial \Omega$.

Then the mapping can be approximated by a true analytic disc $F:
\Delta \rightarrow \overline \Omega$ with boundary in $\partial
\Omega$. Here $\Delta$ is a simply connected planar domain with $E
\subset \overline \Delta$ and $\Delta$ is contained in a small
neighbourhood of $E$. Moreover, if $z$ is the tip of an arc in $E$
(not contained in the boundary of any of the closed discs in $E$)
then $\Delta $ can be chosen so that $z \in \partial \Delta$ and
$F(z) = f(z)$.

\end{lemma}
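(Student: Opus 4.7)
The plan is to fatten $E$ to a simply connected planar domain $\Delta$ inside a small neighborhood of $E$, and to produce $F$ in three stages: first extend $f$ smoothly to a neighborhood of $E$ with $\bar\partial f$ vanishing to high order along $E$; then correct to a true holomorphic map on $\Delta$ by solving a small $\bar\partial$-problem in $\mathbb{C}^4$ along the embedding $\mathfrak{F}:X^2\hookrightarrow\mathbb{C}^4$; finally push the boundary of the resulting disc onto $\partial\Omega$ by a Riemann--Hilbert type perturbation in the totally real direction.

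First I would construct local holomorphic extensions along each piece of $E$. Each analytic disc $f|\overline{\delta_i}$ is by hypothesis the restriction of a holomorphic immersion defined on an open neighborhood of $\overline{\delta_i}$ in $\mathbb{C}$. Each Legendrian arc $f(\gamma_j)\subset\partial\Omega$ is a real one-dimensional arc contained in the complex contact distribution of $\partial\Omega$; thus $Tf(\gamma_j)$ and $JTf(\gamma_j)$ are linearly independent over $\mathbb{R}$, so $f(\gamma_j)$ is totally real in $X^2$. By the standard totally real extension theorem, the parametrization $f|\gamma_j$ extends to a holomorphic immersion on a small planar neighborhood of $\gamma_j$. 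At each point where an arc $\gamma_j$ meets a disc $\overline{\delta_i}$ these two extensions can be matched to arbitrarily high order in a neighborhood of the junction because the boundary value of the disc and the Legendrian arc both lie in $\partial\Omega$ and share the same $1$-jet along a chosen parametrization. A smooth partition of unity on a thin neighborhood of $E$ then glues the extensions to a smooth map $\tilde f$ whose $\bar\partial$-derivative is supported off $E$ and can be made uniformly $\varepsilon$-small on any planar neighborhood $\Delta$ of $E$ whose boundary lies in the transition region.

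Next I would promote $\tilde f$ to a true analytic disc, following the pattern of Lemma~16. Choose $\Delta$ simply connected, bounded, and smoothly bounded, with $E\subset\overline\Delta$ and $\Delta$ contained in a preassigned small neighborhood of $E$. Compose $\tilde f$ with $\mathfrak{F}$, write $\bar\partial(\mathfrak{F}\circ\tilde f) = \omega$ for the small $(0,1)$-form $\omega$ on $\Delta$, and solve $\bar\partial u=\omega$ on $\Delta$ componentwise with uniform sup-norm control (possible since $\Delta$ is simply connected and bounded). Setting $F_0 \defined \mathfrak{F}\circ\tilde f - u$ yields a holomorphic mapping $\Delta\to\mathbb{C}^4$ lying in a small tubular neighborhood of $\mathfrak{F}(X^2)$. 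Composition with a holomorphic retraction onto $\mathfrak{F}(X^2)$ and with $\mathfrak{F}^{-1}$ gives a holomorphic map $F_1:\overline\Delta\to X^2$ that uniformly approximates $f$ on $E$ (and hence approximates it on $\overline\Delta$ if $\Delta$ is chosen thin enough). A transversality argument as in Lemma~9 perturbs $F_1$ to an immersion while preserving smallness.

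Finally, the boundary values $F_1(\partial\Delta)$ lie in an $\varepsilon$-neighborhood of $\partial\Omega$ rather than in $\partial\Omega$ itself, and we must correct this by a small normal perturbation. Because $\partial\Omega$ is strictly pseudoconvex and the strip near $\partial\Delta$ maps into a uniformly totally real neighborhood of $\partial\Omega$ (away from the ``free'' tip $z$, where strict pseudoconvexity gives room to work on the $\overline\Omega$ side), the Riemann--Hilbert technique used in Lemmas~17--18, applied to the family of totally real boundary fibers $T_\zeta\partial\Omega\cap JT_\zeta\partial\Omega^\perp$ over $\zeta\in\partial\Delta$, produces a holomorphic correction $v$ on $\overline\Delta$ with $F_1+v$ mapping $\partial\Delta$ into $\partial\Omega$ and $\Delta$ into $\overline\Omega$. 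At the prescribed tip point $z\in\partial\Delta$ the correction can be arranged to vanish by standard interpolation (the Riemann--Hilbert data is free to be prescribed at one boundary point after a conformal reparametrization). Calling the resulting map $F$, this gives the required analytic disc with $F(z)=f(z)$.

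The main obstacle is the last stage: ensuring that the boundary perturbation $v$ is globally small and keeps the interior inside $\overline\Omega$ across the junctions, where the boundary curve turns sharply from an analytic disc boundary to a Legendrian arc. The key point is that the natural Riemann--Hilbert fibers along $\partial\Omega$ degenerate smoothly through such junctions provided the matching of extensions in stage one was made to sufficiently high order, so that the corner is smoothed out in $F_1$ itself; strict pseudoconvexity of $\Omega$ then forces the corrected boundary to lie on the $\overline\Omega$-side. The prescribed-tip assertion is a free bonus once the constructions are made equivariant with respect to a conformal normalization sending $z$ to a fixed boundary point of $\Delta$.
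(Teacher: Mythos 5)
The overall plan you adopt (smooth extension with small $\bar\partial$, followed by a $\bar\partial$-correction in $\C^4$ via the embedding $\mathfrak{F}$, followed by a boundary correction pushing $\partial\Delta$ onto $\partial\Omega$) is genuinely different from the paper's argument, which is geometric rather than analytic: there the author glues tubular Riemann domains of the analytic discs to a thin collar $V$ of $f(E\setminus\mathrm{Int}\,E)$ along the circles $f(\partial\delta_i)$ to obtain a strictly pseudoconvex Riemann domain $\mathcal{R}$ over $X^2$ diffeomorphic to a ball (following Shcherbina's gluing of tubular neighbourhoods of arcs to strictly pseudoconvex domains), then replaces each Legendrian arc by a chain of small transversally intersecting analytic discs attached to $M=V\cap\partial\Omega$ from the pseudoconvex side, realizes the union of all these embedded discs as the zero set of a product $\prod\mathcal{F}_i$ of holomorphic functions on $\mathcal{R}\cong B^4$, and finally uses Orevkov's lemma that $\{\prod\mathcal{F}_i=\eta\}\cap\overline\Omega$ is an analytic disc for generic small $\eta$, with the tip condition handled by adjusting $g_N$ and $\eta$.

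Your first two stages are workable but need a repair and a substantial third stage that you have not actually supplied. In stage one, a smooth (or $C^1$) Legendrian arc need not admit any holomorphic extension of its parametrization; you must first approximate the Legendrian arcs by real-analytic Legendrian arcs before invoking extension along a totally real curve. This is a fixable omission. The genuine gap is stage three. The ``Riemann--Hilbert technique of Lemmas~17--18'' does not do what you need: those lemmas produce, for a torus of analytic disc boundaries fibred over $\partial D$, a holomorphic family $\H$ of analytic discs with boundary near the torus, exploiting that each boundary fibre is itself the boundary of a prescribed analytic disc. Here, by contrast, you must attach a disc to the real hypersurface $\partial\Omega$, a boundary value problem of a different (and harder) analytic type. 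Moreover $\partial\Omega$ is a strictly pseudoconvex hypersurface, hence has a complex tangent line at every point and is in particular \emph{not} totally real, so the invocation of ``totally real boundary fibers'' is not correct; the relevant data is a strictly pseudoconvex hypersurface with a complex tangential degeneration direction along each Legendrian arc. The geometry near the arcs is delicate precisely because the thin planar strip around $\gamma_j$ maps under $F_1$ into a small complex disc tangent to $\partial\Omega$ along a totally real curve, so $F_1(\partial\Delta)$ there will a priori lie strictly inside $\Omega$ to second order rather than near $\partial\Omega$, and pushing it back onto $\partial\Omega$ requires a controlled solution of a hypersurface-attachment problem, not the Lemma~17--18 mechanism. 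The paper's construction sidesteps this entirely by working with the level set $\{\prod\mathcal{F}_i=\eta\}$, whose intersection with $\overline\Omega$ automatically has boundary on $\partial\Omega$, and invoking Orevkov to certify that the intersection is a single analytic disc.
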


The Lemma seems to be folklore but we have no direct reference.
After the proof of the Corollaries we will sketch the proof.

The lemma allows to find a homotopy of $F_1$ to a mapping $F_2:S
\rightarrow \Omega _{\varepsilon}$ such that for a simply connected
domain $\Delta _2 \subset \Delta$ the restriction $F_2|\Delta _2$ is
an analytic disc with boundary in $\partial \Omega $ and the set
$F(S \setminus \Delta_2)$ is contained in $\Omega _{\varepsilon}
\setminus \Omega$ . Composing $F_2$ with the retraction
$\mathfrak{I}$ finishes the proof. \hfill $\square$

\medskip

It remains to sketch the proof of Lemma 23. Notice that the Lemma
was used in the example in section 1 and also implies the following
fact. The boundary of the disc of Corollary 5 which represents an
element of the fundamental group of $\partial \Omega$ can be chosen
to pass through a given base point $p \in \partial \Omega$.

\medskip

\noindent {\it Sketch of the proof of Lemma 23.} Notice that after
approximating we may assume that for each analytic disc $f(\overline
\delta_j)$ contained in $f(E)$ the mapping $f$ extends to an
analytic immersion of a larger disc $\delta '\supset \overline
\delta$ to a neighbourhood of $\overline \Omega$ in $X^2$ (keeping
the condition $f(\partial \delta) \subset \partial \Omega$).
Consider a small connected neighbourhood $V$ of $f(E \setminus Int
\,E)$. (The set $f(E \setminus Int \,E)$ is the union of the
boundaries of the analytic discs contained in $f(E)$ and the
Legendrian arcs. Notice that $f(E \setminus Int \,E) \subset
\partial\Omega$.) With each of the analytic discs $f_i: \delta _i '
\rightarrow X^2$ we associate (as in section 3) a Riemann domain
$\mathcal{R}_i$ over $X^2$ (biholomorphic to $\delta _i ' \times
\varepsilon _i \D$ for some $\varepsilon _i >0$)  to which the disc
lifts as an embedded disc. Consider the disjoint union of the
Riemann domains $\mathcal{R}_i$ and glue each $\mathcal{R}_i$ in a
natural way to $V$ along a neighbourhood of the respective circle
$f(\partial \delta _i)$. Shrinking the Riemann domains and the
domain $V$ suitably we obtain a (strictly) pseudoconvex Riemann
domain $\mathcal{R}$ over $X^2$ which is diffeomorphic to a ball
(see \cite{Sh} where the method of gluing tubular neighbourhoods of
arcs to strictly pseudoconvex domains to obtain strictly
pseudoconvex domains appeared first).

Denote by $M$ the lift of $V \cap \partial \Omega$ to $\mathcal{R}$.
$M$ is a relatively closed hypersurface in $\mathcal{R}$ which is
strictly pseudoconvex from one side. The lifts to $\mathcal{R}$ of
the analytic discs contained in $f(E)$ extend to embedded relatively
closed analytic discs in $\mathcal{R}$, denoted by $F_i(\overline
\D)$. Denote the lifts of the arcs in $f(E)$ by $\gamma _i $. The
$\gamma_i$ are Legendrian arcs in $M$. To each $\gamma_i$ we
associate a chain of small analytic discs $g_k: \overline\D
\rightarrow \mathcal{R},\; k=1,...,N,$ so that $g_k(\partial \D)
\subset M$, $g_1(-1)$ is an endpoint of $\gamma _i$,
$g_k(1)=g_{k+1}(-1), \; k=1, ...,N-1,$ and $g_N(1)$ is the other
endpoint of $\gamma_i$. The discs may be taken to be intersections
with the pseudoconvex side of $M$ of complex lines in suitable
coordinates. By further shrinking the Riemann domain we assume that
these discs extend to relatively closed embedded analytic discs in
$\mathcal{R}$ which meet transversally and do not meet the
$F_i(\overline \D)$ except at $g_1(-1)$ and possibly $g_N(1)$. We
may assume that the latter intersections are also transversal. We
obtained a finite collection of relatively closed discs in
$\mathcal{R}$. Since $\mathcal{R}$ is diffeomorphic to a ball, each
disc is the zero set $\{\mathcal{F}_i =0\}$ of an analytic function
$\mathcal{F}_i$ on $\mathcal{R}$. For a generic choice of a small
number $\eta$ the set $X_{\eta} \defined \{\prod \mathcal{F}_i =
\eta\} \cap \overline \Omega$ is an analytic disc (see, e.g.
\cite{Or}, Lemma 3.7). If $\gamma _i$ is an arc with the second
endpoint not contained in the boundary of any of the analytic discs
$F_i(\partial \D)$ we may adjust the choice of the last small disc
$g_N$ and the number $\eta$ so that the boundary of the disc
$X_{\eta}$ passes through the endpoint of $\gamma _i$.

The lemma is proved. \hfill $\square$


\begin{thebibliography}{9}



\bibitem{DoGr}
F.~ Docquier, H.~ Grauert, \emph{Levisches Problem und Rungescher
Satz f\"ur Teilgebiete Steinscher Mannigfaltigkeiten}, Math.Ann.
\textbf{140} (1960), 94-123.


\bibitem{FoZa}
J.-E.~Fornaess, W.R. ~Zame, \emph{Riemann domains and envelopes of
holomorphy}, Duke Math.J. \textbf{50} (1983), 273-283.

\bibitem{EGr}
Y.~ Eliashberg, M.~ Gromov, \emph{Embeddings of Stein manifolds of
dimension $n$ into the affine space of dimension $3n/2+1$},  Ann. of
Math. textbf{136}  (1992), 123-135.

\bibitem{Fo}
F.~ Fortstneric, \emph{Holomorphic flexibility properties of complex
manifolds}, Amer. J. Math. \textbf{128} (2006), 239-270.



\bibitem{FoGl}
F.~ Forstneric, J.~ Globevnik, \emph{Discs in pseudoconvex domains},
Comment.Math.Helvetici \textbf{67} (1992), 129-145.

\bibitem{FrGr}
K.~ Fritsche, H.~ Grauert, \emph{From holomorphic functions to
complex manifolds}, Graduate Texts in Mathematics,\textbf{213},
Springer, New York, Berlin, Heidelberg, 2002.

\bibitem{Gei}
H.~ Geiges, \emph{Contact Geometry}, Handbook of differential
geometry. Vol. \textbf{II}, 315-382, Elsevier/North-Holland,
Amsterdam, 2006.


\bibitem{Grau}
H.~ Grauert, \emph{Charakterisierung der holomorph vollst\"andigen
komplexen R\"aume}, Math.Ann. \textbf{129} (1955),
  233-259.

\bibitem{IvSh}
S.M.~ Ivashkovich, V.V.~ Shevchishin, \emph{Deformations of
non-compact complex curves and envelopes of meromorphy of spheres},
Sbornik:Mathematics \textbf{189:9} (1998), 1295-1333.



\bibitem{H}
L.~ H\"ormander, \emph{An Introduction to Complec Analysis in
Several Variables}, Third Edition (revised), North Holland Math.
Library, Amsterdam, New York, Oxford, Tokio, (1990).



\bibitem{Ju}
M.~ Jurchescu, \emph{On a theorem of Stoilow}, Math. Ann.,
\textbf{138} (1959), 332-334.

\bibitem{KaZa}
Sh.~ Kaliman, M.~ Zaidenberg \emph{A transversality theorem for
holomorphic mappings and stability of Eisenman-Kobayashi measures},
Trans.Amer.Math.Soc. \textbf{348}, no.~2,(1996),
  1-12.




\bibitem{Ke}
H.~ Kerner, \emph{\"Uberlagerungen und Holomorphieh\"ullen},
Math.Ann. \textbf{144} (1961),
  126-134.

\bibitem{Ma}
B.~ Malgrange, \emph{Lectures on the theory of functions of several
complex variables}, Tata Institute of fundamenbtal research, Bombay
(1958) (Reissued 1965).

\bibitem{Oka}
K.~ Oka, \emph{Sur les fonctions analytiques de plusieurs variables.
IX. - Domains finis sans point critique interieur}, Japanese J.
Math. \textbf{23} (1953), 87-155.

\bibitem{Or}
S.~ Orevkov, \emph{An algebraic curve in the unit ball in
$\mathbf{C}^ 2$ that passes through the origin and all of whose
boundary components are arbitrarily short} (Russian), Tr. Mat. Inst.
Steklova \textbf{253} (2006), 135-157.

\bibitem{OSt}
B.~ Ozbagci, A.~ Stipsicz, \emph{Surgery on contact $3$-manifolds
and Stein surfaces}, Bolyai Society Mathematical Society and
Springer (2004).


\bibitem{Rossi}
H.~ Rossi, \emph{On Envelopes of Holomorphy}, Comm. Pur Appl. Math.
\textbf{XVI} (1963), 9 -17.


\bibitem{Ru}
W.~ Rudin, \emph{Real and complex analysis}, Third Edition, McGraw-Hill Book Company, New York etc (1987).

\bibitem{Roy}
H.L.~ Royden, \emph{One-dimensional cohomology in domains of
holomorphy}, Ann. Math. \textbf{78} (1963), 197-200.

\bibitem{Sh}
N.~ Shcherbina, \emph{Decomposition of a common boundary of two
domains of holomorphy into analytic curves} (Russian),  Izv. Akad.
Nauk SSSR Ser. Mat.,\textbf{46} (1982),  1106-1123, 1136.


\bibitem{Sto}
E.L.~ Stout, \emph{A domain whose envelope of holomorphy is not a
domain}, Ann. Polon. Math. \textbf{89} (2006), 197-201.



\end{thebibliography}
\end{document}